\newtheorem{theorem}{Theorem}[section]
\newtheorem{proposition}[theorem]{Proposition}
\newtheorem{lemma}[theorem]{Lemma}
\newtheorem{definition}[theorem]{Definition}
\newtheorem{remark}[theorem]{Remark}
\newtheorem{assumption}[theorem]{Assumption}
\newcommand{\tlangle}{\text{\ensuremath{\langle\hspace*{-0.5ex}\langle}}}
\newcommand{\trangle}{\text{\ensuremath{\rangle\hspace*{-0.5ex}\rangle}}}
\begin{document}

\title[Integration and differential equations for typical paths]{Stochastic integration and differential equations for typical paths}
\author{Daniel Bartl$^\ast$
\and Michael Kupper$^\times$
\and Ariel Neufeld$^+$} 
\thanks{
	$^\ast$Department of Mathematics, University of Vienna, daniel.bartl@univie.ac.at. \\
  $\phantom{ddd}$$^\times$Department of Mathematics, University of Konstanz, kupper@uni-konstanz.de.\\ 
  $\phantom{ddd}$$^+$Division of Mathematical Sciences, NTU Singapore,
  ariel.neufeld@ntu.edu.sg.
}
\keywords{F\"ollmer integration; Pathwise Stochastic Intergral; Pathwise SDE;\\ 
	$\phantom{ddd}$Infinite
	dimensional stochastic calculus; Vovk's outer measure} 
\date{\today} 
\subjclass[2010]{60H05, 
60H10; 
60H15; 
 91G20  
}

\begin{abstract} 
The goal of this paper is to define stochastic integrals and to solve stochastic differential equations for typical paths taking values in a possibly infinite dimensional separable Hilbert space without imposing any probabilistic structure.
 In the spirit of \cite{PerkowskiPromel16,Vovk12} and motivated by the pricing duality result obtained in \cite{bartl2017pathwise} we introduce an outer measure 
as a variant of the pathwise minimal superhedging price where agents are allowed to trade not only in $\omega$ but also in $\int\omega\,d\omega:=\omega^2 -\langle \omega \rangle$ and where they are allowed to include beliefs in future paths of the price process expressed by a prediction set. We then call a property to hold true on typical paths if the set of paths where the property fails is null with respect to our outer measure.
 It turns out that adding the second term $\omega^2 -\langle \omega \rangle$ in the definition of the outer measure enables to directly construct stochastic integrals which are continuous, even for typical paths taking values in an infinite dimensional separable Hilbert space. Moreover, when restricting to continuous paths whose quadratic variation is absolutely continuous with uniformly bounded derivative, a second construction of  model-free stochastic integrals for typical paths is presented, which then allows to solve in a model-free way stochastic differential equations for typical paths.
\end{abstract}

\maketitle
\setcounter{equation}{0}

\section{Introduction}
{In this paper we investigate the problem of 
constructing pathwise stochastic integrals 
as well as solutions of
 stochastic differential equations without a reference probability measure.
 It is well-known that defining a stochastic integral is a highly non-trivial problem and cannot be deduced directly from classical measure-theoretical calculus, as in general, stochastic processes describing the noise of the dynamics do not have finite variation paths. }
The It\^o integral and the corresponding It\^o calculus have been developed  overcoming the obstacle of how to define integrals and differential equations when noise occurs. However, its construction heavily depends on a probabilistic structure and cannot be defined pathwise. More precisely, the construction of the stochastic integral is accomplished by a $L^2(P)$-limit procedure, and from the Bichteler-Dellacherie theorem it is known that the only class of good integrators for which the integral is, in a suitable sense, a continuous operator, are semimartingales. 

Later, there were several approaches to define stochastic integrals pathwise, without assuming any probabilistic structure. This allows to consider more general paths as integrators, rather than semimartingale paths. Moreover, motivated from mathematical finance, pathwise stochastic calculus can be employed to price financial derivatives without assuming any probabilistic model on the financial market leading to robust prices; see \cite{AcciaioBeiglbockPenknerSchachermayer.16,bartl2017pathwise,bartl2017duality,beiglbock2017pathwise,BurzoniFrittelliMaggis.16,DolinskySoner.12} to name but a few.
{The first result which provides a construction of a stochastic integral without imposing any probabilistic structure was given in F\"ollmer \cite{Follmer81}.
In Bichteler~\cite{Bichteler81} and Karandikar~\cite{Karandikar95}  a pathwise construction of the stochastic integral was proposed for c\`adl\`ag integrands which enables to solve the so-called aggregation problem of defining  a stochastic integral which coincides with the classical stochastic integral simultaneously for all semimartingale measures. This has been used to price financial derivatives under Knightian uncertainty, see \cite{NeufeldNutz13,NutzSoner12,SonerTouziZhang13}. A solution for the above aggregation problem, under the continuum hypothesis, was obtained in Nutz~\cite{Nutz12}  for general predictable integrands using medial limits. 
In Lyons~\cite{Lyons95} F\"ollmer's pathwise stochastic calculus has been extended to obtain prices of American and European options under volatility uncertainty. In Davis--Obloj--Raval~\cite{DavisOblojRaval14} F\"ollmer's pathwise stochastic calculus has been employed to price weighted variance swaps when a finite number of European call and put options for a known price are traded.
%
%
%
%
%
In  Cont--Fourni\'e~\cite{ContFournie10} pathwise stochastic integrals with a directional derivative (whose construction goes back to Dupire~\cite{Dupire09}) of a non-anticipative functional as integrand have been constructed and a change of variable formula for such integrals was obtained. Using this framework, an It\^o isometry for such integrals was established in Ananova--Cont~\cite{AnanovaCont16}, whereas in Riga~\cite{Riga16} a pathwise notion for the gain process with respect to corresponding self-financing trading strategies was introduced. 
In Cont--Perkowski~\cite{cont2018pathwise} it was shown that one can extend F\"ollmer's pathwise It\^o calculus to paths
with arbitrary regularity by employing the concept of $p$-th variation
along a sequence of time partitions.
Moreover, during the reviewing process of
this work, the existence and uniqueness of pathwise SDEs has been established in Galane--Lochowski--Mhlanga~\cite{galane2018sdes} using the similar approach of deriving a model-free version of the Burkholder--Davis--Gundy inequality.
Furthermore, for pathwise construction of stochastic integrals with respect to c\`adl\`ag integrators, we refer to Hirai~\cite{Hirai17} as well as Lochowski--Perkowski--Pr\"omel~ \cite{LochowskiPerkowskiPromel17}.}

Recently, motivated by game-theoretic considerations, Vovk introduced in \cite{Vovk12} an outer measure on the space of continuous paths and declared an event to be typical if its complement is null with respect to the defined outer measure. He then showed that typical paths possess a quadratic variation. In other words, it was shown in \cite{Vovk12} that paths which do not possess a quadratic variation allow a form of arbitrage.
 Vovk's approach was employed in Perkowski--Pr\"omel~\cite{PerkowskiPromel16} to define an outer measure which can be interpreted as the pathwise minimal superhedging price motivated from financial mathematics.  Using their outer measure they constructed a model-free stochastic integral which is continuous for typical price paths and connected their typical paths with rough paths by demonstrating that every typical price path possess an It\^o rough path. Moreover, Vovk~\cite{Vovk15purely} and Vovk--Shafer~\cite{VovkShafer17} provide several additional constructions of model-free stochastic integrals for typical paths. In \cite{peng2007g,peng2010nonlinear} It\^o calculus with respect to the so-called $G$-Brownian motion as integrator has been developed by Peng, which is motivated from financial mathematics when investigating pricing and portfolio optimization problems under volatility uncertainty. There, by referring to the notion of typical paths, the pathwise integral and the corresponding stochastic calculus is defined for typical paths with respect to the so-called $G$-expectation. 
 
{The goal of this work is to provide a construction of a model-free stochastic integral for typical paths which allows to solve stochastic differential equations pathwise. 
	Our setting is similar to the one in Perkowski--Pr\"omel~\cite{PerkowskiPromel16}. More precisely, we introduce an outer measure which is defined as a variant of the pathwise minimal superhedging price and call a property to hold true on typical paths if the set of paths where the property fails is null with respect to our outer measure. The main difference, compared to the outer measure in \cite{PerkowskiPromel16}, is that in our definition hedging is not only allowed in $\omega$ representing the price path of the risky security, but also  in the second security $\omega^2-\langle \omega \rangle$. This roughly means that superhedging strategies both in $\omega$ and $\int \omega \,d\omega$ are permitted.
It turns out that adding the second term in the definition of the outer measure enables to directly define stochastic integrals which are continuous, even for paths taking values in an infinite dimensional separable Hilbert space, see Theorem~\ref{thm:integral}. Its proof is based on an elementary, but crucial observation provided in Lemma~\ref{lem:integrals.squared} using heavily the second order term in the definition of the outer measure, which is then employed to derive a Burkholder--Davis--Gundy (BDG) type of inequality, see Proposition~\ref{prop:bdg.calE}.  
To be able to solve stochastic differential equations pathwise, a second construction of model-free stochastic integrals is provided 
when restricting to all paths possessing an absolutely continuous quadratic variation whose derivative is uniformly bounded, see Theorem~\ref{thm:integral.Xi.has.assumptions}. This notion of a model-free stochastic integral allows us to solve stochastic differential equations for typical paths taking values in a possibly infinite dimensional Hilbert space, see Theorem~\ref{thm:sde}.}

The remainder of this paper is organized as follows. In Section~\ref{sec:SetupMainResults}, we introduce the setup
and state our main results, 
 whose proofs are then provided in Section~ \ref{sec:Proof}.

\section{Setup and main results}
\label{sec:SetupMainResults}
Let $H$ be a separable Hilbert space with scalar product $\langle\cdot,\cdot\rangle_H$ and respective norm $\|h\|_H=\langle h, h\rangle_H^{1/2}$. For a finite time horizon $T>0$ we denote by $C([0,T],H)$ the space of all continuous paths $\omega\colon [0,T]\to H$ endowed with the supremum norm $\sup_{t\in[0,T]} \|\omega(t)\|_H$. Let $\Omega$ be the Borel set of all $\omega\in C([0,T],H)$ 
for which the \emph{pathwise quadratic variation} $\langle\omega\rangle$ given by
\[\langle\omega\rangle_t:= \lim_{m\to\infty} \sum_{k=1}^\infty 
\Big(\|\omega\big(\sigma^m_{k}(\omega)\wedge t\big)\|_H 
- \|\omega\big(\sigma^m_{k-1}(\omega)\wedge t\big)\|_H\Big)^2\]
exists as a limit in the supremum norm in $C([0,T],\mathbb{R})$ along the dyadic partition
\[\sigma^m_k(\omega):=\inf\left\{t\geq \sigma^m_{k-1}(\omega) : \|\omega(t)-\omega(\sigma^m_{k-1}(\omega))\|_H\ge 2^{-m}\right\}\]
with $\sigma^m_0(\omega)=0$ for all $k,m\in\mathbb{N}$. 
Define the processes $S\colon[0,T]\times\Omega\to H$ and $\mathbb{S}\colon[0,T]\times\Omega\to \mathbb{R}$ by
\[ S_t(\omega):=\omega(t)\qquad\text{ and }\qquad
\mathbb{S}_t(\omega):=\|S_t(\omega)\|_H^2 -\langle\omega\rangle_t,\]
and let $\mathbb{F}$ be the raw filtration on $\Omega$ given by $\mathcal{F}_t:=\sigma(S_s: s\leq t)$ for all $t\in[0,T]$ and $\mathbb{F}_+$ its right-continuous version. 

Given another separable Hilbert space $K$, 	denote by $L(H,K)$ the Banach space of all
bounded linear operators $F\colon H \to K$ endowed with the operator norm $\|F\|_{L(H,K)}:=\sup_{\|h\|_H\le 1}\|F(h)\|_K$. We denote by $\mathcal{H}_s(H,K)$ the set of all \emph{simple integrands}, i.e.~processes 
$F\colon [0,T]\times \Omega\to L(H,K)$ of the form 
\[F_t(\omega)=\sum_{n=0}^\infty f_n(\omega)1_{( \tau_n({\omega}),\tau_{n+1}({\omega})]}(t)\]
where $0=\tau_0\leq\dots \leq \tau_n\leq\tau_{n+1}\leq \dots\leq T$ are $\mathbb{F}_+$-stopping times such that for each $\omega$ there is $n(\omega)$ such that $\tau_{n(\omega)}(\omega)=T$ and the functions
$f_n\colon\Omega\to L(H,K)$ are $\mathcal{F}_{\tau_n+}$-measurable.
For such a simple integrand $F$ the stochastic integral 
$(F\cdot N)$ against any process $N\colon[0,T]\times\Omega\to H$
can be defined pathwise
\[ (F \cdot N)_t(\omega)
:=\sum_{n=0}^\infty f_n(\omega)\big(N^t_{\tau_{n+1}}(\omega)-N^t_{\tau_n}(\omega)\big)
\in K \]
where $N^t_s:=N_{s\wedge t}$. Notice that processes in
$\mathcal{H}_s(H,\mathbb{R})$ can be viewed as simple integrands with values in $H$ 
by identifying $L(H,\mathbb{R})$ with $H$. 


Our results strongly rely on the following modified version of Vovk's \cite{Vovk12} and Perkowski--Pr\"omel's \cite{PerkowskiPromel16} outer measure.
If not explicitly stated otherwise, 
all (in-)equalities between functions $X:\Omega\to[-\infty,+\infty]$ are understood pointwise on $\Omega$.

\begin{definition}
	\label{def:second-vovk}
	Let $\Xi\subseteq\Omega$ be a fixed prediction set. Then 
	for all $X\colon\Omega\to[0,+\infty]$ we define 
	\[ \mathcal{E}(X):=
	\inf\!\left\{ \lambda\geq 0 \,: 
	\begin{array}{l}
	\text{there are $(F^n)$ in $\mathcal{H}_s(H,\mathbb{R})$ and $(G^n)$ in $\mathcal{H}_s(\mathbb{R},\mathbb{R})$ such that}\\ 
	\text{$\lambda+(F^n\cdot S)_t+(G^n\cdot \mathbb{S})_t\geq 0$ on $\Xi$ for all $n$ and $t\in[0,T]$, and} \\
	\text{$\lambda+ \liminf_n\big( (F^n\cdot S)_T +(G^n\cdot \mathbb{S})_T \big)\geq X$ on $\Xi$ }
	\end{array}\!\right\}.
	\] 
	Moreover, we say that a property holds for \emph{typical paths} (on $\Xi$) if $\mathcal{E}(1_N)=0$ for 
	the set $N$ where the property fails. 
\end{definition}
{\begin{remark}\label{rem:finance}
	 Motivated by the work of Vovk \cite{Vovk12} and Perkowski--Pr\"omel~\cite{PerkowskiPromel16}, our outer measure  is defined as a variant of the pathwise minimal superhedging price. Here, the pathwise superhedging property only needs to hold with respect to a predefined prediction set of paths. Such a superhedging price, which can be seen as a second-order Vovk approach, was introduced in \cite{bartl2017pathwise} and enabled to provide  a pricing duality result when the financial agent is allowed to include beliefs in future
	paths of the price process expressed by a prediction set $\Xi$, while eliminating all those which are
	seen as impossible. This reduces the (robust) superhedging price, which typically leads to too high prices, see \cite{DolinskyNeufeld.16,Neufeld.17}. We refer to \cite{bartl2017duality,HouObloj.15,Mykland.03} for related works regarding prediction sets and its relation to pricing of financial derivatives.
\end{remark}}
{\begin{remark}\label{rem:Vovk-quad}
	We restrict ourselves to paths for which the quadratic variation exists to a priori be able to define our outer measure $\mathcal{E}(\cdot)$. 
	However, Vovk showed in \cite{Vovk12} that typical paths with respect to his outer measure automatically possess a quadratic variation. Comparing our outer measure with the one in \cite{Vovk12}, we can argue as in Perkowski--Pr\"omel~\cite[Lemma~2.9]{PerkowskiPromel16} that 
	our outer measure enforces our typical paths to possess a quadratic variation, meaning that in fact it was not a restriction to consider only paths with finite quadratic variation.
\end{remark}}
From now on we fix a prediction set $\Xi \subseteq \Omega$ and consider the outer measure $\mathcal{E}(\cdot)$ with respect to  $\Xi$. Further, we denote by $\mathcal{M}(\Xi)$ the set of \emph{martingale measures supported on $\Xi$}, i.e.~all Borel probability measures $P$ on $\Omega$ such that
$(S_t)$ is a $P$-$\mathbb{F}$-martingale and $P(\Xi)=1$.

The function $t\mapsto \langle\omega\rangle_t$ is continuous and nondecreasing for all $\omega\in\Omega$, thus induces a finite measure on $[0,T]$. 
Therefore, we denote 
\[( F \cdot \langle S\rangle)_t(\omega):= \int_0^t F_u(\omega)\, d\langle S(\omega)\rangle_u\]
the Lebesgue-Stieltjes integral  with respect to a function $F\colon[0,T]\times\Omega \to \mathbb{R}$ 
such that $F(\omega)$ is measurable and $\int_0^T|F_u(\omega)|\, d \langle S (\omega)\rangle_u < +\infty$ for all $\omega \in \Omega$ and set $( F \cdot \langle S\rangle)_t(\omega):=+\infty$ otherwise.

Now, we start with our first result stating that for any prediction set $\Xi\subseteq \Omega$ we can define for typical paths stochastic integrals which are continuous. To that end, for any $F\colon [0,T]\times \Omega \to L(H,K)$  we introduce
\begin{equation*}
\|F\|_{\mathcal{H}^\infty(H,K)}:=\sup_{\omega\in\Xi} (\|F\|^2_{L(H,K)} \cdot \langle S\rangle\big)_T^\frac{1}{2}(\omega)
\end{equation*}
and define the space of integrands
\[ \mathcal{H}^\infty(H,K)
:=\left\{ F\colon\Omega\times[0,T]\to L(H,K) : 
\begin{array}{l}
\|F\|_{\mathcal{H}^\infty(H,K)}<+\infty, \text{ and there exists}\\
\text{a sequence } (F^n) \text{ in } \mathcal{H}_s(H,K)\\
\text{such that } \|F-F^n\|_{\mathcal{H}^\infty(H,K)} \to 0
\end{array}\!\right\}.\]

\begin{theorem}
	\label{thm:integral}	
	Let $F \in \mathcal{H}^\infty(H,K)$ and assume that $K$ is finite dimensional. 
	Then the stochastic integral 
	\[(F\cdot S)\colon \Omega\to C([0,T],K) \]
	exists and satisfies the following weak Burkholder--Davis--Gundy (BDG) type of inequality
	\[\mathcal{E}\Big( \sup_{t\in[0,T]} \|(F\cdot S)_t\|_K^2 \Big)
	\leq 4 \sup_{\omega\in \Xi} (\|F\|_{L(H,K)}^2\cdot \langle S\rangle)_T(\omega)
	=4 \|F\|_{\mathcal{H}^\infty(H,K)}^2.\]
	Moreover, the space $\mathcal{H}^\infty(H,K)$ and the stochastic integral are linear (for typical paths)
	and the latter coincides with the classical It\^o-integral under every martingale measure
	$P\in\mathcal{M}(\Xi)$.
\end{theorem}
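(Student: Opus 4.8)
The plan is to construct the integral for a general $F \in \mathcal{H}^\infty(H,K)$ as a limit of integrals of simple integrands, using the weak BDG inequality to pass to the limit in a suitable sense for typical paths. First I would establish the key BDG estimate for \emph{simple} integrands. For $F \in \mathcal{H}_s(H,K)$ the integral $(F\cdot S)$ is defined pathwise as a continuous $K$-valued path, so the only real content is the inequality $\mathcal{E}(\sup_{t}\|(F\cdot S)_t\|_K^2) \le 4\sup_{\omega\in\Xi}(\|F\|_{L(H,K)}^2 \cdot \langle S\rangle)_T(\omega)$. Since $K$ is finite dimensional, I would fix an orthonormal basis $e_1,\dots,e_d$ of $K$ and reduce to the scalar-valued coordinate integrals $(\langle F, e_i\rangle \cdot S)$, each of which is an integral of an $H$-valued (equivalently $L(H,\mathbb{R})$-valued) simple integrand against $S$; the sum over the $d$ coordinates of the individual BDG bounds, together with $\sum_i \|F^{(i)}\|_{L(H,\mathbb{R})}^2 = \|F\|_{L(H,K)}^2$, recovers the stated constant. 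This is precisely where I expect to invoke Proposition~\ref{prop:bdg.calE} (the scalar BDG inequality) and, underlying it, Lemma~\ref{lem:integrals.squared}, whose whole purpose is to exploit the second integrator $\mathbb{S}=\|S\|_H^2-\langle\omega\rangle$ to control the square of the integral; the factor $4$ and the appearance of $\langle S\rangle$ strongly suggest the estimate is built coordinatewise from the scalar result.

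Next I would extend the integral from $\mathcal{H}_s(H,K)$ to all of $\mathcal{H}^\infty(H,K)$ by a Cauchy argument in the $\mathcal{E}$-semimetric. By definition of $\mathcal{H}^\infty$, choose simple $F^n$ with $\|F-F^n\|_{\mathcal{H}^\infty(H,K)}\to 0$; then $(F^n)$ is Cauchy in this norm, and applying the linearity of the simple integral together with the BDG bound to the differences gives
\[
\mathcal{E}\Big(\sup_{t\in[0,T]}\|(F^n\cdot S)_t - (F^m\cdot S)_t\|_K^2\Big)
\le 4\|F^n-F^m\|_{\mathcal{H}^\infty(H,K)}^2 \to 0.
\]
A standard subsequence argument—passing to $(F^{n_k})$ with $\sum_k \|F^{n_{k+1}}-F^{n_k}\|_{\mathcal{H}^\infty(H,K)} < \infty$—then yields, via the Borel--Cantelli-type property of $\mathcal{E}$ (subadditivity of the outer measure), that $\big((F^{n_k}\cdot S)\big)_k$ converges uniformly in $t$ for typical paths $\omega\in\Xi$. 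Since each $(F^{n_k}\cdot S)(\omega)$ lies in the Banach space $C([0,T],K)$, the uniform limit defines a continuous path, and I would set $(F\cdot S):=\lim_k (F^{n_k}\cdot S)$ for typical paths. Independence of the limit from the approximating sequence follows by interlacing two approximating sequences and applying the same BDG estimate, so $(F\cdot S)$ is well defined up to a typical-path null set.

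It then remains to verify the three assertions in the final sentence. The BDG inequality for the limit $F$ follows by lower semicontinuity: writing $(F\cdot S) = (F^{n_k}\cdot S) + ((F-F^{n_k})\cdot S)$ and using that $\mathcal{E}$ is countably subadditive and monotone, I would pass to the limit in the BDG bound for the $F^{n_k}$, using $\|F^{n_k}\|_{\mathcal{H}^\infty}\to\|F\|_{\mathcal{H}^\infty}$. Linearity for typical paths is inherited from the evident linearity of the simple integral, since the limiting construction is linear in the approximating sequences and the exceptional null sets combine subadditively. Finally, for the comparison with the Itô integral, I would fix $P\in\mathcal{M}(\Xi)$; under $P$ the process $S$ is a martingale, $\langle\omega\rangle$ coincides $P$-a.s.\ with the $P$-quadratic variation of $S$, and the classical $L^2(P)$-BDG inequality shows that the same approximating sequence $(F^n)$ converges in $L^2(P)$ to the classical Itô integral of $F$. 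Since the typical-path limit and the $L^2(P)$-limit agree along the simple integrals and any typical-path statement holds $P$-a.s.\ (as $P(\Xi)=1$ and $\mathcal{E}$-null sets are $P$-null for every $P\in\mathcal{M}(\Xi)$), the two integrals coincide $P$-a.s.

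\medskip

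The main obstacle I anticipate is the passage to the limit in infinite-dimensional $H$: the uniform-in-$t$ convergence must be genuinely established \emph{pathwise on $\Xi$} rather than merely in an $L^2$ sense, which is exactly why one needs the $\mathcal{E}$-based BDG inequality rather than a classical martingale estimate, and why the completeness of $C([0,T],K)$ under the supremum norm is essential for the limit to again be a continuous path. The finite-dimensionality of $K$ is used to reduce the BDG bound to the scalar case with the correct constant; the potential infinite-dimensionality of $H$ is absorbed into the operator norm $\|F\|_{L(H,K)}$ and never requires a basis expansion on the $H$-side, which is the structural reason the construction survives into infinite dimensions.
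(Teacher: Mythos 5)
Your overall architecture is exactly the paper's: the weak BDG inequality for simple integrands (Proposition~\ref{prop:bdg.calE}, built on the pathwise Doob-type inequality of Lemma~\ref{lem:bgd.path} and the second-order term via Lemma~\ref{lem:integrals.squared}), then a Cauchy argument in the seminorm $\mathcal{E}(\sup_t\|\cdot\|_K^2)^{1/2}$ with completeness of $B=C([0,T],K)$ (Proposition~\ref{prop:L2-Banach-Banach}), extension of the BDG bound and well-posedness by the triangle inequality, and coincidence with the It\^o integral via the $L^2(P)$-limit construction together with the weak duality $E_P[X]\le\mathcal{E}(X)$ of Lemma~\ref{lem:weak.duality}. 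All of that matches.

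There is, however, one concrete error in your reduction to coordinates: the identity $\sum_{i=1}^d\|F^{(i)}\|_{L(H,\mathbb{R})}^2=\|F\|_{L(H,K)}^2$ is false. Since $\|F^{(i)}\|_{L(H,\mathbb{R})}=\|F^*e_i\|_H$, the left-hand side is the squared \emph{Hilbert--Schmidt} norm of $F$, which dominates the squared operator norm and can exceed it by a factor of $d$ (take $F=\mathrm{id}$ on $H=K=\mathbb{R}^d$: operator norm $1$, Hilbert--Schmidt norm $\sqrt{d}$). Consequently, summing the scalar BDG bounds coordinatewise and using countable subadditivity of $\mathcal{E}$ only yields $\mathcal{E}(\sup_t\|(F\cdot S)_t\|_K^2)\le 4\sup_{\omega\in\Xi}\big(\|F\|_{HS}^2\cdot\langle S\rangle\big)_T(\omega)\le 4d\,\|F\|_{\mathcal{H}^\infty(H,K)}^2$. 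This is enough to run your Cauchy/limit construction (the two norms are equivalent for fixed $d$), but it does not prove the inequality as stated, with constant $4$ and the operator norm. The fix — which is what the paper's proof of Proposition~\ref{prop:bdg.calE} actually does — is to keep the coordinatewise step purely pathwise and defer the isometry to the vector level: apply Lemma~\ref{lem:bgd.path} to each coordinate along a common refining partition $(\sigma^m_n)$ to get $\max_n (F^i\cdot S^t)_{\sigma^m_n}^2\le 4(F^i\cdot S)_t^2+(\tilde{H}^{i,m}\cdot S)_t$, sum over $i$ so that $\max_n\|(F\cdot S^t)_{\sigma^m_n}\|_K^2\le 4\|(F\cdot S)_t\|_K^2+(\bar{H}^m\cdot S)_t$, and then bound $\|(F\cdot S)_t\|_K^2$ by Lemma~\ref{lem:integrals.squared} applied directly to the $K$-valued integrand, which carries the operator norm because $\|f_n(S^t_{\tau_{n+1}}-S^t_{\tau_n})\|_K\le\|f_n\|_{L(H,K)}\|S^t_{\tau_{n+1}}-S^t_{\tau_n}\|_H$. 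This produces the admissible pair $H^m=4\tilde{F}+\bar{H}^m$, $G=4\|F\|^2_{L(H,K)}$ with $\lambda=4\sup_{\omega\in\Xi}(\|F\|^2_{L(H,K)}\cdot\langle S\rangle)_T(\omega)$, i.e.~exactly the claimed constant; with this correction the remainder of your proposal goes through as in the paper.
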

{
\begin{remark}\label{rem-integrand-infty}
	Note that the set of integrands $\mathcal{H}^\infty(H,K)$ is large for natural choices of prediction sets. Indeed, if, e.g., $\Xi\subseteq\{\omega\in\Omega : \langle\omega\rangle_T\leq c\}$ for some constant $c>0$, then clearly $\|F\|_{\mathcal{H}^\infty(H,K)}\leq c^{\frac{1}{2}} \sup_{t\in[0,T],\omega\in\Xi} \|F_t(\omega)\|_{L(H,K)}^2$.
	In particular, as every bounded, adapted, and c\`adl\`ag function $F\colon [0,T]\times \Omega\to L(H,K)$ can be approximated uniformly by simple integrands, it then follows that  $F\in \mathcal{H}^\infty(H,K)$.
	
	Moreover, if we 
	require $\Xi\subseteq \Xi_c$, where
	$\Xi_c\subseteq \Omega$  is the prediction set for which there exists a constant $c\geq0$ such that
	\begin{align}
	\label{Xi-c}
	\Xi_c=\bigg\{\omega\in C([0,T],H) : 
	\begin{array}{l}
	\omega \text{ is H\"older continuous and }\langle \omega\rangle \text{ is}\\
	\text{absolutely continuous with }d\langle \omega\rangle/dt \leq c
	\end{array}
	\!\bigg\},	
	\end{align}
	then $\|F\|_{\mathcal{H}^\infty(H,K)}^2\leq  \sup_{\omega\in\Xi_c} (c\int_0^T \|F_t(\omega)\|_{L(H,K)}^2\,dt)^{\frac{1}{2}}$.
	In particular, 
	$\mathcal{H}^\infty(H,K)$ contains for instance all deterministic $L^2(dt)$-Borel functions $F$.
\end{remark}}
\begin{remark}
	If $K$ is a general (not finite dimensional) Hilbert space,
	then the stochastic integral $(F\cdot S)$ exists for every $F\in\mathcal{H}^\infty(H,K)$. However, 
	it remains open whether it has a continuous modification. We refer to Remark~\ref{rem:finite-dim2} for further details.
	\end{remark}


\begin{remark}
	Throughout this paper we work with the real-valued quadratic variation
	$\langle S\rangle$ of the $H$-valued processes $S$.
	However, for $K=\mathbb{R}$ one can instead consider the tensor-valued process 
	$\tlangle S\trangle$ defined by
	\[\tlangle\omega\trangle_t:= \lim_{m\to\infty} \sum_{k=1}^\infty 
	\Big(\omega\big(\sigma^m_{k}(\omega)\wedge t\big) 
	-\omega\big(\sigma^m_{k-1}(\omega)\wedge t\big)\Big)\otimes \Big(\omega\big(\sigma^m_{k}(\omega)\wedge t\big) 
	-\omega\big(\sigma^m_{k-1}(\omega)\wedge t\big)\!\Big)\]
	where $\sigma^m_k(\omega):=\inf\left\{t\geq \sigma^m_{k-1}(\omega) : \|\omega(t)-\omega(\sigma^m_{k-1}(\omega))\|_H \ge 2^{-m}\right\}$ with $\sigma^m_0(\omega)=0$ for all $k,m\in\mathbb{N}$, and where $\otimes$ denotes the tensor product.
	Then the processes $\tlangle S \trangle$ and 
	$\mathbb{S}:=S\otimes S - \tlangle S \trangle$ take values in the 
	tensor space $H\otimes H$.
	In this setting, $\mathcal{E}(\cdot)$ can be defined as before with the difference that 
	the integrands $G^n$ are elements of $\mathcal{H}_s(H\otimes H,\mathbb{R})$.
	In the weak BDG inequality of Theorem \ref{thm:integral}, the term
	$(\|F\|_{L(H,\mathbb{R})}^2\cdot \langle S\rangle)$ has to be
	replaced by ``$(F\otimes F \cdot \tlangle S\trangle)$'',
	see e.g.~\cite[Chapter 20]{metivier1982semimartingales} for more details
	on tensor quadratic variation.
	Note that in case $H=\mathbb{R}^d$ it holds $H\otimes H=\mathbb{R}^{d\times d}$,
	the process 
	$\tlangle S \trangle$ is the symmetric matrix containing the pairwise covariation of all components of $S$, and 
	$(F\otimes F \cdot \tlangle S\trangle)=\sum_{i,j} ( F^iF^j\cdot \langle S^i,S^j\rangle)$.

	Replacing $\langle S\rangle$ by $\tlangle S\trangle$ 
	might be of interest for the following reason:
	The prediction set $\Xi$ may include different predictions for the 
	quadratic variation and covariation of different components of $S$. 
	While this is ignored in
	$\sup_{\omega\in\Xi}(\|F\|_{L(H,K)}^2\cdot \langle S\rangle)_T(\omega)$,
	it is incorporated in 
	$\sup_{\omega\in\Xi}(F\otimes F \cdot \tlangle S \trangle)_T(\omega)$, and the integral $(F\cdot S)$ can potentially be defined for a
	larger space of integrands $F$. 
\end{remark}
{
\begin{remark}\label{rem:rough-path}
	Note that the process $\mathbb{S}$ used in Definition~\ref{def:second-vovk} to define our stochastic integral also appears in rough path theory. 
	Indeed, the iterated integral $\frac{1}{2}(S\otimes S - \tlangle S \trangle)$ is a reduced rough path which allows to define pathwise integrals of gradient 1-forms, see \cite[Section~5]{FrizHairer2014}.
	Nevertheless, the construction of the corresponding integrals are different. The limit procedure with respect to the outer measure allows us to obtain  a larger class of integrands, see Remark~\ref{rem-integrand-infty}.
	However, while our integrals are only defined for typical paths, rough path theory allows to construct integrals pathwise for regular enough integrands.
\end{remark}}
To be able to not only define stochastic integrals for typical paths, but also solve  stochastic differential equations, we need to control the quadratic variation of typical paths.
{To that end, for the rest of this section, we work with the particular prediction set $\Xi_c\subseteq \Omega$
	defined in \eqref{Xi-c}.
}
For any $F\colon [0,T]\times \Omega  \mapsto L(H,K)$, set
\begin{equation*}
\|F\|_{\mathcal{H}^2(H,K)}:=\bigg(\int_0^T\mathcal{E}\big(\|F_t\|_{L(H,K)}^2\big)\,dt\bigg)^{\frac{1}{2}}.
\end{equation*}
For the prediction set $\Xi_c$ 
it turns out that stochastic integrals can be defined 
for integrands lying in the set
\[ \mathcal{H}^2(H,K)
:=\left\{ F\colon\Omega\times[0,T]\to L(H,K) : 
\begin{array}{l}
\|F\|_{\mathcal{H}^2(H,K)}<+\infty, \text{ and there exists a}\\
\text{sequence } (F^n) \text{ in } \mathcal{H}_{s,c}(H,K)\text{ such that}\\
\|F-F^n\|_{\mathcal{H}^2(H,K)} \to 0
\end{array}\!\right\},\]
where $\mathcal{H}_{s,c}(H,K)$ is the set of those $\sum_{n=0}^\infty f_n1_{( \tau_n,\tau_{n+1}]}\in\mathcal{H}_s(H,K)$ such that the stopping times $(\tau_n)$ are deterministic and 
$f_n\colon\Omega\to L(H,K)$ is continuous for each $n$.

Note that $\mathcal{H}^2(H,K)$ is a linear space. More precisely, the following result holds true.

\begin{theorem}
\label{thm:integral.Xi.has.assumptions}
{Let  $\Xi\equiv\Xi_c$ be the prediction set defined in \eqref{Xi-c}}
	and let $F \in \mathcal{H}^2(H,K)$.	Then the stochastic integral 
	\[(F\cdot S)\colon \Omega\to C([0,T],K) \]
	exists and satisfies the following weak BDG-type inequality
	\[\mathcal{E}\Big( \sup_{t\in[0,T]} \|(F\cdot S)_t\|_K^2 \Big)
	\leq 4c \int_0^T \mathcal{E}(\|F_t\|_{L(H,K)}^2)dt
	=4c \|F\|_{\mathcal{H}^2(H,K)}^2.\]
	Moreover, it coincides with the classical stochastic integral under every martingale measure
	$P\in\mathcal{M}(\Xi_c)$.
	In addition, if $f\colon [0,T]\times K\to L(H,K)$ is Lipschitz continuous
  then the map $\Omega \times [0, T ]\ni (\omega,t)\mapsto f (t, (F \cdot S)_t (\omega)) 
  \in L(H, K)$ is an element of $\mathcal{H}^2(H,K)$.
\end{theorem}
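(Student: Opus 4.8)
The plan is to prove every assertion first for simple integrands in $\mathcal{H}_{s,c}(H,K)$ and then to pass to the limit through the density built into the definition of $\mathcal{H}^2(H,K)$. For $F=\sum_n f_n 1_{(\tau_n,\tau_{n+1}]}\in\mathcal{H}_{s,c}(H,K)$ the integral $(F\cdot S)$ is, for each $\omega$, a finite sum of products of the $\omega$-continuous coefficients $f_n$ with increments of $S$, so it is automatically continuous and adapted. The first step I would carry out is the weak BDG inequality for such $F$: starting from the squared-integral identity of Lemma~\ref{lem:integrals.squared} and the maximal estimate of Proposition~\ref{prop:bdg.calE} in its $\mathcal{E}$-form, one gets $\mathcal{E}(\sup_t\|(F\cdot S)_t\|_K^2)\le 4\,\mathcal{E}\big((\|F\|_{L(H,K)}^2\cdot\langle S\rangle)_T\big)$. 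Since every $\omega\in\Xi_c$ satisfies $d\langle\omega\rangle/dt\le c$, monotonicity of $\mathcal{E}$ gives $(\|F\|^2\cdot\langle S\rangle)_T\le c\int_0^T\|F_t\|_{L(H,K)}^2\,dt$ pointwise on $\Xi_c$, and for $F\in\mathcal{H}_{s,c}$ the right-hand side is the finite sum $c\sum_n\|f_n\|_{L(H,K)}^2(\tau_{n+1}-\tau_n)$; positive homogeneity and countable subadditivity of $\mathcal{E}$ then yield $\mathcal{E}(c\int_0^T\|F_t\|^2dt)\le c\sum_n(\tau_{n+1}-\tau_n)\mathcal{E}(\|f_n\|^2)=c\int_0^T\mathcal{E}(\|F_t\|_{L(H,K)}^2)\,dt$. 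Chaining these inequalities produces the claimed bound $\mathcal{E}(\sup_t\|(F\cdot S)_t\|_K^2)\le 4c\|F\|_{\mathcal{H}^2(H,K)}^2$ for $F\in\mathcal{H}_{s,c}(H,K)$.

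Next I would construct the integral for general $F\in\mathcal{H}^2(H,K)$ by choosing $F^n\in\mathcal{H}_{s,c}$ with $\|F-F^n\|_{\mathcal{H}^2}\to0$ and applying the BDG bound to the differences (legitimate since both the integral and $\mathcal{H}^2$ are linear on $\mathcal{H}_{s,c}$), which makes $((F^n\cdot S))_n$ Cauchy in the supremum norm with respect to $\mathcal{E}$. Passing to a fast subsequence with $\sum_n\mathcal{E}(\sup_t\|(F^{n+1}\cdot S)_t-(F^n\cdot S)_t\|_K^2)<\infty$ and using countable subadditivity in a Borel--Cantelli argument, this subsequence converges uniformly in $t$ for typical paths; completeness of $C([0,T],K)$ for an arbitrary Hilbert space $K$ then yields a limit $(F\cdot S)\in C([0,T],K)$ for typical paths, independent of the approximating sequence by the same estimate, and linear for typical paths. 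The BDG inequality passes to this limit through the Fatou property of $\mathcal{E}$ along the $\mathcal{E}$-a.e.\ convergent subsequence, giving $\mathcal{E}(\sup_t\|(F\cdot S)_t\|_K^2)\le\liminf_n 4c\|F^n\|_{\mathcal{H}^2}^2=4c\|F\|_{\mathcal{H}^2}^2$.

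For the coincidence with the classical integral, I would fix $P\in\mathcal{M}(\Xi_c)$ and use the domination $\mathcal{E}\ge E_P$ (the elementary direction of the superhedging duality, where on $\Xi$ of full $P$-measure the superhedges $(F^n\cdot S)+(G^n\cdot\mathbb{S})$ are bounded-below $P$-supermartingales and Fatou applies). This domination turns $\int_0^T\mathcal{E}(\|F_t-F^n_t\|^2)dt\to0$ into convergence $F^n\to F$ in $L^2(P\otimes d\langle S\rangle)$, using $d\langle S\rangle\le c\,dt$ $P$-a.s.\ on $\Xi_c$; hence the classical It\^o integrals $(F^n\cdot S)$ converge to the classical integral of $F$ in $L^2(P)$, while the model-free integrals converge $P$-a.s.\ along a subsequence, and since the two notions agree on $\mathcal{H}_{s,c}$ their limits coincide $P$-a.s.

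Finally, for the statement that $\Phi\colon(\omega,t)\mapsto f(t,(F\cdot S)_t(\omega))$ lies in $\mathcal{H}^2(H,K)$, finiteness of the norm is routine: the Lipschitz bound $\|f(t,x)\|_{L(H,K)}\le\|f(t,0)\|+L\|x\|_K$, subadditivity of $\mathcal{E}$ and the BDG inequality give $\int_0^T\mathcal{E}(\|\Phi_t\|^2)dt\le 2\int_0^T\|f(t,0)\|^2dt+2L^2T\,\mathcal{E}(\sup_t\|(F\cdot S)_t\|_K^2)<\infty$. The substantial point is approximability by $\mathcal{H}_{s,c}$, which I would address by discretising in time along a refining deterministic partition, $\Phi^N_t=\sum_j f(t_j,(F\cdot S)_{t_j})1_{(t_j,t_{j+1}]}(t)$, and controlling $\int_0^T\mathcal{E}(\|\Phi_t-\Phi^N_t\|^2)dt$ through the joint Lipschitz continuity of $f$ and the modulus of continuity of $(F\cdot S)$; this is precisely where the H\"older regularity encoded in $\Xi_c$ enters, forcing $\mathcal{E}(\sup_{|s-r|\le\delta}\|(F\cdot S)_s-(F\cdot S)_r\|_K^2)\to0$ as $\delta\to0$. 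Since the coefficients $f(t_j,(F\cdot S)_{t_j})$ are in general only measurable, whereas $\mathcal{H}_{s,c}$ demands $\omega$-continuous coefficients, I would add a second regularisation: $(F\cdot S)_{t_j}$ is an $\mathcal{E}$-limit of the $\omega$-continuous integrals $(F^n\cdot S)_{t_j}$, so composing with the Lipschitz map $f(t_j,\cdot)$ approximates each coefficient by $\omega$-continuous functions in $\mathcal{E}$, and a diagonal argument combining both approximations exhibits $\Phi$ as an $\mathcal{H}^2$-limit of $\mathcal{H}_{s,c}$ elements. I expect this final double approximation --- discretising in time with a uniformly controlled modulus of continuity while simultaneously regularising the random coefficients into $\omega$-continuous ones within the $\mathcal{E}$-based $\mathcal{H}^2$ norm --- to be the main obstacle.
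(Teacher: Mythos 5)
Your opening step contains the decisive gap: you invoke Proposition~\ref{prop:bdg.calE} ``in its $\mathcal{E}$-form'', i.e.\ $\mathcal{E}\big(\sup_t\|(F\cdot S)_t\|_K^2\big)\le 4\,\mathcal{E}\big((\|F\|_{L(H,K)}^2\cdot\langle S\rangle)_T\big)$, but no such form is proved in the paper, and it does not follow from Lemma~\ref{lem:integrals.squared} and Proposition~\ref{prop:bdg.calE} by the pathwise superhedging argument. The obstruction is admissibility: the pathwise Doob strategy from the proof of Proposition~\ref{prop:bdg.calE} has running value bounded below only by $-4(\|F\|^2\cdot\langle S\rangle)_t$, and a superhedge of the bracket $(\|F\|^2\cdot\langle S\rangle)_T$ purchased at cost $4\,\mathcal{E}\big((\|F\|^2\cdot\langle S\rangle)_T\big)+\varepsilon$ only dominates the bracket at terminal time through a $\liminf$; at intermediate times its value is merely nonnegative, so the combined portfolio can go strictly negative and fails the admissibility constraint in Definition~\ref{def:second-vovk}. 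This is exactly why Proposition~\ref{prop:bdg.calE} carries the uniform bound $\sup_{\omega\in\Xi}(\|F\|^2\cdot\langle S\rangle)_T(\omega)$, which may be infinite even when $\int_0^T\mathcal{E}(\|F_t\|^2)\,dt<\infty$ (the coefficients in $\mathcal{H}_{s,c}$ are continuous but unbounded). The paper closes this gap by a completely different mechanism, and it is the reason the theorem is restricted to $\Xi\equiv\Xi_c$: Proposition~\ref{lem:BDG.Xi.has.assumptions} applies the duality Theorem~\ref{thm:dual} to the lower semicontinuous function $\sup_t\|(F\cdot S)_t\|_K^2$ (for $F\in\mathcal{H}_{s,c}$), so that $\mathcal{E}=\sup_{P\in\mathcal{M}(\Xi_c)}E_P$, and then works under each martingale measure $P$, where Doob's $L^2$ maximal inequality and the It\^o isometry hold without any admissibility issue; the bound $d\langle S\rangle\le c\,dt$ on $\Xi_c$, Fubini, and the weak duality $E_P\le\mathcal{E}$ of Lemma~\ref{lem:weak.duality} then produce $4c\int_0^T\mathcal{E}(\|F_t\|^2_{L(H,K)})\,dt$. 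Your proposal never uses Theorem~\ref{thm:dual}, yet every quantitative estimate downstream (existence of the integral, the BDG bound for general $F$, and the error bounds in the composition step) rests on the unproved inequality; nothing you cite substitutes for the duality.

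Two further points on the composition step. First, you appeal to a uniform modulus estimate $\mathcal{E}\big(\sup_{|s-r|\le\delta}\|(F\cdot S)_s-(F\cdot S)_r\|_K^2\big)\to0$, asserting that the H\"older regularity in $\Xi_c$ ``forces'' it; this is neither proved nor needed. Since $\|\cdot\|_{\mathcal{H}^2(H,K)}$ integrates over $t$, the pointwise-in-$t$ estimate of Lemma~\ref{le:f-Lip-nice}, namely $\mathcal{E}\big(\|(F\cdot S)_t-(F\cdot S)_{\pi_n(t)}\|_K^2\big)\le c\int_{\pi_n(t)}^t\mathcal{E}(\|F_s\|^2_{L(H,K)})\,ds$ (again obtained via Theorem~\ref{thm:dual} and the It\^o isometry under each $P$), together with dominated convergence suffices. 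Second, your extra regularisation of the coefficients is avoidable, and your ordering creates a hidden problem: for general $F\in\mathcal{H}^2$ the integral $(F\cdot S)$ is only a typical-path limit, so $\sup_t\|\cdot\|^2$ and the time-increment functionals are not known to be semicontinuous, and Theorem~\ref{thm:dual} cannot be applied to them directly. The paper's ordering resolves both at once: for $F\in\mathcal{H}_{s,c}$ the maps $\omega\mapsto(F\cdot S)_{t_j}(\omega)$ are already continuous, so the time-discretised process lies in $\mathcal{H}_{s,c}$ with no regularisation, and the general case follows from the Lipschitz stability bound $\|f(\cdot,(F^n\cdot S))-f(\cdot,(F\cdot S))\|^2_{\mathcal{H}^2(H,K)}\le 4cTL_f^2\|F^n-F\|^2_{\mathcal{H}^2(H,K)}$. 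Your identification with the classical integral under $P\in\mathcal{M}(\Xi_c)$ via $E_P\le\mathcal{E}$ and $L^2(P)$-convergence is correct and matches the paper.
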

{
\begin{remark}\label{rem:integrand-2}
	Already under some mild regularity assumptions on $F\colon [0,T]\times \Omega\to L(H,K)$, we have   $F\in\mathcal{H}^2(H,K)$.
	More precisely, the following holds true.  Let $F\colon [0,T]\times \Omega\to L(H,K)$ be continuous such that $\|F_t-F_s\|_{L(H,K)}\leq \rho(|t-s|) (1+\sup_{r\in[0,T]}\|S_r\|_H^p)$ for some $p\in[1,\infty)$ and some continuous function $\rho$ which satisfies $\rho(0)=0$. 
	Then $F\in\mathcal{H}^2(H,K)$. We provide its proof in Subsection~\ref{subsec:proof-integral-ass}.
\end{remark}
}
%
%
To be able to define a notion of a solution of a stochastic differential equation for typical paths,
let $A\colon [0,T]\times\Omega\to\mathbb{R}$ be a process such that 
$\omega\mapsto A_t(\omega)$ is continuous for all $t$ and 
$t\mapsto |A(\omega)_t|$ is absolutely continuous with
$d |A|(\omega)/dt\le c$ for all $\omega\in\Omega$.
Moreover, let 
$\mu\colon [0,T]\times K\to L(\mathbb{R},K)$ and $\sigma\colon [0,T]\times K\to L(H,K)$
be two functions which satisfy the following.
\begin{assumption}\label{ass:SDE}
There is a constant $L>0$ such that for all $k,k^\prime\in K$ and $t,t^\prime\in[0,T]$ we have that
\begin{align}
\label{eq:Lipschitz-coeff}
\begin{split}
\|\sigma(t,k)-\sigma(t^\prime,k^\prime)\|_{L(H,K)}&\le L(|t-t^\prime|+\|k-k'\|_K),\\
\|\mu(t,k)-\mu(t^\prime,k^\prime)\|_{L(\mathbb{R},K)}&\le L(|t-t^\prime|+\|k-k'\|_K). 
\end{split}
\end{align}
\end{assumption}

Then we can state our third main result stating the existence of solutions of stochastic differential equations for typical paths.

\begin{theorem}
\label{thm:sde}
{Let  $\Xi\equiv\Xi_c$ be the prediction set defined in \eqref{Xi-c}} and  assume that 
 Assumption~\ref{ass:SDE} holds. Moreover, let $x_0 \in K$. Then there exists a unique (up to typical paths) $X\colon\Omega\to C([0,T],K)$ such that 
$X\in\mathcal{H}^2(K,\mathbb{R})$ and $X$ solves the SDE
\[ dX_t = \mu(t,X_t)\,dA_t + \sigma(t,X_t)\,dS_t,\quad X_0=x_0, \]
 i.e.~\[X_t=x_0+(\mu(\cdot,X) \cdot A)_t+ (\sigma(\cdot,X)\cdot S)_t\]
for typical paths.
\end{theorem}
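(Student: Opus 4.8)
The plan is to set up a Picard–Banach fixed point argument on a suitable complete metric space of integrands. First I would define the solution map $\Phi$ on the space $\mathcal{H}^2(K,\mathbb{R})$ (more precisely, on processes $X\colon\Omega\times[0,T]\to K$ that are continuous in $\omega$ and for which the relevant $\mathcal{H}^2$-norm is finite) by
\[
\Phi(X)_t := x_0 + (\mu(\cdot,X)\cdot A)_t + (\sigma(\cdot,X)\cdot S)_t.
\]
For this to make sense I first need that $X\mapsto \sigma(\cdot,X)$ and $X\mapsto\mu(\cdot,X)$ map into $\mathcal{H}^2$: this is exactly what the last assertion of Theorem~\ref{thm:integral.Xi.has.assumptions} provides for $\sigma$ (Lipschitz $f$ composed with an $\mathcal{H}^2$-integrand stays in $\mathcal{H}^2$), and the drift term is handled by the Lebesgue–Stieltjes integral against $A$, whose total variation derivative is bounded by $c$. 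Thus $\Phi$ is well-defined as a map from $\mathcal{H}^2(K,\mathbb{R})$ into the space of continuous ($\omega$-wise) $K$-valued integrals, and by Theorem~\ref{thm:integral.Xi.has.assumptions} the outputs again lie in $\mathcal{H}^2(K,\mathbb{R})$.

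The core estimate is a contraction bound. For two candidate solutions $X,Y$ I would estimate
\[
\mathcal{E}\Big(\sup_{s\le t}\|\Phi(X)_s-\Phi(Y)_s\|_K^2\Big)
\le 2\,\mathcal{E}\Big(\sup_{s\le t}\|(\mu(\cdot,X)-\mu(\cdot,Y))\cdot A)_s\|_K^2\Big)
+2\,\mathcal{E}\Big(\sup_{s\le t}\|((\sigma(\cdot,X)-\sigma(\cdot,Y))\cdot S)_s\|_K^2\Big).
\]
The stochastic-integral term is controlled by the weak BDG inequality of Theorem~\ref{thm:integral.Xi.has.assumptions}, giving a bound $8c\int_0^t\mathcal{E}(\|\sigma(\cdot,X_u)-\sigma(\cdot,Y_u)\|^2)\,du \le 8cL^2\int_0^t\mathcal{E}(\|X_u-Y_u\|_K^2)\,du$ by the Lipschitz assumption \eqref{eq:Lipschitz-coeff}. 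The drift term, being a finite-variation Lebesgue–Stieltjes integral with $d|A|/dt\le c$, is handled by a Cauchy–Schwarz / Jensen estimate producing $2cT L^2\int_0^t \mathcal{E}(\|X_u-Y_u\|_K^2)\,du$ (subadditivity and monotonicity of $\mathcal{E}$ are needed here, which follow from its definition as an infimum over superhedging strategies). Writing $\phi(t):=\mathcal{E}(\sup_{s\le t}\|X_s-Y_s\|_K^2)$, these combine to $\phi$ at the level of $\Phi(X),\Phi(Y)$ being bounded by $C\int_0^t\phi(u)\,du$ for a constant $C=C(c,T,L)$. Iterating this Picard estimate $n$ times yields the familiar factorially-decaying bound $(Ct)^n/n!$, so that a high enough iterate of $\Phi$ is a strict contraction in the metric induced by $\phi$; uniqueness up to typical paths then follows immediately by setting $X=\Phi(X)$, $Y=\Phi(Y)$ and invoking Gronwall's lemma to force $\phi\equiv 0$, i.e.\ $\mathcal{E}(\sup_t\|X_t-Y_t\|_K^2)=0$.

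Existence then comes from the completeness argument attached to this contraction: I would construct the Picard iterates $X^{(0)}:=x_0$, $X^{(n+1)}:=\Phi(X^{(n)})$, show they form a Cauchy sequence in the $\mathcal{H}^2(K,\mathbb{R})$-seminorm (using the factorial decay just obtained), and pass to a limit $X$ using completeness of $\mathcal{H}^2$ together with the continuity of the stochastic integral as a map out of $\mathcal{H}^2$ (the weak BDG inequality guarantees that $\mathcal{E}$-convergence of integrands transfers to $\mathcal{E}$-convergence of the integrals, uniformly in $t$). The main obstacle I anticipate is the bookkeeping needed to keep all objects inside $\mathcal{H}^2(K,\mathbb{R})$ along the iteration and to make the passage to the limit rigorous at the level of the outer measure $\mathcal{E}$ rather than under a fixed probability measure: in particular one must verify that the limit $X$ is itself continuous in $\omega$ and adapted so that $\sigma(\cdot,X)$ and $\mu(\cdot,X)$ are legitimate $\mathcal{H}^2$-integrands (again via Theorem~\ref{thm:integral.Xi.has.assumptions} and Remark~\ref{rem:integrand-2}), and that the fixed-point identity $X=\Phi(X)$ holds for typical paths. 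Everything else is a routine adaptation of the classical Picard–Lindelöf scheme, with the $L^2(P)$-norms systematically replaced by $\mathcal{E}(\cdot)$ and the BDG inequality of Theorem~\ref{thm:integral.Xi.has.assumptions} playing the role of the classical one.
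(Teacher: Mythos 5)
Your proposal is correct and is essentially the paper's own argument: a Picard iteration $X^{n+1}=x_0+(\mu(\cdot,X^n)\cdot A)+(\sigma(\cdot,X^n)\cdot S)$, controlled by the weak BDG bound $8cL^2\int_0^t\mathcal{E}\big(\|X_u-Y_u\|_K^2\big)\,du$ for the stochastic term and a Cauchy--Schwarz bound for the drift, yielding the Gronwall-type estimate with constant $C=(2c^2T+8c)L^2$ (Lemma~\ref{lem:aprori}), the factorial decay $(Ct)^n/n!$, a Cauchy sequence for the semi-norm $\mathcal{E}\big(\sup_{t\in[0,T]}\|\cdot\|_K^2\big)^{1/2}$ whose limit and typical-path convergence are supplied by Proposition~\ref{prop:L2-Banach-Banach}, and uniqueness by iterating the same integral inequality. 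One justification needs tightening: the interchange $\mathcal{E}\big(\int_0^t\cdot\,du\big)\le\int_0^t\mathcal{E}(\cdot)\,du$ in your drift estimate does \emph{not} follow from subadditivity and monotonicity of $\mathcal{E}$ alone (countable subadditivity does not pass to $dt$-integrals); the paper's Lemma~\ref{lem:intA} obtains it from the duality $\mathcal{E}=\sup_{P\in\mathcal{M}(\Xi_c)}E_P$ of Theorem~\ref{thm:dual-Xi-c} combined with Fubini under each $P$ and weak duality (Lemma~\ref{lem:weak.duality}) --- this is exactly why the theorem is restricted to $\Xi\equiv\Xi_c$ --- and your drift constant should read $2c^2TL^2$ rather than $2cTL^2$. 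Note also that the last assertion of Theorem~\ref{thm:integral.Xi.has.assumptions} covers only compositions $f(\cdot,(F\cdot S))$, whereas the iterates $X^n$ additionally contain $x_0$ and the drift part, so keeping $\mu(\cdot,X^n)$ and $\sigma(\cdot,X^n)$ inside $\mathcal{H}^2$ requires Lemma~\ref{le:f-Lip-nice}, Lemma~\ref{lem:intA} and the identifications of Remarks~\ref{rem:identification} and~\ref{rem2:identification}, which is precisely the bookkeeping obstacle you correctly anticipate.
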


For the precise definition of $(\mu(\cdot,X) \cdot A)$ and $(\sigma(\cdot,X)\cdot S)$ see 
Lemma \ref{lem:intA} \& Remark \ref{rem2:identification} and Lemma~\ref{le:f-Lip-nice} \& Remark~\ref{rem:identification}, respectively.

\begin{remark}\label{rem:explanationPart1}
We point out that 
with our methods we cannot
solve SDEs for typical paths on the space $\mathcal{H}^\infty(H,K)$ instead of $\mathcal{H}^2(H,K)$, {even when $\Xi\equiv\Xi_c$}. 
The reason is that the corresponding norm $\|\cdot\|_{\mathcal{H}^\infty(H,K)}$ is too strong to obtain a (similar) result that if $F \in \mathcal{H}^\infty(H,K)$ and $f\colon [0,T]\times K\to L(H,K)$ is Lipschitz continuous,	
then $f(\cdot,(F\cdot S))\in\mathcal{H}^\infty(H,K)$. But such a relation is the key property necessary to solve SDEs. We refer to Lemma~\ref{le:f-Lip-nice} 
for further details.
\end{remark}
{The main reason why we restrict ourselves in Theorem~\ref{thm:integral.Xi.has.assumptions} and Theorem~\ref{thm:sde} is the following duality result going back  to \cite{bartl2017pathwise}, which is heavily used in the respective proofs.
\begin{theorem}\label{thm:dual-Xi-c}
Let $\Xi\equiv\Xi_c$ be the prediction set defined in \eqref{Xi-c}.
	Then for every $X\colon C([0,T],H)\to[0,+\infty]$ which is the pointwise limit of an increasing sequence $X_n\colon\Omega\to[0,+\infty]$ of upper semicontinuous functions one has
	\begin{equation}
	\label{eq:duality-Xi-c}
	\mathcal{E}(X)=\sup_{P\in\mathcal{M}(\Xi_c)} E_P[X].
	\end{equation}
	In particular, the duality \eqref{eq:duality-Xi-c} holds for every nonnegative upper or lower semicontinuous function $X\colon C([0,T],H)\to[0,+\infty]$. 
\end{theorem}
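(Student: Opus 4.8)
The plan is to establish the duality $\mathcal{E}(X)=\sup_{P\in\mathcal{M}(\Xi_c)}E_P[X]$ in two inequalities, treating the easy direction first. For the inequality $\mathcal{E}(X)\geq\sup_{P\in\mathcal{M}(\Xi_c)}E_P[X]$, I would fix any $P\in\mathcal{M}(\Xi_c)$ and any admissible competitor $\lambda$ in the definition of $\mathcal{E}(X)$, with associated integrands $(F^n)\subseteq\mathcal{H}_s(H,\mathbb{R})$ and $(G^n)\subseteq\mathcal{H}_s(\mathbb{R},\mathbb{R})$. Since $S$ is a $P$-martingale and $P(\Xi_c)=1$, and since simple integrands against a martingale yield martingale transforms, one checks that $(F^n\cdot S)$ is a $P$-martingale; moreover $\mathbb{S}=\|S\|_H^2-\langle S\rangle$ is a $P$-martingale (this is exactly the statement that $\langle S\rangle$ is the quadratic variation compensator), so $(G^n\cdot\mathbb{S})$ is a martingale transform as well. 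Using the pointwise admissibility bound $\lambda+(F^n\cdot S)_t+(G^n\cdot\mathbb{S})_t\geq0$ on $\Xi_c$ together with Fatou's lemma applied to $\liminf_n$, I would obtain $E_P[X]\leq\lambda+\liminf_n E_P[(F^n\cdot S)_T+(G^n\cdot\mathbb{S})_T]=\lambda$, since each transform has zero expectation. Taking the infimum over $\lambda$ gives $E_P[X]\leq\mathcal{E}(X)$, and then the supremum over $P$.

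For the reverse and substantially harder inequality $\mathcal{E}(X)\leq\sup_{P\in\mathcal{M}(\Xi_c)}E_P[X]$, the strategy is to invoke the pricing--hedging duality from \cite{bartl2017pathwise}, which is precisely tailored to the prediction set $\Xi_c$ and to superhedging strategies trading in both $S$ and $\mathbb{S}=\int\omega\,d\omega$. The crux is to recognize that $\mathcal{E}(X)$ is exactly the pathwise minimal superhedging price in that framework: a competitor $(\lambda,(F^n),(G^n))$ in Definition~\ref{def:second-vovk} encodes superhedging of $X$ on $\Xi_c$ starting from capital $\lambda$, trading in the two securities $S$ and $\mathbb{S}$. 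I would first reduce to the case of an upper semicontinuous $X$ (by monotone convergence on both sides: $\mathcal{E}(X_n)\uparrow\mathcal{E}(X)$ follows from the definition of $\mathcal{E}$ as an infimum of admissible capitals, and $\sup_P E_P[X_n]\uparrow\sup_P E_P[X]$ by monotone convergence of expectations combined with an interchange-of-suprema argument), and then apply the duality theorem of \cite{bartl2017pathwise} to the bounded upper semicontinuous approximants. The key technical points to verify are that the compactness/tightness properties of $\Xi_c$ (forced by uniform Hölder bounds and the absolute continuity constraint $d\langle\omega\rangle/dt\leq c$) match the hypotheses of that duality result, and that our trading set---simple integrands in $\mathcal{H}_s$ with $\mathbb{F}_+$-stopping times---generates the same superhedging price as the strategy class used there.

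The main obstacle will be the interchange of the monotone limit with the duality in the second inequality, namely showing that the duality, which is stated for bounded upper semicontinuous payoffs, extends to the increasing-limit $X$ of upper semicontinuous $X_n$. On the hedging side one has $\mathcal{E}(X)=\lim_n\mathcal{E}(X_n)$ more or less by definition, but passing the limit through $\sup_{P\in\mathcal{M}(\Xi_c)}E_P[X_n]$ requires a minimax or Choquet-capacity argument: one must justify $\lim_n\sup_P E_P[X_n]=\sup_P\lim_n E_P[X_n]=\sup_P E_P[X]$, which hinges on weak compactness of $\mathcal{M}(\Xi_c)$ (again supplied by the structure of $\Xi_c$) so that a limiting maximizer exists and Fatou/monotone convergence can be applied along it. The final sentence of the statement, that the duality holds for any nonnegative upper \emph{or} lower semicontinuous $X$, then follows: the upper semicontinuous case is the base case, and a nonnegative lower semicontinuous $X$ is itself the pointwise increasing limit of the upper (indeed Lipschitz) semicontinuous functions $X\wedge n$ or of its Moreau--Yosida-type inf-convolutions, placing it within the scope of the increasing-limit hypothesis.
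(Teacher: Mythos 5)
Your first inequality is essentially the paper's weak duality (Lemma~\ref{lem:weak.duality}), but be careful with one point: you assert that $(F^n\cdot S)$ and $(G^n\cdot\mathbb{S})$ are $P$-martingales, which does not follow directly since the $f_n$ in a simple integrand need not be bounded and $\mathbb{S}$ is a priori only a local martingale under $P\in\mathcal{M}(\Xi_c)$. The paper handles this by first treating bounded finite simple strategies (where the martingale property holds), then localizing via the stopping times $\sigma_m$, and using the admissibility bound $\lambda+(F^n\cdot S)+(G^n\cdot\mathbb{S})\geq 0$ together with Fatou to conclude that the wealth process is a \emph{supermartingale}; Fatou then gives $E_P[X]\leq\lambda$. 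This is a repairable technicality, and your overall route here matches the paper.

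The hard direction, however, contains a genuine gap, and moreover your diagnosis of where the difficulty lies is inverted. On the dual side no compactness of $\mathcal{M}(\Xi_c)$ is needed at all: for an increasing sequence $X_n\uparrow X$ one has $\lim_n\sup_P E_P[X_n]=\sup_P\sup_n E_P[X_n]=\sup_P E_P[X]$ simply by commuting suprema and monotone convergence. The real crux is the primal-side continuity from below, which you dismiss with ``$\mathcal{E}(X_n)\uparrow\mathcal{E}(X)$ follows from the definition'': it does not. The definition of $\mathcal{E}(X)$ requires a \emph{single} initial capital $\lambda$ and a \emph{single} sequence of strategies, admissible uniformly in $t$ and $n$, whose terminal liminf dominates $X$; gluing together near-optimal strategies for the individual $X_n$ yields no common admissibility bound. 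The paper's proof of Theorem~\ref{thm:dual} resolves exactly this: it writes $\bar\Xi$ as an increasing union of compacts $\bar\Xi_n$ stable under stopping (Lemma~\ref{lem:choose.compacts}), takes finite simple near-optimal strategies for localized functionals $\bar{\mathcal{E}}_n(\bar X_n)$, stops each strategy when its wealth hits $-\varepsilon/2$, and invokes Lemma~\ref{lem:integral.over.time.geq0} --- terminal superhedging of a nonnegative payoff by finite simple strategies on a stopping-stable set forces the wealth to be nonnegative at all intermediate times --- to conclude the stopping never occurs on $\bar\Delta\cap\bar\Xi_n$, producing a uniformly admissible sequence with capital $m+\varepsilon$. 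Relatedly, you cannot invoke \cite{bartl2017pathwise} off the shelf: the paper notes explicitly that the admissibility condition there is different (the adaptation follows \cite{bartl2017duality}), and the topological hypotheses cannot be verified on $\Xi_c\subseteq\Omega$ directly, since the pathwise quadratic variation is not a continuous functional of $\omega$; the paper therefore transfers the entire problem via Lemma~\ref{lem:transfer} to the enlarged space $\bar\Omega=C([0,T],H)\times C([0,T],\mathbb{R})$, in which $\langle\omega\rangle$ appears as an explicit second coordinate and $\bar\Xi$ is genuinely a countable union of compacts --- a step your proposal flags but does not supply. Finally, a small slip in your last sentence: for lower semicontinuous $X$ the truncations $X\wedge n$ are again lower semicontinuous, not upper semicontinuous; only your inf-convolution alternative (which produces continuous, hence u.s.c., increasing approximants) places the l.s.c. case within the scope of the theorem.
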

\begin{remark}\label{rem:quasi-sure}
	From the duality result in Theorem~\ref{thm:dual-Xi-c}, we see that $\mathcal{E}(N)=0$ if and only if $P(N)=0$ for all $P \in  \mathcal{M}(\Xi_c)$.
	Moreover, the duality result shows that the stochastic integral defined in Theorem~\ref{thm:integral.Xi.has.assumptions} and the corresponding solution of the 
	stochastic differential equation in Theorem~\ref{thm:sde} provide a way to solve the aggregation problem appearing in the quasi-sure setting with respect to $\mathcal{M}(\Xi_c)$ $($see, e.g., \cite{Nutz12}$)$, which has particular applications in financial mathematics when model uncertainty occurs. 
\end{remark}
\begin{remark}\label{rem:gen-Xi-c}
	In fact, Theorem~\ref{thm:integral.Xi.has.assumptions} and Theorem~\ref{thm:sde} could have been stated for more general prediction sets $\Xi\subseteq \Xi_c$ satisfying the conditions imposed  in Theorem~\ref{thm:dual} to obtain the duality result analogous to Theorem~\ref{thm:dual-Xi-c}. However, since the conditions are rather technical and our goal is to include as many paths as possible, we decided to state Theorem~\ref{thm:integral.Xi.has.assumptions} and Theorem~\ref{thm:sde} with respect to $\Xi_c$.
\end{remark}
}
\section{Proofs of our main results}
\label{sec:Proof}

\subsection{Properties of $\mathcal{E}(\cdot)$}
In this subsection, we analyze  properties of the outer measure $\mathcal{E}$ which will be crucial to define stochastic integrals and solutions of stochastic differential equation for typical paths.  Throughout this subsection, we work with the conventions 
$0\cdot(+\infty)=(+\infty)\cdot 0= 0$ and $+\infty-\infty=-\infty+\infty=+\infty$.
First, observe that directly from its definition, the outer measure $\mathcal{E}$ is sublinear, positive homogeneous, and satisfies $\mathcal{E}(\lambda)\leq \lambda$ for all $\lambda \in [0,+\infty)$.  In addition, $\mathcal{E}$ satisfies the following properties. 

\begin{proposition}
	\label{prop:expectation.is.sigma.subadd}
	The functional $\mathcal{E}$ is countably subadditive, i.e.
	\[\mathcal{E}\Big({\textstyle\sum\limits_{n=1}^\infty} X_n\Big)
	\leq \sum_{n=1}^\infty \mathcal{E}(X_n)\]
	and satisfies
	\[\mathcal{E}\Big(\big({\textstyle\sum\limits_{n=1}^\infty} X_n \big)^2\Big)^{\frac{1}{2}}
	\leq\sum_{n=1}^\infty \mathcal{E}\big(X_n^2\big)^\frac{1}{2}\]
	for every sequence 
	$X_n\colon\Omega\to[0,+\infty]$, $n \in \mathbb{N}$. Furthermore, it fulfills  the Cauchy--Schwarz inequality
	\[
	\mathcal{E}(|X| |Y|)\leq \mathcal{E}(X^2)^\frac{1}{2} \mathcal{E}(Y^2)^\frac{1}{2} 
	\]
	for all $X,Y\colon\Omega\to[-\infty,+\infty]$.
\end{proposition}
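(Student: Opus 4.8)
The plan is to prove the three assertions in the order countable subadditivity, then Cauchy--Schwarz, then the middle (Minkowski-type) inequality, since the last two will rely on the first. I first record that $\mathcal{E}$ is monotone: if $X\le Y$, then every pair of sequences admissible for $Y$ in Definition~\ref{def:second-vovk} is also admissible for $X$, so the infimum defining $\mathcal{E}(X)$ ranges over a larger set and $\mathcal{E}(X)\le\mathcal{E}(Y)$. Throughout I use the stated conventions $0\cdot(+\infty)=0$ and $+\infty-\infty=+\infty$.

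For countable subadditivity I argue directly from the definition. Assuming $\sum_n\mathcal{E}(X_n)<\infty$ (otherwise nothing is to show), I fix $\varepsilon>0$ and choose for each $n$ a value $\lambda_n\le\mathcal{E}(X_n)+\varepsilon 2^{-n}$ together with sequences $(F^{n,k})_k$ in $\mathcal{H}_s(H,\mathbb{R})$ and $(G^{n,k})_k$ in $\mathcal{H}_s(\mathbb{R},\mathbb{R})$ such that, writing $W^{n,k}:=(F^{n,k}\cdot S)+(G^{n,k}\cdot\mathbb{S})$, one has $\lambda_n+W^{n,k}_t\ge 0$ on $\Xi$ for all $k,t$ and $\lambda_n+\liminf_k W^{n,k}_T\ge X_n$ on $\Xi$. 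Setting $\lambda:=\sum_n\lambda_n<\infty$, I form the partial-sum strategies $\tilde F^k:=\sum_{n=1}^k F^{n,k}$ and $\tilde G^k:=\sum_{n=1}^k G^{n,k}$, which again belong to $\mathcal{H}_s(H,\mathbb{R})$ and $\mathcal{H}_s(\mathbb{R},\mathbb{R})$ after passing to a common refinement of the finitely many stopping times involved. Admissibility is preserved because $\lambda+\sum_{n=1}^k W^{n,k}_t=\sum_{n>k}\lambda_n+\sum_{n=1}^k(\lambda_n+W^{n,k}_t)\ge 0$. For the terminal condition I fix $N$, bound each term with $n>N$ from below by $-\lambda_n$ using admissibility at $t=T$, apply superadditivity of $\liminf$ over the finitely many indices $n\le N$, and invoke the individual terminal inequalities to obtain $\lambda+\liminf_k\big((\tilde F^k\cdot S)_T+(\tilde G^k\cdot\mathbb{S})_T\big)\ge\sum_{n=1}^N X_n$ on $\Xi$; letting $N\to\infty$ yields the bound for $\sum_n X_n$. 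Hence $\mathcal{E}\big(\sum_n X_n\big)\le\lambda\le\sum_n\mathcal{E}(X_n)+\varepsilon$, and $\varepsilon\downarrow 0$ finishes the claim.

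For Cauchy--Schwarz I use Young's inequality $|X|\,|Y|\le\frac{\epsilon}{2}X^2+\frac{1}{2\epsilon}Y^2$, valid pointwise in $[0,+\infty]$, which with monotonicity and sublinearity gives $\mathcal{E}(|X|\,|Y|)\le\frac{\epsilon}{2}\mathcal{E}(X^2)+\frac{1}{2\epsilon}\mathcal{E}(Y^2)$ for every $\epsilon>0$. When $0<\mathcal{E}(X^2),\mathcal{E}(Y^2)<\infty$, optimizing over $\epsilon$ (at $\epsilon=(\mathcal{E}(Y^2)/\mathcal{E}(X^2))^{1/2}$) yields exactly $\mathcal{E}(X^2)^{1/2}\mathcal{E}(Y^2)^{1/2}$. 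If one factor is $0$ and the other finite, letting $\epsilon\to\infty$ (resp. $\epsilon\to 0$) gives $\mathcal{E}(|X|\,|Y|)=0$; if one factor is $+\infty$ and the other positive the right-hand side is $+\infty$; and the remaining corner $\mathcal{E}(X^2)=0$, $\mathcal{E}(Y^2)=+\infty$ (where the right-hand side is $0$ by convention) is handled by noting that $\mathcal{E}(X^2)=0$ forces $\mathcal{E}(1_{\{X\ne 0\}})=0$, whence $\mathcal{E}(|X|\,|Y|)\le\mathcal{E}\big((+\infty)\,1_{\{X\ne 0\}}\big)=0$ via the countable subadditivity already established.

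Finally, the middle inequality follows by combining the two previous results. Expanding pointwise in $[0,+\infty]$ (all terms are nonnegative, so the distributive law together with the conventions applies) gives $\big(\sum_n X_n\big)^2=\sum_n\sum_m X_nX_m$, a sum over the countable index set $\mathbb{N}\times\mathbb{N}$. Countable subadditivity and then Cauchy--Schwarz on each cross term yield
\[ \mathcal{E}\Big(\big({\textstyle\sum_n} X_n\big)^2\Big)\le\sum_{n,m}\mathcal{E}(X_nX_m)\le\sum_{n,m}\mathcal{E}(X_n^2)^{1/2}\mathcal{E}(X_m^2)^{1/2}=\Big(\sum_n\mathcal{E}(X_n^2)^{1/2}\Big)^2, \]
and taking square roots gives the assertion. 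The main obstacle is the superhedging construction underlying countable subadditivity: one must splice a doubly indexed family of admissible simple strategies into a single admissible sequence while respecting the $\liminf$ in the definition of $\mathcal{E}$ and the requirement that all inequalities hold pointwise on $\Xi$. Once this is set up correctly, the Cauchy--Schwarz and Minkowski-type inequalities follow quickly and essentially formally from subadditivity and positive homogeneity.
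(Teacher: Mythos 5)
Your proposal is correct and follows essentially the same route as the paper's proof: the diagonal splicing $\tilde F^k=\sum_{n\le k}F^{n,k}$ combined with superadditivity of $\liminf$ (truncating at a fixed $N$ and using admissibility to discard the tail terms) for countable subadditivity, Young's inequality for Cauchy--Schwarz (the paper normalizes $\tilde X=X/\mathcal{E}(X^2)^{1/2}$ with $\alpha=1$ where you optimize over $\epsilon$, and it handles the $0\cdot(+\infty)$ corner via $|X|\,|Y|\le\sum_n|X|$ where you use $(+\infty)1_{\{X\neq 0\}}$ --- both resting on the already-proved countable subadditivity), and the expansion $\big(\sum_n X_n\big)^2=\sum_{n,m}X_nX_m$ for the Minkowski-type inequality. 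No gaps.
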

\begin{proof}
The proof of countable subadditivity is the same as in \cite[Lemma 4.1]{Vovk12} and \cite[Lemma 2.3]{PerkowskiPromel16}. However, due to the different setting and in order to be self contained,
we provide a proof. Without loss of generality assume that $\sum_n \mathcal{E}(X_n)<+\infty$.
Fix $\varepsilon>0$, a sequence $(c_n)$ in $(0,+\infty)$ such that $\sum_n c_n=\varepsilon$,
and let $\lambda_n:=\mathcal{E}(X_n)+c_n$, as well as
$\lambda:=\sum_n \lambda_n$.
Then, by definition of $\mathcal{E}(X_n)$, for every $n$ there are two sequences of simple integrands 
$(F^{n,m})_m$ and $(G^{n,m})_m$ such that 
\[
\lambda_n+(F^{n,m}\cdot S)_t+(G^{n,m}\cdot \mathbb{S})_t\geq 0\quad\text{for all $t\in[0,T]$ and $m$}\]
and
\[
\lambda_n+\liminf_m\big( (F^{n,m}\cdot S)_T+(G^{n,m}\cdot \mathbb{S})_T\big) \geq X_n.
\]
Now define the simple integrands $F^m:=\sum_{n\le m} H^{n,m}$ and $G^m:=\sum_{n\le m} G^{n,m}$ for each $m$.
Then $\lambda+(F^m\cdot S)_t+(G^m\cdot \mathbb{S})_t\geq 0$ for all $m$,
and superadditivity of $\liminf$ implies for every $k\in\mathbb{N}$ that
\begin{align*}
&\lambda + \liminf_m \big( (F^m\cdot S)_T+(G^m\cdot \mathbb{S})_T\big)\\
&=\liminf_m\Big( {\textstyle\sum\limits_{n\le m}}\big(\lambda_n +  (F^{n,m} \cdot S)_T+(G^{n,m}\cdot \mathbb{S})_T\big)\Big)\\
&\geq  \sum_{n \le k} \liminf_m\Big(\lambda_n + 
(F^{n,m} \cdot S)_T+(G^{n,m}\cdot \mathbb{S})_T\Big)
\geq \sum_{n\le k} X_n.
\end{align*}
 Passing to the limit in $k$ yields
$\mathcal{E}(\sum_n X_n)\leq \lambda=\sum_n \mathcal{E}(X_n)+\varepsilon$. Since $\varepsilon>0$ was arbitrary, we obtain the first inequality.

As for H\"older's inequality, let $X,Y\colon\Omega\to[-\infty,+\infty]$. First, we assume
that $\mathcal{E}(X^2)<+\infty$ and $\mathcal{E}(Y^2)<+\infty$.
If $\mathcal{E}(X^2)=0$ or $\mathcal{E}(Y^2)=0$, then the pointwise estimate 
$|X| |Y|\leq \frac{\alpha X^2}{2}+\frac{Y^2}{2\alpha}$ for all $\alpha>0$,
together with  sublinearity and monotonicity of $\mathcal{E}$ yields 
\[\mathcal{E}(|X| |Y|)\leq \frac{\alpha}{2}\mathcal{E}(X^2)+\frac{1}{2\alpha}\mathcal{E}(Y^2)\]
so that $\mathcal{E}(|X| |Y|)=0$. If $\mathcal{E}(X^2)>0$ and $\mathcal{E}(Y^2)>0$, then the previous inequality applied to $\tilde X:= X/\mathcal{E}(X^2)^\frac{1}{2}$ and  $\tilde Y:= Y/\mathcal{E}(Y^2)^\frac{1}{2}$ with $\alpha=1$ leads to 
\[\frac{\mathcal{E}(|X| |Y|)}{ \mathcal{E}(X^2)^\frac{1}{2} \mathcal{E}(Y^2)^\frac{1}{2}}=\mathcal{E}(|\tilde X| |\tilde Y|)\leq \frac{\mathcal{E}(\tilde X^2)}{2}+\frac{\mathcal{E}(\tilde Y^2)}{2}= 1.\]
Second, if $\mathcal{E}(X^2)=0$ and $\mathcal{E}(Y^2)=+\infty$ the first part implies
$0\le\mathcal{E}(|X|)\leq \mathcal{E}(X^2)^\frac{1}{2}$, i.e.~$\mathcal{E}(|X|)=0$. Therefore, the pointwise inequality $|X| |Y| \leq \sum_{n=1}^\infty |X|$ together with the countable subadditivity of $\mathcal{E}$ yields that
\[\mathcal{E}\big(|X| |Y|\big)\le \mathcal{E}\Big({\textstyle\sum\limits_{n=1}^\infty} |X|\Big)\le \sum_{n=1}^\infty \mathcal{E}\big( |X|\big)=0.\]

To show the second statement, let $(X_n)$ be a family of functions $X_n\colon\Omega\to[0,+\infty]$ which is at most countable. By the previous steps we have
	\begin{align*}
	\mathcal{E}\Big( \big({\textstyle\sum\limits_{n}} X_n \big)^2 \Big)&= \mathcal{E}\Big( {\textstyle\sum\limits_{n,m}} X_n X_m \Big)
	\leq \sum_{n,m}  \mathcal{E} (  X_n X_m ) \\
	&\leq \sum_{n,m}  \mathcal{E}(X_n^2)^{\frac{1}{2}} \mathcal{E}(X_m^2)^{\frac{1}{2}}
	=\Big(\sum_n  \mathcal{E}(X_n^2)^{\frac{1}{2}}\Big)^2.
	\end{align*}
	It remains to take the root.
\end{proof}

\begin{proposition}
\label{prop:L2-Banach-Banach}
	Given an arbitrary Banach space $(B,\|\cdot\|_B)$, we define
	$\|X\|:=\mathcal{E}(\|X\|_B^2)^{\frac{1}{2}}$ for $X\colon\Omega\to B$.
	Then the following hold:
	\begin{enumerate}[(i)]
		\item\label{seminorm} The functional $\|\cdot\|$ is a semi-norm, i.e.~it only takes non-negative values, is absolutely homogeneous,
		and satisfies the triangle inequality. 
		\item \label{lem:complete}
		Every Cauchy sequence $X_n\colon\Omega\to B$, $n\in \mathbb{N}$, w.r.t.~$\|\cdot\|$ has a limit $X\colon \Omega\to B$, i.e.~$\|X_n - X\|\to 0$, and there is a subsequence $(n_k)$ such that $X_{n_k}(\omega)\to X(\omega)$ for typical paths.
	\end{enumerate}
\end{proposition}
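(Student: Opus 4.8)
The plan is to deduce the entire statement from the second inequality of Proposition~\ref{prop:expectation.is.sigma.subadd}, namely $\mathcal{E}\big((\sum_n X_n)^2\big)^{1/2}\le \sum_n \mathcal{E}(X_n^2)^{1/2}$, which simultaneously encodes the triangle inequality in (i) and the crucial summation estimate needed for completeness in (ii). For part \eqref{seminorm}, non-negativity is immediate from $\mathcal{E}\ge 0$, and absolute homogeneity follows from positive homogeneity of $\mathcal{E}$ together with $\|\lambda X\|_B^2=|\lambda|^2\|X\|_B^2$. For the triangle inequality I would use the pointwise bound $\|X+Y\|_B\le \|X\|_B+\|Y\|_B$ in $B$, monotonicity of $\mathcal{E}$, and then the two-term case of the displayed inequality applied to $X_1:=\|X\|_B$ and $X_2:=\|Y\|_B$, giving $\|X+Y\|\le\mathcal{E}\big((\|X\|_B+\|Y\|_B)^2\big)^{1/2}\le\|X\|+\|Y\|$.

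For part \eqref{lem:complete} I would run the standard $L^p$-completeness argument. First pass to a subsequence $(n_k)$ with $\|X_{n_{k+1}}-X_{n_k}\|\le 2^{-k}$, and set $Y:=\sum_k\|X_{n_{k+1}}-X_{n_k}\|_B\colon\Omega\to[0,+\infty]$. The key inequality gives $\mathcal{E}(Y^2)^{1/2}\le\sum_k\|X_{n_{k+1}}-X_{n_k}\|\le 1<+\infty$. A Markov-type estimate then shows $N:=\{Y=+\infty\}$ is null: since $1_{\{Y^2\ge\lambda\}}\le \lambda^{-1}Y^2$, monotonicity and positive homogeneity yield $\mathcal{E}(1_N)\le\lambda^{-1}\mathcal{E}(Y^2)\to 0$ as $\lambda\to\infty$. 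On $N^c$ the series $\sum_k(X_{n_{k+1}}(\omega)-X_{n_k}(\omega))$ converges absolutely in the Banach space $B$, so $X_{n_k}(\omega)$ converges; I would define $X(\omega)$ as this limit on $N^c$ and, say, $X(\omega):=0$ on $N$, which already gives the claimed typical-path convergence of the subsequence.

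To obtain norm convergence of the subsequence, write $Z_k:=\sum_{j\ge k}\|X_{n_{j+1}}-X_{n_j}\|_B$; on $N^c$ one has the tail bound $\|X-X_{n_k}\|_B\le Z_k$. The point to be careful about, and what I expect to be the only genuine subtlety, is that this pointwise bound a priori holds only off the null set $N$. I would resolve this by observing that $Z_k\equiv+\infty$ on $N$ as well (a tail of a divergent series of finite terms stays divergent), so that $\|X-X_{n_k}\|_B^2\le Z_k^2$ holds on all of $\Omega$; applying the summation inequality then gives $\|X-X_{n_k}\|\le\sum_{j\ge k}2^{-j}=2^{-k+1}\to 0$. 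Finally, a routine Cauchy argument upgrades subsequential convergence to convergence of the full sequence: for $\varepsilon>0$, combine $\|X_n-X_{n_k}\|<\varepsilon/2$ (Cauchy) with $\|X_{n_k}-X\|<\varepsilon/2$ (just proved) for suitably large $n,k$. All remaining manipulations are routine; the substantive input is entirely carried by Proposition~\ref{prop:expectation.is.sigma.subadd}.
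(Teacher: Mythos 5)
Your proof is correct and follows essentially the same route as the paper: the triangle inequality via the square-sum estimate of Proposition~\ref{prop:expectation.is.sigma.subadd}, and completeness via a fast subsequence, nullity of $N=\{\sum_k\|X_{n_{k+1}}-X_{n_k}\|_B=+\infty\}$, and the tail bound $\|X-X_{n_k}\|\le 2^{-k+1}$. The only differences are that you spell out two details the paper leaves implicit --- the Markov-type estimate showing $\mathcal{E}(1_N)=0$ and the observation that the tail sum is $+\infty$ on $N$, so the pointwise bound $\|X-X_{n_k}\|_B\le Z_k$ holds on all of $\Omega$ --- both of which are exactly the intended justifications.
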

\begin{proof}
	It is clear that $\|\cdot\|$ only takes values in $[0,+\infty]$ and is absolutely homogeneous.
	To show the triangle inequality, let $X,Y\colon\Omega\to B$. 
	It follows from Proposition \ref{prop:expectation.is.sigma.subadd} that
	\begin{align*}
	\|X+Y\|
	&\le \mathcal{E}\big((\|X\|_B+\|Y\|_B)^2\big)^{\frac{1}{2}}  \\
	&\le \mathcal{E}\big(\|X\|_B^2\big)^{\frac{1}{2}} + \mathcal{E}\big(\|Y\|_B^2\big)^{\frac{1}{2}} 
	= \|X\| + \|Y\|.
	\end{align*}

	To see that \eqref{lem:complete} holds true, let $(X_n)$ be a Cauchy sequence and  
	choose a subsequence $(X_{n_k})$ such that $\|X_{n_{k+1}}-X_{n_{k}}\|\leq 2^{-k}$. 
	By Proposition \ref{prop:expectation.is.sigma.subadd} it holds
	\[ \mathcal{E}\Big(\big({\textstyle\sum\limits_{k}} \|X_{n_{k+1}}-X_{n_k}\|_B\big)^2\Big)^{\frac{1}{2}}
	\leq  \sum_{k} \|X_{n_{k+1}}-X_{n_k}\|
	<+\infty. \]
	This implies that the set $N:=\{\sum_{k} \|X_{n_{k+1}}-X_{n_k}\|_B=+\infty\}$
	satisfies $\mathcal{E}(1_N)=0$. 
	As $B$ is complete, for every $\omega\in N^c$, the sequence $(X_{n_{k}}(\omega))_k$ has a limit.
	Therefore, $X:=\lim_k X_{n_k}1_{N^c}$ is a mapping from $\Omega$ to $B$ and 
	Proposition \ref{prop:expectation.is.sigma.subadd} yields  
	\[ \|X-X_{n_k}\|
	\leq \mathcal{E}\Big(\big( {\textstyle\sum\limits_{l\geq k}} \|X_{n_{l+1}}-X_{n_l}\|_B\big)^2 \Big)^{\frac{1}{2}}
	\leq \sum_{l\geq k} \| X_{n_{l+1}}-X_{n_l} \|
	\leq 2^{-k+1}
	\to 0\]
	as $k$ tends to infinity. 
	Since $(X_n)$ is a Cauchy sequence, the triangle inequality shows that 
	$\|X-X_n\|\leq \|X-X_{n_k}\|+\|X_{n_k}-X_n\|\to 0$ as $k,n\to\infty$.
\end{proof}

\begin{lemma}
\label{lem:weak.duality}
	For every $P\in\mathcal{M}(\Xi)$ and every measurable $X\colon \Omega\to[0,+\infty]$
	one has $E_P[X]\leq\mathcal{E}(X)$.
\end{lemma}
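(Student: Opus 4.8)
The plan is to unwind the definition of $\mathcal{E}(X)$ and to exploit the fact that every admissible hedging strategy produces a nonnegative $P$-supermartingale whose terminal value dominates $X$. Concretely, I fix $P\in\mathcal{M}(\Xi)$ and a measurable $X\geq 0$; we may assume $\mathcal{E}(X)<+\infty$, since otherwise there is nothing to prove. I then pick an arbitrary $\lambda\geq 0$ that is admissible in Definition~\ref{def:second-vovk}, together with witnessing sequences $(F^n)$ in $\mathcal{H}_s(H,\mathbb{R})$ and $(G^n)$ in $\mathcal{H}_s(\mathbb{R},\mathbb{R})$. It then suffices to show $E_P[X]\leq\lambda$ and afterwards to take the infimum over all such $\lambda$.

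The key step, and the one I expect to be the main obstacle, is to verify that $\mathbb{S}$ is a continuous $P$-local martingale. Since $P\in\mathcal{M}(\Xi)$, the coordinate process $S$ is a $P$-$\mathbb{F}$-martingale with continuous paths (because $P(\Xi)=1$ and $\Xi\subseteq C([0,T],H)$). The crucial point is that the \emph{pathwise} quadratic variation $\langle\omega\rangle$, defined along the Lebesgue-type partitions $\sigma^m_k$, agrees $P$-almost surely with the probabilistic quadratic variation $\langle S\rangle^P$ of the continuous martingale $S$; this is the classical fact that the quadratic variation along such refining stopping-time partitions converges, uniformly in $t$ and in $P$-probability, to $\langle S\rangle^P$. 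Granting this identification, $\mathbb{S}=\|S\|_H^2-\langle\cdot\rangle=\|S\|_H^2-\langle S\rangle^P$ holds $P$-a.s., and standard Hilbert-space stochastic calculus (It\^o's formula, e.g.\ \cite{metivier1982semimartingales}) exhibits $\|S\|_H^2-\langle S\rangle^P=\|S_0\|_H^2+2\int_0^\cdot\langle S_u,dS_u\rangle_H$ as a continuous $P$-local martingale.

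With this in hand the remainder is routine. For each $n$, the integrals $(F^n\cdot S)$ and $(G^n\cdot\mathbb{S})$ are stochastic integrals of \emph{simple} integrands against the $P$-local martingales $S$ and $\mathbb{S}$, and hence are themselves $P$-local martingales (martingale transforms of local martingales by simple integrands, using a localization argument to handle the possibly unbounded coefficients $f_n,g_n$). Therefore $M^n:=\lambda+(F^n\cdot S)+(G^n\cdot\mathbb{S})$ is a $P$-local martingale with $M^n_0=\lambda$, and the first admissibility requirement $\lambda+(F^n\cdot S)_t+(G^n\cdot\mathbb{S})_t\geq 0$ on $\Xi$, together with $P(\Xi)=1$, makes $M^n$ nonnegative $P$-a.s. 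A nonnegative local martingale is a supermartingale, so $E_P[M^n_T]\leq M^n_0=\lambda$. Finally, the second admissibility condition gives $X\leq\lambda+\liminf_n\big((F^n\cdot S)_T+(G^n\cdot\mathbb{S})_T\big)=\liminf_n M^n_T$ on $\Xi$, hence $P$-a.s., and Fatou's lemma (applicable since $M^n_T\geq 0$) yields $E_P[X]\leq E_P[\liminf_n M^n_T]\leq\liminf_n E_P[M^n_T]\leq\lambda$. Taking the infimum over all admissible $\lambda$ gives $E_P[X]\leq\mathcal{E}(X)$, as claimed.
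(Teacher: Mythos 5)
Your proof is correct and follows essentially the same route as the paper: both arguments reduce the claim to showing that $\lambda+(F^n\cdot S)+(G^n\cdot \mathbb{S})$ is a nonnegative $P$-supermartingale with initial value $\lambda$ and then conclude by Fatou's lemma applied to the $\liminf$ over $n$. The only difference is in how the supermartingale property is obtained: the paper proceeds by hand, first proving the true martingale property for bounded simple integrands with finitely many jumps (approximating the $\mathcal{F}_{\tau_n+}$-measurable coefficients by finite-range functions) and then localizing via explicit stopping times $\sigma_m,\tilde\sigma_m$ together with Fatou, whereas you invoke the identification of the pathwise quadratic variation with $\langle S\rangle^P$, the Hilbert-space It\^o formula to see that $\mathbb{S}$ is a $P$-local martingale, and the standard fact that nonnegative local martingales are supermartingales --- making explicit a point (that $\mathbb{S}$ is a local martingale under every $P\in\mathcal{M}(\Xi)$) which the paper's proof uses only implicitly.
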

\begin{proof}
Fix $P\in\mathcal{M}(\Xi)$ and $X\colon \Omega\to[0,+\infty]$ measurable.

First, if $F\in\mathcal{H}_s(H,\mathbb{R})$ and 
	$G\in\mathcal{H}_s(\mathbb{R},\mathbb{R})$ are of the form
	\[F=\sum_{n\leq N} f_n1_{(\tau_n,\tau_{n+1}]}\quad\mbox{and}\quad
	G=\sum_{n\leq N} g_n1_{(\tau_n,\tau_{n+1}]}\] such that $\sup_{n\le N} \|f_n\|_H<\infty$ and $\sup_{n \le N} |g_n|<\infty$ for some $N\in\mathbb{N}$, then the process $(F\cdot S)+(G\cdot \mathbb{S})$ is a continuous martingale (the martingale property follows e.g.~by approximating $f_n$ $P$-a.s.~by 
	functions with finite range and dominated convergence).

    Second, let $\lambda\geq 0$, $F=\sum_{n} f_n1_{(\tau_n,\tau_{n+1}]}$ in $\mathcal{H}_s(H,\mathbb{R})$, and $G=\sum_{n} g_n1_{(\tau_n,\tau_{n+1}]}$ in $\mathcal{H}_s(\mathbb{R},\mathbb{R})$ be 
	such that $\lambda + (F\cdot S)_t + (G\cdot \mathbb{S})_t\geq 0$ on $\Xi$ for all $t$.
	Define the stopping times
	\[\sigma_m:=\inf\{t\geq 0 : \|S_t\|_H\geq m \text{ or } \|F_t\|_{L(H,\mathbb{R})}\geq m\}\wedge \tau_m\] and similarly, $\tilde{\sigma}_m:=\inf\{t\geq 0 : |\mathbb{S}_t|\geq m \text{ or } \|G_t\|_{L(\mathbb{R},\mathbb{R})}\geq m\}\wedge \tau_m$. Note that for every 
	$\omega$ there is $m=m(\omega)$ such that $\sigma_m(\omega)=\tilde{\sigma}_m(\omega)=T$.
	Defining $F^m:= F1_{(0,\sigma_m]}$ in $\mathcal{H}_s(H,\mathbb{R})$
	and $G^m:= G1_{(0,\tilde{\sigma}_m]}$ in $\mathcal{H}_s(\mathbb{R},\mathbb{R})$ for every $m$
	one observes that $(F^m\cdot S)_t= (F\cdot S)_{t\wedge \sigma_m}$
	and $(G^m\cdot\mathbb{S})_t= (G\cdot \mathbb{S})_{t\wedge \sigma_m}$ 
	converge pointwise to $(F\cdot S)_t$ and $(G\cdot \mathbb{S})_t$, respectively.
	However, since $(F^m\cdot S)$ and $(G^m\cdot\mathbb{S})$ are martingales by the first step and 
	$\lambda +(F^m\cdot S)_t+(G^m\cdot\mathbb{S})_t\geq 0$ on $\Xi$ for all $t$, 
	it follows from Fatou's lemma that $(F\cdot S)+(G\cdot\mathbb{S})$ is a supermartingale.

	Finally, let $\lambda\geq 0$, $(F^n)$ a sequence in $\mathcal{H}_s(H,\mathbb{R})$, and 
	$(G^n)$ a sequence in $\mathcal{H}_s(\mathbb{R},\mathbb{R})$ such that 
        $\lambda+(F^n\cdot S)_t+(G^n\cdot \mathbb{S})_t\geq 0$ on $\Xi$ for all $t$ and
	$\lambda+ \liminf_n\big( (F^n\cdot S)_T +(G^n\cdot \mathbb{S})_T \big)\geq X$ on $\Xi$.
	Since $(F^n\cdot S)+(G^n\cdot\mathbb{S})$ is a supermartingale by the previous arguments, it follows from 
	Fatou's lemma that
	\begin{align*}
	E_P[X]
	&\leq E_P\Big[\lambda+\liminf_n ( (F^n\cdot S)_T+(G^n\cdot\mathbb{S})_T ) \Big]\\
	&\leq  \liminf_n E_P[ \lambda +   (F^n\cdot S)_T+(G^n\cdot\mathbb{S})_T ] 
	\leq \lambda.
	\end{align*}
	As $\lambda$ was arbitrary, this shows $E_P[X]\leq\mathcal{E}(X)$.
\end{proof}

\subsection{Proof of Theorem~\ref{thm:integral}}
The goal of this subsection is to prove Theorem~\ref{thm:integral}. Lemma~\ref{lem:integrals.squared}, though of elementary nature, is the key observation in what follows. More precisely, it is exactly in this Lemma that we see that adding the second order, i.e. integrals with respect to $\mathbb{S}$ in the definition of $\mathcal{E}$,  leads to a simple It\^o isometry and, with the help of a pathwise inequality, to a BDG-inequality. This, in turn, allows us to directly define stochastic integrals for typical paths which are continuous.
\begin{lemma}
	\label{lem:integrals.squared}
	For every simple integrand $F\in\mathcal{H}_s(H,K)$ there exists $\tilde{F} \in \mathcal{H}_s(H,\mathbb{R})$ such that
	for all $t\in[0,T]$ we have
	\[\|(F\cdot S)_t\|^2_K \leq (\tilde{F}\cdot S)_t+ (\|F\|^2_{L(H,K)}\cdot \mathbb{S})_t 
		+ (\|F\|^2_{L(H,K)}\cdot \langle S\rangle)_t.\]
	In particular, the following weak It\^o isometry holds true. For every $F\in\mathcal{H}_s(H,K)$, it holds that
	\[\mathcal{E}(\|(F\cdot S)_T\|^2_K)
	\leq \sup_{\omega\in \Xi}(\|F\|^2_{L(H,K)}\cdot \langle S\rangle)_T(\omega)
	=	\|F\|_{\mathcal{H}^\infty(H,K)}^2.\]
\end{lemma}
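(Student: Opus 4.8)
The plan is to prove the pathwise inequality first and then read off the weak It\^o isometry directly from the definition of $\mathcal{E}$. Write $F=\sum_n f_n 1_{(\tau_n,\tau_{n+1}]}$, abbreviate $I:=(F\cdot S)$ and $\Delta_n S:=S_{\tau_{n+1}\wedge t}-S_{\tau_n\wedge t}$, so that $I_t=\sum_n f_n\Delta_n S$. First I would check that the running integral satisfies $I_{\tau_{n+1}\wedge t}-I_{\tau_n\wedge t}=f_n\Delta_n S$ in all cases, so that, since $I_0=0$ and $\tau_{n(\omega)}(\omega)=T$ makes the sum effectively finite, the squared norm telescopes along the stopping times as
\[\|I_t\|_K^2=\sum_n\big(\|I_{\tau_{n+1}\wedge t}\|_K^2-\|I_{\tau_n\wedge t}\|_K^2\big)=\sum_n\big(2\langle I_{\tau_n\wedge t},f_n\Delta_n S\rangle_K+\|f_n\Delta_n S\|_K^2\big).\]

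The first (linear) part is the candidate integral against $S$. Using the adjoint $f_n^\ast$ and the fact that on the support of $\Delta_n S$ one has $\tau_n\le t$, hence $I_{\tau_n\wedge t}=I_{\tau_n}$ (while both sides vanish when $\tau_n>t$), I would rewrite $2\langle I_{\tau_n\wedge t},f_n\Delta_n S\rangle_K=\langle 2f_n^\ast I_{\tau_n},\Delta_n S\rangle_H$, where the integrand $2f_n^\ast I_{\tau_n}$ is $\mathcal{F}_{\tau_n+}$-measurable and $t$-independent. For the quadratic part I would bound $\|f_n\Delta_n S\|_K^2\le\|f_n\|_{L(H,K)}^2\|\Delta_n S\|_H^2$ and expand $\|\Delta_n S\|_H^2=\|S_{\tau_{n+1}\wedge t}\|_H^2-\|S_{\tau_n\wedge t}\|_H^2-2\langle S_{\tau_n\wedge t},\Delta_n S\rangle_H$. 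This is exactly where the second-order term is used: substituting $\|S_s\|_H^2=\mathbb{S}_s+\langle S\rangle_s$ converts the first difference into the $\|f_n\|_{L(H,K)}^2$-weighted increments of $\mathbb{S}$ and of $\langle S\rangle$, i.e.\ into $(\|F\|_{L(H,K)}^2\cdot\mathbb{S})_t$ and $(\|F\|_{L(H,K)}^2\cdot\langle S\rangle)_t$, while the leftover $-2\|f_n\|_{L(H,K)}^2\langle S_{\tau_n},\Delta_n S\rangle_H$ is again linear in $\Delta_n S$.

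Collecting the two linear contributions, I would set $\tilde F:=\sum_n\tilde f_n 1_{(\tau_n,\tau_{n+1}]}$ with $\tilde f_n:=2f_n^\ast I_{\tau_n}-2\|f_n\|_{L(H,K)}^2 S_{\tau_n}\in H\cong L(H,\mathbb{R})$; each $\tilde f_n$ is $\mathcal{F}_{\tau_n+}$-measurable, so $\tilde F\in\mathcal{H}_s(H,\mathbb{R})$, and by construction $(\tilde F\cdot S)_t$ equals the sum of both linear parts. This gives the claimed pathwise bound $\|I_t\|_K^2\le(\tilde F\cdot S)_t+(\|F\|_{L(H,K)}^2\cdot\mathbb{S})_t+(\|F\|_{L(H,K)}^2\cdot\langle S\rangle)_t$. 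I expect the main (though still elementary) obstacle to be precisely the bookkeeping that makes both integrands simultaneously $t$-independent and $\mathbb{F}_+$-adapted; the identity $\tau_n\wedge t=\tau_n$ on the support of each summand, together with the vanishing of $\Delta_n S$ for $t<\tau_n$, is what resolves it. Note that no finite-dimensionality of $K$ is needed here.

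For the weak It\^o isometry, set $\lambda:=\sup_{\omega\in\Xi}(\|F\|_{L(H,K)}^2\cdot\langle S\rangle)_T(\omega)$, which may be assumed finite. I would feed the strategies $F^n\equiv\tilde F\in\mathcal{H}_s(H,\mathbb{R})$ and $G^n\equiv\|F\|_{L(H,K)}^2\in\mathcal{H}_s(\mathbb{R},\mathbb{R})$, constant in $n$, into the definition of $\mathcal{E}$. Admissibility holds at every $t$: by the pathwise bound, nonnegativity of $\|I_t\|_K^2$, monotonicity of $\langle S\rangle$, and $\|F\|_{L(H,K)}^2\ge 0$, one gets $\lambda+(\tilde F\cdot S)_t+(\|F\|_{L(H,K)}^2\cdot\mathbb{S})_t\ge\lambda-(\|F\|_{L(H,K)}^2\cdot\langle S\rangle)_t\ge\lambda-(\|F\|_{L(H,K)}^2\cdot\langle S\rangle)_T\ge 0$ on $\Xi$. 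At $t=T$ the pathwise bound yields $\lambda+(\tilde F\cdot S)_T+(\|F\|_{L(H,K)}^2\cdot\mathbb{S})_T\ge\|I_T\|_K^2$ on $\Xi$. Hence $\mathcal{E}(\|(F\cdot S)_T\|_K^2)\le\lambda=\|F\|_{\mathcal{H}^\infty(H,K)}^2$, which finishes the proof.
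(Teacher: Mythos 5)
Your proposal is correct and follows essentially the same route as the paper's proof: your telescoping of $\|I_t\|_K^2$ along the stopping times is just a reorganization of the paper's expansion of the double sum $\sum_{n,m}\langle f_m\Delta_m S, f_n\Delta_n S\rangle_K$ into cross and diagonal terms, and it produces the identical integrand $\tilde F=\sum_n\bigl(2f_n^\ast (F\cdot S)_{\tau_n}-2\|f_n\|_{L(H,K)}^2 S_{\tau_n}\bigr)1_{(\tau_n,\tau_{n+1}]}$ after the same substitution $\|S_s\|_H^2=\mathbb{S}_s+\langle S\rangle_s$. The verification of admissibility via monotonicity of $t\mapsto(\|F\|_{L(H,K)}^2\cdot\langle S\rangle)_t$ and the choice of constant strategies in the definition of $\mathcal{E}$ likewise match the paper, including your (correct) observation that finite dimensionality of $K$ is not needed here.
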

\begin{proof}
	Fix a simple integrand $F=\sum_nf_n1_{(\tau_n,\tau_{n+1}]}\in\mathcal{H}_s(H,K)$. By definition, when denoting $f^*_n$ the adjoint operator of $f_n$, we see that
	\begin{align*}
	\|(F\cdot S)_t\|^2_K
	&=\sum_{n,m}\Big\langle f_m (S^t_{\tau_{m+1}}-S^t_{\tau_m}), f_n (S^t_{\tau_{n+1}}-S^t_{\tau_n})\Big\rangle_K\\
	& =2\sum_{m<n}\Big\langle f_n^\ast f_m (S^t_{\tau_{m+1}}-S^t_{\tau_m}),  S^t_{\tau_{n+1}}-S^t_{\tau_n} \Big\rangle_H 
	 +\sum_n \| f_n (S^t_{\tau_{n+1}}-S^t_{\tau_n}) \|_K^2,
	\end{align*}
	where all sums are finite by definition of simple integrands.
	Using the inequality 
	$\| f_n (S^t_{\tau_{n+1}}-S^t_{\tau_n}) \|_K^2 
	\le \|f_n\|^2_{L(H,K)} \|S^t_{\tau_{n+1}}-S^t_{\tau_n}\|^2_H$
	and since 
   \begin{align*}
	\|S^t_{\tau_{n+1}}-S^t_{\tau_n}\|^2_H&= \|S^t_{\tau_{n+1}}\|^2_H- \|S^t_{\tau_{n}}\|^2_H -2 \langle S^t_{\tau_n}, S^t_{\tau_{n+1}}-S^t_{\tau_n}\rangle_H \\
	&= \mathbb{S}^t_{\tau_{n+1}} - \mathbb{S}^t_{\tau_{n}} + \langle S\rangle^t_{\tau_{n+1}}- \langle S\rangle^t_{\tau_{n}}  -2 \langle S^t_{\tau_n}, S^t_{\tau_{n+1}}-S^t_{\tau_n}\rangle_H 
	\end{align*}
	it follows that
	\begin{align*}
	\|(F\cdot S)_t\|^2_K
	&\le 2\Big\langle \sum_{m<n} f_n^\ast f_m (S^t_{\tau_{m+1}}-S^t_{\tau_m}) - \|f_n\|^2_{L(H,K)} S^t_{\tau_n},  S^t_{\tau_{n+1}}-S^t_{\tau_n} \Big\rangle_H \\
	&\qquad +\sum_n  \|f_n\|^2_{L(H,K)}(\mathbb{S}^t_{\tau_{n+1}}-\mathbb{S}^t_{\tau_{n}})
	+\sum_n \|f_n\|^2_{L(H,K)}(\langle S\rangle^t_{\tau_{n+1}}-\langle S\rangle^t_{\tau_{n}})
	\\&=(\tilde{F}\cdot S)_t+ (\|F\|^2_{L(H,K)}\cdot \mathbb{S})_t + (\|F\|^2_{L(H,K)}\cdot \langle S\rangle)_t,
	\end{align*}
	where 
	\[\tilde{F}:=2\sum_n \Big(\sum_{m<n} f_n^\ast f_m (S_{\tau_{m+1}}-S_{\tau_m}) - \|f_n\|^2_{L(H,K)} S_{\tau_n}\Big)1_{(\tau_n,\tau_{n+1}]}
	\in\mathcal{H}_s(H,\mathbb{R}).\]

	For the second claim let 
	$\lambda:=\sup_{\omega\in \Xi}(\|F\|^2_{L(H,K)}\cdot \langle S\rangle)_T(\omega)$.
	Since $\tilde{F}\in\mathcal{H}_s(H,\mathbb{R})$, $\|F\|^2_{L(H,K)}\in\mathcal{H}_s(\mathbb{R},\mathbb{R})$,
	and 
	\begin{align*}
	&\lambda + (\tilde{F}\cdot S)_t+ (\|F\|^2_{L(H,K)}\cdot \mathbb{S})_t \geq \|(F\cdot S)_t\|_K^2
	\geq 0 \quad\text{for all }t\\
	&\lambda + (\tilde{F}\cdot S)_T+ (\|F\|^2_{L(H,K)}\cdot \mathbb{S})_T \geq \|(F\cdot S)_T\|_K^2,
	\end{align*}
	it follows that 
	$\mathcal{E}(\|(F\cdot S)_T\|_K^2)\leq \lambda=\|F\|_{\mathcal{H}^\infty(H,K)}^2$.
\end{proof}

We first recall an inequality from \cite{acciaio2013trajectorial} which turns out to be crucial for the proof of the weak BDG inequality. 
The connection between pathwise inequalities as in \eqref{inequality:martingale} and
martingale inequalities are studied in 
\cite{acciaio2013trajectorial,beiglbock2014martingale,beiglbock2015pathwise}.

\begin{lemma}
	\label{lem:bgd.path}
	Let $N\in\mathbb{N}$. Then for every $(x_0,\dots,x_N)\in\mathbb{R}^{N+1}$ it holds that
	\begin{equation}\label{inequality:martingale}
	\max_{0\leq n\leq N} x_n^2\leq 4 x_N^2-4\sum_{n=0}^{N-1}\big(\max_{0\leq i\leq n} x_i\big)(x_{n+1}-x_n).
	\end{equation}
\end{lemma}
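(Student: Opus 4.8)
The plan is to push everything through the running maximum $M_n:=\max_{0\le i\le n}x_i$, which is nondecreasing and has the \emph{record property}: if $M_{n+1}-M_n>0$ then necessarily $M_{n+1}=x_{n+1}$. The first step is the elementary pointwise estimate
\[
M_{n+1}^2-M_n^2=(M_{n+1}-M_n)(M_{n+1}+M_n)\le 2\,x_{n+1}\,(M_{n+1}-M_n),
\]
which is trivial when $M_{n+1}=M_n$ (both sides vanish) and which in the record case $M_{n+1}=x_{n+1}\ge M_n$ follows from $M_{n+1}+M_n\le 2x_{n+1}$. This is the only place the record property enters; no probabilistic input is used anywhere.

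Summing over $n=0,\dots,N-1$ and inserting the discrete integration-by-parts identity $x_{n+1}(M_{n+1}-M_n)=(x_{n+1}M_{n+1}-x_nM_n)-M_n(x_{n+1}-x_n)$, the telescoping part collapses and I would obtain
\[
M_N^2\le M_0^2+2\big(x_NM_N-x_0M_0\big)-2\sum_{n=0}^{N-1}M_n(x_{n+1}-x_n).
\]
Since $M_0=x_0$, the boundary contribution is $M_0^2-2x_0M_0=-x_0^2\le 0$, which I discard. A single application of Young's inequality $2x_NM_N\le \tfrac12 M_N^2+2x_N^2$ (i.e.\ $\tfrac12(M_N-2x_N)^2\ge0$) then lets me absorb $\tfrac12 M_N^2$ on the left and multiply by $2$, giving
\[
\Big(\max_{0\le n\le N}x_n\Big)^2=M_N^2\le 4x_N^2-4\sum_{n=0}^{N-1}\Big(\max_{0\le i\le n}x_i\Big)(x_{n+1}-x_n).
\]

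The step I expect to be the real obstacle is the left-hand side as printed. The displayed quantity is $\max_{0\le n\le N}x_n^2=(\max_{0\le n\le N}|x_n|)^2$, whereas the argument above, and in particular the right-hand side, whose integrand is the \emph{one-sided} running maximum $\max_{0\le i\le n}x_i$, controls only $\big(\max_{0\le n\le N}x_n\big)^2$. For sign-changing tuples these genuinely differ: for $(x_0,x_1,x_2)=(0,-1,0)$ the right-hand side equals $0$ while $\max_n x_n^2=1$, so the maximum of the squares is \emph{not} dominated by the displayed right-hand side. The natural repair is to run the identical three-step argument on the reflected tuple $(-x_n)$, which yields the companion bound $\big(\min_{0\le n\le N}x_n\big)^2\le 4x_N^2-4\sum_{n}(\min_{0\le i\le n}x_i)(x_{n+1}-x_n)$; since $\max_n x_n^2=\max\{(\max_n x_n)^2,(\min_n x_n)^2\}$, adding the two one-sided bounds produces a genuine maximum-of-squares estimate, but at the cost of the integrand $\max_{0\le i\le n}x_i+\min_{0\le i\le n}x_i$ and the constant $8$ in place of $4$. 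Accordingly I would read the statement with $\big(\max_{0\le n\le N}x_n\big)^2$ on the left, for which the three displays above constitute a complete proof; this one-sided form is exactly the maximal inequality needed in the subsequent weak BDG estimate, where $\|(F\cdot S)_t\|_K^2$ is dominated by a martingale transform plus an increasing term and only the one-sided running maximum of the former is integrated against $S$ and $\mathbb{S}$.
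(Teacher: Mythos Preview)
Your argument for the one-sided inequality
\[
\Big(\max_{0\le n\le N}x_n\Big)^2\le 4x_N^2-4\sum_{n=0}^{N-1}\Big(\max_{0\le i\le n}x_i\Big)(x_{n+1}-x_n)
\]
is correct and complete. The paper itself gives no argument at all---it simply cites \cite[Proposition~2.1]{acciaio2013trajectorial}---so your three-step computation (record inequality for $M_{n+1}^2-M_n^2$, Abel summation, Young's inequality to absorb $\tfrac12 M_N^2$) is in fact a full proof in the spirit of that reference.

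More importantly, your counterexample is valid and identifies a genuine error in the paper's statement of the lemma, not in your work. For $(x_0,x_1,x_2)=(0,-1,0)$ one has $\max_n x_n^2=1$ while the right-hand side is $0$, so the inequality with $\max_{0\le n\le N}x_n^2$ on the left is false for general real tuples. The cited trajectorial Doob inequality controls $(\max_n x_n)^2$; the two left-hand sides agree only for nonnegative sequences, and the hypothesis ``for every $(x_0,\dots,x_N)\in\mathbb{R}^{N+1}$'' is incompatible with the printed left-hand side. Your diagnosis that the lemma should be read with $(\max_n x_n)^2$ is the correct repair.

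You are also right about the downstream consequence: Proposition~\ref{prop:bdg.calE} applies the lemma to the real-valued sequence $x_n=(F\cdot S^t)_{\sigma_n^m}$ and explicitly claims a bound on $\max_n x_n^2$, which the one-sided version alone does not deliver. Your proposed fix---apply the identical argument to $(-x_n)$ to bound $(\min_n x_n)^2$ with integrand $\min_{0\le i\le n}x_i$, then combine---is the standard remedy; it produces two admissible simple strategies and, after adding, a bound on $\sup_t(F\cdot S)_t^2$ with constant $8$ in place of $4$. This does not affect any qualitative conclusion of the paper.
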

\begin{proof}
	This is \cite[Proposition 2.1]{acciaio2013trajectorial} and the remark afterwards.
\end{proof}
\begin{proposition}[Weak BDG inequality for simple integrands]
\label{prop:bdg.calE}
	Assume that $K$ is finite dimensional. Then for every $F\in\mathcal{H}_s(H,K)$ one has that
	\[ \mathcal{E}\Big( \sup_{t\in[0,T]} \|(F\cdot S)_t\|_K^2 \Big)
	\leq 4 \sup_{\omega\in\Xi}\big( \|F\|_{L(H,K)}^2\cdot \langle S\rangle\big)_T(\omega)
	=4 \|F\|_{\mathcal{H}^\infty(H,K)}^2.\]
\end{proposition}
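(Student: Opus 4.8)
The plan is to reduce the statement about the running supremum to the weak It\^o isometry established in Lemma~\ref{lem:integrals.squared}, using the pathwise maximal inequality of Lemma~\ref{lem:bgd.path} as the bridge. Fix a simple integrand $F=\sum_n f_n 1_{(\tau_n,\tau_{n+1}]}\in\mathcal{H}_s(H,K)$. Since $K$ is finite dimensional, I would choose an orthonormal basis $e_1,\dots,e_d$ of $K$ and write $(F\cdot S)_t=\sum_{j=1}^d (F^j\cdot S)_t\, e_j$, where $F^j:=\langle F(\cdot),e_j\rangle_K$ is a scalar-valued (i.e.\ $H$-valued after identification) simple integrand in $\mathcal{H}_s(H,\mathbb{R})$. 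Then $\|(F\cdot S)_t\|_K^2=\sum_{j=1}^d ((F^j\cdot S)_t)^2$, so it suffices to control $\mathcal{E}(\sup_t ((F^j\cdot S)_t)^2)$ componentwise and sum; by countable subadditivity (Proposition~\ref{prop:expectation.is.sigma.subadd}) and $\sum_j (\|F^j\|_{L(H,\mathbb{R})}^2\cdot\langle S\rangle)=(\|F\|_{L(H,K)}^2\cdot\langle S\rangle)$ (since $\|F\|_{L(H,K)}^2=\sum_j\|F^j\|^2$ on the relevant operator identification), the finite-dimensional bookkeeping collapses to the scalar case.

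For the scalar case, the key idea is that along the finitely many stopping times $\tau_0<\dots<\tau_N$ appearing in $F$ (recall each $\omega$ has $\tau_{n(\omega)}(\omega)=T$, so the sum is finite), the process $t\mapsto (F^j\cdot S)_t$ is \emph{piecewise a scalar multiple of $S$} between consecutive stopping times, hence its path maximum is attained among the partition points up to the martingale transform structure. I would apply Lemma~\ref{lem:bgd.path} to the values $x_n:=(F^j\cdot S)_{\tau_n\wedge t}$, which yields the pathwise bound
\[\sup_{t\in[0,T]}\big((F^j\cdot S)_t\big)^2\leq 4\big((F^j\cdot S)_T\big)^2-4\sum_{n}\big(\max_{i\leq n}(F^j\cdot S)_{\tau_i}\big)\big((F^j\cdot S)_{\tau_{n+1}}-(F^j\cdot S)_{\tau_n}\big),\]
where the supremum over all $t$ (not just partition points) is controlled because on each interval $(\tau_n,\tau_{n+1}]$ the integral is $f_n(S_t-S_{\tau_n})$, a continuous affine image of $S$, whose extreme values I can absorb into the partition-point estimate. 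The crucial feature is that the second term on the right is itself of the form $(\widehat F\cdot S)$ for an explicit simple integrand $\widehat F\in\mathcal{H}_s(H,\mathbb{R})$ — namely $\widehat F=-4\sum_n(\max_{i\leq n}(F^j\cdot S)_{\tau_i})f_n^j 1_{(\tau_n,\tau_{n+1}]}$ — because the running maximum is $\mathcal{F}_{\tau_n+}$-measurable, so this is again a legitimate trading strategy.

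Combining these, $\sup_t\|(F\cdot S)_t\|_K^2\leq 4\|(F\cdot S)_T\|_K^2+(\widehat F\cdot S)_T$ for a suitable $\widehat F\in\mathcal{H}_s(H,\mathbb{R})$, where the admissibility (nonnegativity on $\Xi$) required by the definition of $\mathcal{E}$ is furnished by the same pathwise inequality applied at each intermediate time. Applying $\mathcal{E}$, using its sublinearity and the fact that an added $(\widehat F\cdot S)_T$ term does not increase $\mathcal{E}$ (it can be folded into the admissible superhedging strategies at zero cost, exactly as in Lemma~\ref{lem:integrals.squared}), I get
\[\mathcal{E}\Big(\sup_{t}\|(F\cdot S)_t\|_K^2\Big)\leq 4\,\mathcal{E}\big(\|(F\cdot S)_T\|_K^2\big)\leq 4\sup_{\omega\in\Xi}(\|F\|_{L(H,K)}^2\cdot\langle S\rangle)_T(\omega),\]
the last step being precisely the weak It\^o isometry of Lemma~\ref{lem:integrals.squared}. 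The main obstacle I anticipate is the careful verification that the running-supremum over continuous time (rather than over the discrete stopping times) is genuinely dominated by the discrete maximal inequality together with an admissible strategy — one must check that on each interval the affine behaviour of $(F\cdot S)$ in $S$, combined with the continuity of $S$, does not produce a larger supremum than the endpoint analysis captures, and that all the resulting integrands remain in $\mathcal{H}_s(H,\mathbb{R})$ with the correct measurability so that the bound is expressible through legitimate elements entering the infimum defining $\mathcal{E}$.
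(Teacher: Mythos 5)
Your overall architecture is the right one and matches the paper's in outline: reduce to scalar components of $(F\cdot S)$, apply the pathwise Doob-type inequality of Lemma~\ref{lem:bgd.path} to generate an additional simple integrand, and invoke Lemma~\ref{lem:integrals.squared} to absorb the resulting term $4(F\cdot S)_T^2$ into an admissible strategy plus the constant $\lambda=4\|F\|_{\mathcal{H}^\infty(H,K)}^2$. However, there is a genuine gap at precisely the step you flag as the ``main obstacle'': you apply Lemma~\ref{lem:bgd.path} only along the stopping times $\tau_n$ of $F$ itself and claim the continuous-time supremum can be ``absorbed'' because $(F\cdot S)$ is affine in $S$ on each interval. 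This does not work, and your displayed inequality with $\sup_{t\in[0,T]}$ on the left-hand side is false as stated: Lemma~\ref{lem:bgd.path} only controls $\max_n (F^j\cdot S)_{\tau_n\wedge t}^2$, the maximum over the partition points. On $(\tau_n,\tau_{n+1}]$ one has $(F^j\cdot S)_t=(F^j\cdot S)_{\tau_n}+f^j_n(S_t-S_{\tau_n})$, and $\sup_{t\in(\tau_n,\tau_{n+1}]}f^j_n(S_t-S_{\tau_n})$ can exceed the values at \emph{both} endpoints by an arbitrary amount, since the path may spike and return between the stopping times. There is no pathwise inequality expressing the intra-interval supremum through partition-point data, so no single simple integrand $\widehat F$ built from your fixed partition can superhedge $\sup_{t\in[0,T]}(F^j\cdot S)_t^2$.

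The paper closes exactly this hole by exploiting the \emph{sequential} structure in the definition of $\mathcal{E}$: it chooses refining stopping-time partitions $(\sigma^m_n)_n$ containing the $(\tau_n)$, finite for each $m$ and with $\bigcup_m\{\sigma^m_n(\omega):n\}$ dense in $[0,T]$; for each fixed $m$, Lemma~\ref{lem:bgd.path} along $(\sigma^m_n)$ yields $\tilde H^m\in\mathcal{H}_s(H,\mathbb{R})$ with
\[
0\leq \max_{n}\,(F\cdot S^t)_{\sigma^m_n}^2\leq \lambda+(H^m\cdot S)_t+(G\cdot\mathbb{S})_t,
\qquad H^m:=4\tilde F+\tilde H^m,\quad G:=4\|F\|^2_{L(H,K)},
\]
which settles admissibility for each $m$ separately; then continuity of $t\mapsto (F\cdot S)_t$ together with density of the refinements gives $\sup_{t\in[0,T]}(F\cdot S)_t^2=\lim_m\max_n(F\cdot S^T)_{\sigma^m_n}^2\leq \lambda+\liminf_m\big((H^m\cdot S)_T+(G\cdot\mathbb{S})_T\big)$. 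The $\liminf$ over a \emph{sequence} of strategies is allowed in Definition~\ref{def:second-vovk}, and this is what delivers $\mathcal{E}(\sup_t (F\cdot S)_t^2)\leq\lambda$; your single-strategy version cannot. Two smaller remarks: your intermediate bound $\mathcal{E}(\sup_t\|(F\cdot S)_t\|_K^2)\leq 4\,\mathcal{E}(\|(F\cdot S)_T\|_K^2)$ is not derived as such in the paper --- everything must be folded into one (sequence of) admissible strategies before taking the infimum defining $\mathcal{E}$, although in this instance the witnesses do combine; and your identity $\|F\|_{L(H,K)}^2=\sum_j\|F^j\|_{L(H,\mathbb{R})}^2$ silently replaces the operator norm by the Hilbert--Schmidt norm (false already for the identity on $\mathbb{R}^2$), though the paper's own componentwise reduction is equally terse on this point. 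The essential missing idea in your proposal is the refining-partition/liminf argument.
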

\begin{proof}
	First assume that $K=\mathbb{R}$ and fix a simple integrand
	$F=\sum_{n} f_n1_{(\tau_n,\tau_{n+1}]}
	=\sum_n F_{\tau_{n+1}}1_{(\tau_n,\tau_{n+1}]}\in\mathcal{H}_s(H,K)$. 
	Let $\sigma_n^m$, $n,m \in \mathbb{N}$, be stopping times such that
	\[\{\tau_n(\omega):n\in\mathbb{N}\}\subset \{ \sigma^m_n(\omega) : n\in\mathbb{N}\}=: D^m(\omega),\]
	for every $m$ the set  $D^m(\omega)$ is finite and $D^m(\omega)\subset D^{m+1}(\omega)$,
	and the closure of $\cup_m D^m(\omega)$ equals $[0,T]$ for every $\omega\in\Omega$.
	Now fix some $m$ and note that 
	\[F=\sum_n F_{\sigma^m_{n+1}}1_{(\sigma^m_n,\sigma^m_{n+1}]}
	\quad\text{since } (\sigma^m_n) \text{ is finer than }(\tau_n).\]
	Now, for every $m$ define
	\[ \tilde{H}^m:=-4\sum_n \Big( \big(\max_{0\leq i\leq n} (F\cdot S)_{\sigma_i^m}\big)
	F_{\sigma^m_n}	\Big) 1_{(\sigma_n^m,\sigma_{n+1}^m]}  \in \mathcal{H}_s(H,\mathbb{R}).\]
	Then, by Lemma \ref{lem:bgd.path} (applied to ``$x_n=(F\cdot S^t)_{\sigma_n^m}$'') it holds for all $t\in[0,T]$ that
	\begin{align*}
	\max_{n\in\mathbb{N}} (F\cdot S^t)_{\sigma_n^m}^2
  	&\leq 4 (F\cdot S^t)_T^2 - 4\sum_{n\in\mathbb{N}}  \big(\max_{0\leq i\leq n} (F\cdot S^t)_{\sigma_i^m}\big)
	F_{\sigma^m_{n+1}}(S^t_{\sigma^m_{n+1}}-S^t_{\sigma^m_n})\\
	&= 4(F\cdot S)_t^2+ (\tilde{H}^m\cdot S)_t.	
	\end{align*}
	Furthermore, by Lemma \ref{lem:integrals.squared}, there exists $\tilde{F}\in\mathcal{H}_s(H,\mathbb{R})$ such that for every $t\in[0,T]$ we have 
	\[(F\cdot S)_t^2 \leq (\tilde{F}\cdot S)_t+ (\|F\|^2_{L(H,K)}\cdot \mathbb{S})_t 
		+ (\|F\|^2_{L(H,K)}\cdot \langle S\rangle)_t.\]	
Therefore, setting
	$H^m:=4\tilde{F}+\tilde{H}^m\in\mathcal{H}_s(H,\mathbb{R})$,\, %
	$G:=4\|F\|^2_{L(H,K)}\in\mathcal{H}_s(\mathbb{R},\mathbb{R})$, and 
$\lambda:=4\sup_{\omega\in\Xi} (\|F\|^2_{L(H,K)}\cdot \langle S\rangle)_T(\omega)$, we obtain for every $t\in[0,T]$ that
	\[0\leq \max_{n\in\mathbb{N}} (F\cdot S^t)_{\sigma_n^m}^2
	\leq \lambda + (H^m\cdot S)_t +(G\cdot \mathbb{S})_t. \]
	As $\cup_m\{\sigma^n_m:n\}$ is dense in $[0,T]$, we conclude that 
	\[ \max_{t\in[0,T]} (F\cdot S)_t^2
	=\lim_m \max_{n\in\mathbb{N}} (F\cdot S^T)_{\sigma_n^m}^2
	\leq  \lambda + \liminf_m\big( (H^m\cdot S)_T +(G\cdot \mathbb{S})_T \big). \]
	This shows that $\mathcal{E}(\sup_{t\in[0,T]} (F\cdot S)_t^2)\leq \lambda=4\|F\|_{\mathcal{H}^\infty(H,K)}^2$,
	and concludes the proof for $K=\mathbb{R}$.

	If $K$ is finite dimensional (say with orthonormal basis $\{k_1,\dots,k_d\}$), one has 
	$\|(F\cdot S)_t\|_K^2=\sum_{i=1}^d (F^i\cdot S)_t^2$ with $F^i=\langle F,k_i\rangle_K$.
	Therefore, one can apply the previous step to every $F^i$ to obtain the desired result also when $K$ is finite dimensional.	
\end{proof}

\begin{proof}[\textbf{Proof of Theorem \ref{thm:integral}}]
	Fix $F\in\mathcal{H}^\infty(H,K)$ and a sequence $(F^n)$ in $\mathcal{H}_s(H,K)$ such that
	$\|F-F^n\|_{\mathcal{H}^\infty(H,K)}\to 0$. 
	Now Proposition \ref{prop:bdg.calE} implies that
	\[\mathcal{E}\Big(\sup_{t\in[0,T]}\|(F^n\cdot S)_t-(F^m\cdot S)_t\|_K^2\Big)
	\leq 4\|F^n-F^m\|_{\mathcal{H}^\infty(H,K)}^2.\]
	Therefore Proposition \ref{prop:L2-Banach-Banach} (applied to the Banach space $B:=C([0,T],K)$)
	implies the existence of a limit $(F\cdot S)\colon \Omega\to B$ such that
	\[\mathcal{E}\Big(\sup_{t\in[0,T]}\|(F\cdot S)_t-(F^n\cdot S)_t\|_K^2\Big)
	\to 0. \]

	The proof that (for typical paths) $(F\cdot S)$ does not depend on the choice of the sequence $(F^n)$
	which converges to $F$ and that the BDG inequality extends to $F\in\mathcal{H}^\infty(H,K)$ follows
	from the triangle inequality by standard arguments. 
	Moreover, we derive from the well-known $L^2(P)$-limit procedure for the construction of the classical stochastic integral (see, e.g., \cite{philip2004stochastic}) and Lemma~\ref{lem:weak.duality} that indeed, the constructed  stochastic integral for typical paths coincides with the classical stochastic integral under every martingale measure
	$P\in\mathcal{M}(\Xi)$.
\end{proof}

\begin{remark}
\label{rem:finite-dim2}
By arguing like in the proof of Theorem~\ref{thm:integral}, but using the weak It\^o isometry introduced in Lemma~\ref{lem:integrals.squared} instead of the weak BDG-inequality defined in Proposition~\ref{prop:bdg.calE}, we can define stochastic integrals with respect to integrands $F \in \mathcal{H}^\infty(H,K)$ without imposing that $K$ is finite dimensional. Moreover, using standard arguments involving the triangle inequality, we see that the weak It\^o isometry introduced in Lemma~\ref{lem:integrals.squared} for simple integrands in $\mathcal{H}_s(H,K)$ also holds true for integrands in  $\mathcal{H}^\infty(H,K)$. However, it remains open if the stochastic integral with respect to integrands in $\mathcal{H}^\infty(H,K)$ possess a continuous modification, since the weak It\^o isometry, compared to the weak BDG inequality whose proof depends on the fact that $K$ is finite dimensional, is too weak to guarantee that the sequence of simple integrals converges uniformly.
\end{remark}

\subsection{Duality Result for Second-order Vovk's outer measure}
\label{subsec:duality}
The goal of this subsection is to provide a duality result for the outer measure $\mathcal{E}$.
\begin{theorem}\label{thm:dual}
	Assume that $\Xi=\{\omega\in \Omega : (\omega,\langle\omega\rangle)\in\bar{\Xi} \}$
	for a subset $\bar{\Xi}\subset C([0,T],H)\times C([0,T],\mathbb{R})$ which satisfies
	\begin{itemize}
		\item[(i)]
		$\bar{\Xi}$ is the countable union of compact sets (w.r.t.~uniform convergence),
		\item[(ii)] $\bar{\omega}(\cdot\wedge t)\in\bar{\Xi}$ for all $\bar{\omega}\in\bar{\Xi}$ and $t\in[0,T]$,
		\item[(iii)] $\nu(0)=0$ and $\nu$ is nondecreasing for all $\bar{\omega}=(\omega,\nu)\in\bar{\Xi}$.
	\end{itemize}
	Then for every $X\colon C([0,T],H)\to[0,+\infty]$ which is the pointwise limit of an increasing sequence $X_n\colon\Omega\to[0,+\infty]$ of upper semicontinuous functions one has
	\begin{equation}
	\label{eq:duality}
	\mathcal{E}(X)=\sup_{P\in\mathcal{M}(\Xi)} E_P[X].
	\end{equation}
	In particular, the duality \eqref{eq:duality} holds for every nonnegative upper or lower semicontinuous function $X\colon C([0,T],H)\to[0,+\infty]$. 
\end{theorem}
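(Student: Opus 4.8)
The easy inequality $\mathcal{E}(X)\ge \sup_{P\in\mathcal{M}(\Xi)}E_P[X]$ is already contained in Lemma~\ref{lem:weak.duality}, so the entire content lies in the reverse inequality, and it suffices to prove that. My plan is to establish it first for bounded continuous $X$, then upgrade successively to bounded upper semicontinuous $X$, and finally to the stated class of pointwise increasing limits of upper semicontinuous functions.

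The natural framework is to lift everything to the product space $\bar\Omega:=C([0,T],H)\times C([0,T],\mathbb{R})$ via $\iota(\omega):=(\omega,\langle\omega\rangle)$, on which I write $\bar S$ and $\nu$ for the two coordinate processes and set $\bar{\mathbb{S}}_t:=\|\bar S_t\|_H^2-\nu_t$. Trading in $S$ and $\mathbb{S}$ on $\Xi$ is then literally trading in the two coordinate assets $\bar S,\bar{\mathbb{S}}$ on $\bar\Xi$, so that $\mathcal{E}(X)$ equals the pathwise superhedging functional on $\bar\Xi$ for the payoff $\bar X:=X\circ\pi$, with $\pi$ the projection onto the first coordinate. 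The key observation is that the measures on $\bar\Xi$ under which both $\bar S$ and $\bar{\mathbb{S}}$ are martingales are exactly the images under $\iota$ of the elements of $\mathcal{M}(\Xi)$: if $\bar S$ and $\bar{\mathbb{S}}$ are $Q$-martingales, then $\langle\bar S\rangle^Q-\nu$ is a continuous martingale of finite variation starting at $0$ (here assumption (iii) guarantees $\nu(0)=0$ and that $\nu$ is nondecreasing), hence vanishes, so $\nu=\langle\bar S\rangle^Q$ $Q$-almost surely and $Q$ projects to a martingale measure supported on $\Xi$; the converse uses that the pathwise quadratic variation agrees $P$-almost surely with the probabilistic one. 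This recasts \eqref{eq:duality} as a superhedging duality on $\bar\Xi$ and places us precisely in the setting of \cite{bartl2017pathwise}, whose structural hypotheses are exactly (i)--(iii).

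For the reverse inequality on bounded continuous $X$ I would argue on each compact piece $\bar\Xi_N$ of the decomposition $\bar\Xi=\bigcup_N\bar\Xi_N$ afforded by (i). On the compact metric space $\bar\Xi_N$ the set of probability measures is weak-$\ast$ compact, and the martingale constraints for $\bar S$ and $\bar{\mathbb{S}}$ are weak-$\ast$ closed (and yield genuine, not merely local, martingales by boundedness), so the corresponding set of martingale measures is weak-$\ast$ compact; applying Sion's minimax theorem to $(Q,(F,G))\mapsto E_Q[\bar X-(F\cdot\bar S)_T-(G\cdot\bar{\mathbb{S}})_T]$ interchanges the infimum over simple strategies with the supremum over measures and yields the duality on $\bar\Xi_N$. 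Letting $N\to\infty$, a stopping argument based on the stability property (ii) (stopping a path on first exit from $\bar\Xi_N$ keeps it in $\bar\Xi$), together with inner regularity, transports the finite-level dualities to $\bar\Xi$ and gives $\mathcal{E}(X)=\sup_{P\in\mathcal{M}(\Xi)}E_P[X]$ for bounded continuous $X$.

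It remains to remove continuity and boundedness. A bounded upper semicontinuous $X$ is a decreasing limit of bounded continuous $X_n\downarrow X$; then $\mathcal{E}(X)\le\inf_n\mathcal{E}(X_n)=\inf_n\sup_P E_P[X_n]$, and the weak-$\ast$ compactness of the martingale measures on each $\bar\Xi_N$ allows the Dini-type interchange $\inf_n\sup_P E_P[X_n]=\sup_P E_P[X]$, settling the upper semicontinuous case. For general $X$ in the stated class write $X=\sup_n X_n$ with $X_n$ upper semicontinuous; by monotone convergence and the sup--sup interchange, $\sup_P E_P[X]=\sup_n\sup_P E_P[X_n]=\sup_n\mathcal{E}(X_n)$, so the claim $\mathcal{E}(X)=\sup_P E_P[X]$ reduces to showing that $\mathcal{E}$ is continuous from below along $(X_n)$. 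This continuity from below is the genuine obstacle: it cannot be read off from countable subadditivity, and a direct diagonalization of the superhedges fails because the $\liminf_n$ in Definition~\ref{def:second-vovk} is only pointwise in $\omega$, so the approximating index cannot be chosen uniformly. I would obtain it from a Choquet capacitability argument for the capacity $\mathcal{E}$, as carried out in \cite{bartl2017pathwise}, after reducing to bounded $X$ by the truncation $X\wedge m\uparrow X$ and monotone convergence on both sides.
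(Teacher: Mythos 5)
Your reduction to the enlarged space, the identification of martingale measures for $(\bar S,\bar{\mathbb S})$ with lifts of $\mathcal{M}(\Xi)$ (this is the content of Lemma~\ref{lem:transfer}, via \cite{bartl2017pathwise}), and the compact-level duality on the pieces $\bar\Xi_n$ of Lemma~\ref{lem:choose.compacts} all match the paper's strategy. But the final step contains a genuine gap, and it is precisely at the heart of the theorem. You assert that a direct diagonalization of the superhedges ``fails because the $\liminf_n$ in Definition~\ref{def:second-vovk} is only pointwise in $\omega$, so the approximating index cannot be chosen uniformly,'' and you substitute an appeal to ``a Choquet capacitability argument for the capacity $\mathcal{E}$.'' This gets the situation backwards. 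The pointwise $\liminf$ over a \emph{sequence} of strategies in Definition~\ref{def:second-vovk} is exactly what makes the diagonalization work, and the paper's proof is that diagonalization: for $\bar X=\sup_n\bar X_n$ with $m:=\sup_n\bar{\mathcal{E}}_n(\bar X_n)<+\infty$, one picks for each $n$ a finite simple superhedge of $\bar X_n$ on $\bar\Delta\cap\bar\Xi_n$ at cost $m+\varepsilon/2$ as in \eqref{eq:strategy.for.mathcalEn}, stops it at the time $\bar\sigma_n$ when the wealth hits $-\varepsilon/2$ (making the stopped strategies \emph{uniformly} admissible at level $-\varepsilon$ on all of $\bar\Omega$), and then uses Lemma~\ref{lem:integral.over.time.geq0} --- which rests on the stopping-stability (ii) --- to conclude $\bar\sigma_n=T$ on $\bar\Delta\cap\bar\Xi_n$, so the stopped strategies still superhedge $\bar X_n$ there. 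Since the $\bar\Xi_n$ increase to $\bar\Xi$, every path eventually lies in all $\bar\Xi_n$, and the pointwise $\liminf_n$ of the stopped wealths dominates $\liminf_n\bar X_n=\bar X$, giving $\bar{\mathcal{E}}(\bar X)\le m+\varepsilon$. Your replacement does not deliver this: Choquet's capacitability theorem \emph{presupposes} continuity from below of the capacity along the relevant increasing sequences, which is the very property you need to prove, and your truncation $X\wedge m\uparrow X$ ``with monotone convergence on both sides'' is circular for the same reason. You never identify the two ingredients that make the paper's argument run: the uniform cost bound $m$ across $n$, and the stopping-at-negative-wealth device together with Lemma~\ref{lem:integral.over.time.geq0}.

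A secondary soft spot: your intermediate upgrade from continuous to bounded upper semicontinuous $X$ invokes a Dini-type interchange $\inf_n\sup_P E_P[X_n]=\sup_P E_P[X]$ justified by ``weak-$\ast$ compactness of the martingale measures on each $\bar\Xi_N$,'' but the supremum there runs over $\mathcal{M}(\Xi)$ (equivalently $\bar{\mathcal{M}}(\bar\Xi)$), which is \emph{not} compact --- its elements need not be supported on any single $\bar\Xi_N$. The paper sidesteps this by establishing the duality for bounded upper semicontinuous payoffs already at the compact level (the representation \eqref{rep:n}, following steps (a)--(c) of \cite[Theorem 2.2]{bartl2017pathwise}), and only then passing to $\bar\Xi$ via the stopping argument; your ordering (pass to $\bar\Xi$ first for continuous $X$, then take decreasing limits on the non-compact level) breaks exactly where compactness is lost. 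Both issues would be repaired by adopting the paper's order of quantifiers: compact-level u.s.c.\ duality first, then the uniform-admissibility stopping diagonalization for the increasing limit.
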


{Note that the prediction set $\Xi_c$ defined in \eqref{Xi-c} satisfies the conditions (i)--(iii).} Moreover, up to a different admissibility condition in the definition of $\mathcal{E}$, the
statement of Theorem \ref{thm:dual} is similar to \cite[Theorem 2.2]{bartl2017pathwise}. 
However, the additional assumption that $\bar{\Xi}$ contains all stopped paths, can be used as
in the proof of \cite[Theorem~2.1]{bartl2017duality} to deduce Theorem \ref{thm:dual}
in the present setting. Therefore, we only provide a sketch of the proof
with main focus on the admissibility condition.
Let us recall the setting of \cite{bartl2017pathwise}. On 
\[\bar{\Omega}:=C([0,T],H)\times C([0,T],\mathbb{R})\] 
we consider the processes $\bar{S}_t(\bar{\omega}):=\omega(t)$ and 
$\bar{\mathbb{S}}_t(\bar{\omega}):=\|\omega(t)\|_H^2-\nu(t)$ for 
$\bar{\omega}=(\omega,\nu)\in\bar{\Omega}$.
For
$\bar{\Delta}:=\{\bar{\omega}\in\bar{\Omega} : \omega\in \Omega\text{ and }\langle\omega\rangle=\nu\}$, we consider the filtration
$\bar{\mathbb{F}}_+^{\bar{\Delta}}$ defined as the right-continuous version of
$\bar{\mathcal{F}}_t^{\bar{\Delta}}=\sigma(\bar{S}_s,\bar{\mathbb{S}}_s:s\leq t)\vee \sigma( \bar{N}: \bar{N}\subset \bar{\Omega}\setminus\bar{\Delta})$.
A function $\bar{F}$ on $[0,T]\times\bar{\Omega}$ with values in $L(H,\mathbb{R})$ or $L(\mathbb{R},\mathbb{R})$ is called simple if it is of the form
\[\bar{F}_t(\bar{\omega})=\sum_{n\in\mathbb{N}} \bar{f}_n(\bar{\omega})
1_{( \bar{\tau}_n(\bar{\omega}),\bar{\tau}_{n+1}(\bar{\omega})]}(t)\]
for $(t,\bar{\omega})\in [0,T]\times\bar{\Omega}$, where $0=\bar{\tau}_0\leq\dots\leq \bar{\tau}_n\leq\bar{\tau}_{n+1}\leq \dots\leq T$ are $\bar{\mathbb{F}}^{\bar{\Delta}}_+$-stopping times such that 
for each $\bar{\omega}$ only finitely many stopping times are strictly smaller than $T$, and
$\bar{f}_n$ are $\bar{\mathcal{F}}^{\bar{\Delta}}_{\bar{\tau}_n+}$-measurable functions on $\bar\Omega$ with values in $L(H,\mathbb{R})$ and $L(\mathbb{R},\mathbb{R})$, respectively. The function
$\bar{F}$ is called finite simple, if $\bar{\tau}_n=T$  for all $n\geq N$ for some $N\in\mathbb{N}$. Consider the functional
	\[  \bar{\mathcal{E}}(\bar{X}):=
	\inf\left\{ \lambda\geq 0 \,:\, 
	\begin{array}{l}
	\text{there are two simple sequences $(\bar{H}^n)$, $(\bar{G}^n)$ such that}\\ 
	\text{$\lambda+(\bar{H}^n\cdot \bar{S})_t+(\bar{G}^n\cdot \bar{\mathbb{S}})_t\geq 0$ 
	on $\bar{\Delta}\cap\bar{\Xi}$ for all $t$ and $n$, and} \\
	\text{$\lambda+ \liminf_n\big( (\bar{G}^n\cdot\bar{S})_T +(\bar{G}^n\cdot \bar{\mathbb{S}})_T \big)\geq \bar{X}$ on $\bar{\Delta}\cap\bar{\Xi}$}
	\end{array}\!\right\}\]
for $\bar{X}\colon\bar{\Omega}\to[-\infty,+\infty]$.
Further, for a measurable set $\bar{A}\subset\bar{\Omega}$, we denote by
$\bar{\mathcal{M}}(\bar{A})$ the set of all Borel probabilities $\bar{P}$ on $\bar{\Omega}$ such that
$\bar{P}(\bar{A})=1$ and both $\bar{S}$ and $\bar{\mathbb{S}}$ are $\bar{P}$-martingales w.r.t.~the filtration
$\bar{\mathbb{F}}_+^{\bar{\Delta}}$.

The reason to consider the enlarged space $\bar{\Omega}$ is that the duality argument in the 
proof of Theorem \ref{thm:dual} builds on topological arguments and the set $\Xi$ in contrast to $\bar{\Xi}$ is not regular enough. The following transfer principle is the reason why duality on $\Omega$ can be recovered from
duality on the enlarged space $\bar{\Omega}$.

\begin{lemma}
	\label{lem:transfer}
	For every measurable function  $X\colon C([0,T],H)\to[0,+\infty]$ one has
	\[ \mathcal{E}(X)=\bar{\mathcal{E}}(X\circ\bar{S})
	\quad\text{and}\quad
	\sup_{P\in\mathcal{M}(\Xi)} E_P[X]
	=\sup_{\bar{P}\in\bar{\mathcal{M}}(\bar{\Xi})} E_{\bar{P}}[X\circ \bar{S}]. \]
\end{lemma}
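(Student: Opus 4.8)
The plan is to transport everything through the canonical embedding $\iota\colon\Omega\to\bar{\Omega}$, $\iota(\omega):=(\omega,\langle\omega\rangle)$. First I would record its structural properties: $\iota$ is a bijection from $\Omega$ onto $\bar{\Delta}$ with $\bar{S}\circ\iota=S$ and $\bar{\mathbb{S}}\circ\iota=\mathbb{S}$ (since $\nu=\langle\omega\rangle$ on $\bar{\Delta}$), and it restricts to a bijection $\Xi\to\bar{\Delta}\cap\bar{\Xi}$, which is immediate from $\Xi=\{\omega\in\Omega:(\omega,\langle\omega\rangle)\in\bar{\Xi}\}$. The decisive measurability point is that on $\bar{\Delta}$ the process $\bar{\mathbb{S}}$ is a deterministic functional of $\bar{S}$ — because $\langle\omega\rangle_t$ is $\mathcal{F}_t$-measurable — so $\iota$ identifies $\sigma(\bar{S}_s,\bar{\mathbb{S}}_s:s\le t)$ restricted to $\bar{\Delta}$ with $\mathcal{F}_t$, while the enlargement by $\sigma(\bar{N}:\bar{N}\subset\bar{\Omega}\setminus\bar{\Delta})$ leaves $\bar{\mathbb{F}}^{\bar{\Delta}}_+$-adapted objects entirely free off $\bar{\Delta}$. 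Consequently $\iota$ sets up a correspondence between $\mathbb{F}_+$-stopping times and $\mathcal{F}_{\tau_n+}$-measurable coefficients on $\Omega$ and their $\bar{\mathbb{F}}^{\bar{\Delta}}_+$-analogues on $\bar{\Omega}$, i.e.\ between $\mathcal{H}_s(H,\mathbb{R})\times\mathcal{H}_s(\mathbb{R},\mathbb{R})$ and the $\bar{S}$- and $\bar{\mathbb{S}}$-simple integrands.

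For the first equality $\mathcal{E}(X)=\bar{\mathcal{E}}(X\circ\bar{S})$ I would prove both inequalities by lifting and restricting admissible hedges. Given admissible $\lambda,(F^n),(G^n)$ for $X$ on $\Xi$, extend each $F^n,G^n$ to $\bar{\mathbb{F}}^{\bar{\Delta}}_+$-simple integrands via the correspondence above; since $\bar{S}=S$ and $\bar{\mathbb{S}}=\mathbb{S}$ along $\iota$, the pathwise integrals agree on $\bar{\Delta}$, and the superhedging inequalities, which only need to hold on $\bar{\Delta}\cap\bar{\Xi}=\iota(\Xi)$, transfer verbatim, giving $\bar{\mathcal{E}}(X\circ\bar{S})\le\lambda$. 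The reverse inequality is symmetric: restricting an admissible hedge for $X\circ\bar{S}$ on $\bar{\Delta}\cap\bar{\Xi}$ along $\iota$ yields an admissible hedge for $X$ on $\Xi$, using that $\bar{\mathbb{F}}^{\bar{\Delta}}_+$-stopping times and adapted coefficients restrict on $\bar{\Delta}$ to $\mathbb{F}_+$-adapted data. Taking infima over $\lambda$ gives the equality.

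For the second equality I would use push-forward of measures. For ``$\le$'', take $P\in\mathcal{M}(\Xi)$ and set $\bar{P}:=\iota_\ast P$; then $\bar{P}(\bar{\Xi})=P(\Xi)=1$, $\bar{P}(\bar{\Delta})=P(\Omega)=1$, and $E_{\bar{P}}[X\circ\bar{S}]=E_P[X]$. It remains to verify $\bar{P}\in\bar{\mathcal{M}}(\bar{\Xi})$, i.e.\ that $\bar{S},\bar{\mathbb{S}}$ are $\bar{P}$-martingales: $\bar{S}$ inherits this from $S$, while for $\bar{\mathbb{S}}$ one invokes that under any $P$ making $S$ a martingale the pathwise quadratic variation $\langle\omega\rangle$ coincides $P$-a.s.\ with the stochastic quadratic variation of $S$, so that $\mathbb{S}=\|S\|_H^2-\langle\omega\rangle$ is a $P$-martingale; as $\bar{P}$ sits on $\bar{\Delta}$, the enlargement in $\bar{\mathbb{F}}^{\bar{\Delta}}_+$ consists of $\bar{P}$-null sets and does not affect the martingale property. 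For ``$\ge$'', take $\bar{P}\in\bar{\mathcal{M}}(\bar{\Xi})$; the same identification applied under $\bar{P}$ forces $\nu=\langle\omega\rangle$ $\bar{P}$-a.s., hence $\bar{P}(\bar{\Delta})=1$, so $\bar{P}=\iota_\ast P$ with $P:=(\iota^{-1})_\ast\bar{P}\in\mathcal{M}(\Xi)$ and the same integral of $X$.

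The step I expect to be the main obstacle is the quadratic-variation identification underlying the martingale equality: that $\mathbb{S}$ is a martingale under every $P\in\mathcal{M}(\Xi)$ and, conversely, that every $\bar{P}\in\bar{\mathcal{M}}(\bar{\Xi})$ is concentrated on $\bar{\Delta}$. This rests on the a.s.\ coincidence of the dyadic pathwise quadratic variation of the continuous martingale $S$ with its stochastic quadratic variation (together with the integrability afforded by the prediction set), for which I would rely on the arguments in \cite{bartl2017pathwise,bartl2017duality}. The filtration enlargement by subsets of $\bar{\Omega}\setminus\bar{\Delta}$, while routine, also requires care in the integrand correspondence, since it is precisely what makes $\bar{\mathbb{F}}^{\bar{\Delta}}_+$-adaptedness on $\bar{\Delta}$ agree with $\mathbb{F}_+$-adaptedness.
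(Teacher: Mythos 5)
Your route through the embedding $\iota(\omega)=(\omega,\langle\omega\rangle)$ is exactly the mechanism behind the paper's proof, which consists of invoking \cite[Lemmas 4.5 and 4.6]{bartl2017pathwise}: the hedge correspondence for the first equality (trace of $\bar{\mathcal{F}}^{\bar{\Delta}}_t$ on $\bar{\Delta}$ equals the pullback of $\mathcal{F}_t$ because $\bar{\mathbb{S}}$ is a functional of $\bar{S}$ on $\bar{\Delta}$, and the enlargement by subsets of $\bar{\Omega}\setminus\bar{\Delta}$ permits free extension of integrands off $\bar{\Delta}$) is correct and is the same argument as in the cited lemma, as is your ``$\ge$'' direction of the measure equality, where Doob--Meyer uniqueness together with the a.s.\ identification of the dyadic pathwise quadratic variation forces $\nu=\langle\omega\rangle$ $\bar{P}$-a.s., hence $\bar{P}(\bar{\Delta})=1$.

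There is, however, a genuine gap in your ``$\le$'' direction of the second equality. You set $\bar{P}:=\iota_\ast P$ and justify $\bar{P}\in\bar{\mathcal{M}}(\bar{\Xi})$ by claiming that $\mathbb{S}=\|S\|_H^2-\langle S\rangle$ is a $P$-martingale for every $P\in\mathcal{M}(\Xi)$. The quadratic-variation identification only makes $\mathbb{S}$ a \emph{local} $P$-martingale; a true martingale requires $E_P[\langle S\rangle_T]<+\infty$, i.e.\ that $S$ be an $L^2$-martingale, and this is \emph{not} ``afforded by the prediction set'': $\mathcal{M}(\Xi)$ only demands that $S$ be a martingale, and condition (i) of Theorem \ref{thm:dual} ($\sigma$-compactness of $\bar{\Xi}$) bounds $\nu_T$ on each compact piece but not uniformly, so there exist $P\in\mathcal{M}(\Xi)$ under which $\mathbb{S}$ is a strict local martingale and $\iota_\ast P\notin\bar{\mathcal{M}}(\bar{\Xi})$. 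The stated equality of suprema survives, but it needs an approximation rather than an exact pushforward: stop at $\rho_m:=\inf\{t:\langle S\rangle_t\geq m\}\wedge T$, observe that $\omega(\cdot\wedge\rho_m(\omega))\in\Xi$ pathwise by condition (ii), that the stopped process is an $L^2$-bounded martingale so the law $P^m$ of the stopped path satisfies $\iota_\ast P^m\in\bar{\mathcal{M}}(\bar{\Xi})$, and then use that $\rho_m(\omega)=T$ for $m$ large (finiteness of $\langle\omega\rangle_T$ on $\Omega$), so that $X(\omega^{\rho_m})\to X(\omega)$ pointwise and Fatou's lemma gives $\sup_{\bar{P}\in\bar{\mathcal{M}}(\bar{\Xi})}E_{\bar{P}}[X\circ\bar{S}]\geq\liminf_m E_{P^m}[X]\geq E_P[X]$ for $X\geq 0$. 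Note that for the paper's main application $\Xi\equiv\Xi_c$ of \eqref{Xi-c} this issue disappears, since there $\langle S\rangle_T\leq cT$ uniformly and your exact correspondence does hold.
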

\begin{proof}
	The proof of the first equality follows directly from \cite[Lemma~4.5]{bartl2017pathwise}, whereas the second equality is  a direct consequence of \cite[Lemma 4.6]{bartl2017pathwise}.
\end{proof}

\begin{lemma}
	\label{lem:choose.compacts}
	There is an increasing sequence of nonempty compact sets $\bar{\Xi}_n\subset\bar{\Omega}$ such that 
	$\bar{\Xi}=\bigcup_n \bar{\Xi}_n$ and $\bar{\omega}(\cdot\wedge t)\in \bar{\Xi}_n$ for every 
	$\bar{\omega}\in \bar{\Xi}_n$ and $t\in[0,T]$. 
\end{lemma}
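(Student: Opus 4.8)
The plan is to start from the decomposition guaranteed by property~(i) and then enlarge each compact piece so that it becomes stable under the path-stopping operation, while keeping compactness. Write $\bar\Xi=\bigcup_n K_n$ with each $K_n\subset\bar\Omega$ compact for uniform convergence, and introduce the stopping map $\theta_t\colon\bar\Omega\to\bar\Omega$, $\theta_t(\bar\omega):=\bar\omega(\cdot\wedge t)$, i.e.\ $\theta_t(\omega,\nu)=(\omega(\cdot\wedge t),\nu(\cdot\wedge t))$. The crucial preliminary fact I would establish is that the map $[0,T]\times\bar\Omega\ni(t,\bar\omega)\mapsto\theta_t(\bar\omega)\in\bar\Omega$ is jointly continuous: if $t_k\to t$ and $\bar\omega_k\to\bar\omega$ uniformly, then splitting $\|\bar\omega_k(s\wedge t_k)-\bar\omega(s\wedge t)\|\le \|\bar\omega_k-\bar\omega\|_\infty+\|\bar\omega(s\wedge t_k)-\bar\omega(s\wedge t)\|$ and using that $\bar\omega$, being continuous on the compact interval $[0,T]$, is uniformly continuous, both terms vanish uniformly in $s$.

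Next I would define the saturated sets $\hat K_n:=\theta([0,T]\times K_n)=\{\bar\omega(\cdot\wedge t): t\in[0,T],\ \bar\omega\in K_n\}$. Since $[0,T]\times K_n$ is compact and $\theta$ is continuous, $\hat K_n$ is compact. By property~(ii), $\hat K_n\subseteq\bar\Xi$, while $\theta_T=\mathrm{id}$ gives $K_n\subseteq\hat K_n$, so $\bigcup_n\hat K_n=\bar\Xi$. The key point making $\hat K_n$ stable under stopping is the semigroup identity $\theta_s\circ\theta_t=\theta_{s\wedge t}$: for $\bar\eta=\theta_t(\bar\omega)\in\hat K_n$ and $s\in[0,T]$ one has $\theta_s(\bar\eta)=\theta_{s\wedge t}(\bar\omega)\in\hat K_n$ because $s\wedge t\in[0,T]$ and $\bar\omega\in K_n$.

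Finally I would pass to increasing finite unions $\bar\Xi_n:=\bigcup_{j\le n}\hat K_j$. A finite union of compacts is compact, stability under stopping is inherited ($\theta_s(\bar\eta)\in\hat K_j\subseteq\bar\Xi_n$ whenever $\bar\eta\in\hat K_j$ with $j\le n$), the sequence is increasing, and $\bigcup_n\bar\Xi_n=\bar\Xi$; discarding empty terms (and noting $\bar\Xi\ne\emptyset$) makes each term nonempty. The only genuinely delicate step is the joint continuity of $\theta$, which is what guarantees that compactness is preserved under saturation; everything else reduces to the elementary semigroup identity $\theta_s\circ\theta_t=\theta_{s\wedge t}$ together with properties~(i)–(ii). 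I do not expect property~(iii) to be needed in this argument.
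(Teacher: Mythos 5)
Your proof is correct and follows the same route as the argument the paper invokes only by citation (Lemma 4.5 of \cite{bartl2017duality}): saturate each compact piece $K_n$ under the stopping maps $\theta_t$, use joint continuity of $(t,\bar\omega)\mapsto\bar\omega(\cdot\wedge t)$ together with compactness of $[0,T]\times K_n$ to see that the saturation $\hat K_n$ remains compact, and pass to increasing finite unions. Your write-up is in fact more self-contained than the paper's deferred proof, and your two side remarks are accurate: stability under stopping reduces to the semigroup identity $\theta_s\circ\theta_t=\theta_{s\wedge t}$, and property (iii) plays no role in this lemma.
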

\begin{proof}
	The proof is similar to \cite[Lemma 4.5]{bartl2017duality}.
\end{proof}

\begin{lemma}
	\label{lem:integral.over.time.geq0}
	Assume that $\lambda + (\bar{G}\cdot \bar{S})_T+(\bar{H}\cdot\bar{\mathbb{S}})_T\geq 0$ on $\bar{\Xi}$, where  $\bar{G}$ and $\bar{H}$ are
	finite simple integrands.
	Then $\lambda + (\bar{G}\cdot \bar{S})_t+(\bar{G}\cdot\bar{\mathbb{S}})_t\geq 0$ on $\bar{\Xi}$
	for every $t\in[0,T]$.
\end{lemma}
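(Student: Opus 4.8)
The plan is to exploit the one structural hypothesis on $\bar\Xi$ that has not yet entered the argument, namely its stability under stopping (property (ii) of Theorem~\ref{thm:dual}, respectively Lemma~\ref{lem:choose.compacts}): for every $\bar\omega\in\bar\Xi$ and every $t\in[0,T]$ the stopped path $\bar\omega^t:=\bar\omega(\cdot\wedge t)$ again lies in $\bar\Xi$. Fixing $t\in[0,T]$ and $\bar\omega\in\bar\Xi$, I would apply the hypothesis at the path $\bar\omega^t$ to get
\[\lambda+(\bar G\cdot\bar S)_T(\bar\omega^t)+(\bar H\cdot\bar{\mathbb S})_T(\bar\omega^t)\geq 0,\]
and then show that its left-hand side equals $\lambda+(\bar G\cdot\bar S)_t(\bar\omega)+(\bar H\cdot\bar{\mathbb S})_t(\bar\omega)$. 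Since $t$ and $\bar\omega$ are arbitrary, this is precisely the claim.

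The identity $(\bar G\cdot\bar S)_T(\bar\omega^t)=(\bar G\cdot\bar S)_t(\bar\omega)$, and likewise for $\bar{\mathbb S}$, rests on two elementary observations. First, because $\bar\omega^t$ is constant on $[t,T]$ and because on the enlarged space both $\bar S_s(\bar\omega)=\omega(s)$ and $\bar{\mathbb S}_s(\bar\omega)=\|\omega(s)\|_H^2-\nu(s)$ are read off directly from the stopped coordinates, one has $\bar S_s(\bar\omega^t)=\bar S_{s\wedge t}(\bar\omega)$ and $\bar{\mathbb S}_s(\bar\omega^t)=\bar{\mathbb S}_{s\wedge t}(\bar\omega)$ for all $s$; in particular every increment of $\bar S$ or $\bar{\mathbb S}$ across $[t,T]$ vanishes at $\bar\omega^t$. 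This is exactly where passing to $\bar\Omega$ pays off, since on $\Omega$ the quadratic variation need not be compatible with stopping in this clean way. Second, since $\bar\omega$ and $\bar\omega^t$ coincide on $[0,t]$, adaptedness of the data forces $\bar\tau_n(\bar\omega^t)\wedge t=\bar\tau_n(\bar\omega)\wedge t$ for each stopping time and $\bar g_n(\bar\omega^t)=\bar g_n(\bar\omega)$, $\bar h_n(\bar\omega^t)=\bar h_n(\bar\omega)$ on the set $\{\bar\tau_n(\bar\omega)<t\}$. Combining these term by term, contributions with $\bar\tau_n(\bar\omega)\geq t$ drop out on both sides because the associated $\bar S$- and $\bar{\mathbb S}$-increments are zero, while contributions with $\bar\tau_n(\bar\omega)<t$ match verbatim the respective terms of $(\bar G\cdot\bar S)_t(\bar\omega)$ and $(\bar H\cdot\bar{\mathbb S})_t(\bar\omega)$. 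Finiteness of the simple integrands guarantees that all sums are finite, so no limiting argument is needed.

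The main obstacle is the second observation, i.e.\ the stopping-time bookkeeping. Verifying $\bar\tau_n(\bar\omega^t)\wedge t=\bar\tau_n(\bar\omega)\wedge t$ requires checking separately that the stopping time fires at the same instant on $\{\bar\tau_n(\bar\omega)\leq t\}$, where the two paths still agree up to $\bar\tau_n(\bar\omega)$, and that it cannot fire before $t$ on $\{\bar\tau_n(\bar\omega)>t\}$. The slight care needed here stems from working with the right-continuous filtration $\bar{\mathbb F}_+^{\bar\Delta}$ and with $\bar{\mathcal F}^{\bar\Delta}_{\bar\tau_n+}$-measurable coefficients, so that the germ of the path immediately after $\bar\tau_n$ enters; it is the \emph{strict} inequality $\bar\tau_n(\bar\omega)<t$ that supplies the small margin on which $\bar\omega$ and $\bar\omega^t$ still agree beyond $\bar\tau_n(\bar\omega)$, legitimising $\bar g_n(\bar\omega^t)=\bar g_n(\bar\omega)$. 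This is entirely analogous to the admissibility step in the proof of \cite[Theorem~2.1]{bartl2017duality}, and I would transcribe that bookkeeping into the present notation.
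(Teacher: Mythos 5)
Your proposal is correct and is essentially the paper's own argument: the proof given in the paper is just a pointer to \cite[Lemma~4.6]{bartl2017duality}, whose content is precisely your combination of the stability of $\bar{\Xi}$ under stopping with the Galmarino-type identities $\bar{\tau}_n(\bar{\omega}(\cdot\wedge t))\wedge t=\bar{\tau}_n(\bar{\omega})\wedge t$ and $\bar{f}_n(\bar{\omega}(\cdot\wedge t))=\bar{f}_n(\bar{\omega})$ on $\{\bar{\tau}_n(\bar{\omega})<t\}$, with the strict inequality supplying the margin exactly as you say. One point worth recording: since $\bar{\mathcal{F}}^{\bar{\Delta}}_s$ contains all subsets of $\bar{\Omega}\setminus\bar{\Delta}$, the Galmarino step is only valid for paths in $\bar{\Delta}$, so the lemma should be read (and is in fact only used, see the proof of Theorem~\ref{thm:dual}) on $\bar{\Delta}\cap\bar{\Xi}$, which additionally requires the easily verified stability of $\bar{\Delta}$ under stopping.
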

\begin{proof}
	The proof is similar to \cite[Lemma 4.6]{bartl2017duality}.
\end{proof}

We are now ready for the proof of the duality theorem. 

\begin{proof}[\text{Proof of Theorem \ref{thm:dual}}] 
	We first approximate the functional $\bar{\mathcal{E}}$.
	To that end, let $\bar{\Xi}_n$ be the sets of Lemma \ref{lem:choose.compacts} and for $\bar{X}\colon\bar{\Omega}\to [-\infty,+\infty]$ define 
	\[\bar{\mathcal{E}}_n(\bar{X}):=
	\inf\left\{ \lambda\in\mathbb{R}:
	\begin{array}{l}
	\text{there are finite simple integrands $\bar{H}$, $\bar{G}$ and $c\geq 0$ such that}\\ 
	\text{$\lambda+(\bar{H}\cdot \bar{S})_t+(\bar{G}\cdot \bar{\mathbb{S}})_t\geq -c$ on
		$\bar{\Delta}\cap\bar{\Xi}_n$ for all $t,n$, and} \\
	\text{$\lambda+ (\bar{G}\cdot\bar{S})_T +(\bar{G}\cdot \bar{\mathbb{S}})_T \geq \bar{X}$ 
		on $\bar{\Delta}\cap\bar{\Xi}_n$}.
	\end{array}\!\right\}. \] 
	As $\bar{\Xi}_n$ is compact, one can verify that $\bar{\mathcal{E}}_n$ is sufficiently regular
	so that for every upper semicontinuous and bounded function $X\colon \bar{\Omega}\to\mathbb{R}$ we have
	\begin{equation}\label{rep:n}
	\bar{\mathcal{E}}_n(\bar{X})=\sup_{\bar{P}\in\bar{\mathcal{M}}(\bar{\Xi}_n)} E_{\bar{P}}[\bar{X}].
	\end{equation}
	The precise argumentation is given in the steps (a)-(c) of \cite[Theorem 2.2]{bartl2017pathwise}.
	
	Next, let $\bar{X}_n\colon \bar{\Omega}\to[0,+\infty)$, $n \in \mathbb{N}$, be bounded upper semicontinuous functions 
	and $\bar{X}:=\sup_n \bar{X}_n$.  
	Then it holds 
	$\bar{\mathcal{E}}(\bar{X})=\sup_{\bar{P}\in\bar{\mathcal{M}}(\bar{\Xi})} E_{\bar{P}}[\bar{X}]$.
	Indeed, by the weak duality in Lemma \ref{lem:weak.duality},
		using that $\bar{\mathcal{M}}(\bar{\Xi})\supset \bar{\mathcal{M}}(\bar{\Xi}_n)$,
	$\bar{X}\geq \bar{X}_n$ for all $n$, and the representation \eqref{rep:n}
	 one has 
	\begin{align}
	\label{eq:calE.geq.supP.geq.supcalEn}
	\bar{\mathcal{E}}(\bar{X})
	\geq \sup_{\bar{P}\in\bar{\mathcal{M}}(\bar{\Xi})} E_{\bar{P}}[\bar{X}]
	\geq \sup_n \sup_{\bar{P}\in\bar{\mathcal{M}}(\bar{\Xi}_n)} E_{\bar{P}}[\bar{X}_n]
	=\sup_n\bar{\mathcal{E}}_n(\bar{X}_n) .
	\end{align}
	This shows $\bar{\mathcal{E}}(\bar{X})\geq \sup_n\bar{\mathcal{E}}_n(\bar{X}_n)$.
	To prove the reverse inequality, 
	which then implies that all inequalities in \eqref{eq:calE.geq.supP.geq.supcalEn} are actually equalities, one may assume without loss of generality that 
	$m:=\sup_n\bar{\mathcal{E}}_n(\bar{X}_n)<+\infty$.
	Given some fixed $\varepsilon>0$, for every $n$ 
	there exist by definition finite simple integrands $\bar{G}^n$ and $\bar{H}^n$ such that
	\begin{align}
	\label{eq:strategy.for.mathcalEn} 
	m+\varepsilon/2 + (\bar{G}^n\cdot\bar{S})_T+ (\bar{H}^n\cdot\bar{\mathbb{S}})_T\geq \bar{X}_n 
	\quad\text{on } \bar{\Delta}\cap\bar{\Xi}_n.
	\end{align}
	Now define the stopping times
	\[ \bar{\sigma}_n:=\inf\{t\in[0,T] :  m+\varepsilon/2 + (\bar{G}^n\cdot \bar{S})_t+ (\bar{H}^n\cdot\bar{\mathbb{S}})_t=-\varepsilon/2  \}\wedge T. \]  
	Then $(\bar{G}^n\cdot \bar{S})_{\bar{\sigma}_n}=(\tilde{G}^n\cdot \bar{S})_T$
	for the simple integrand $\tilde{G}^n:=\bar{G}^n1_{[0,\bar{\sigma}_n]}$
	and similarly $(\bar{H}^n\cdot \bar{\mathbb{S}})_{\bar{\sigma}_n}=(\tilde{H}^n\cdot \bar{\mathbb{S}})_T$ 
	for $\tilde{H}^n:=\bar{H}^n 1_{[0,\bar{\sigma}_n]}$.
	Since $\bar{X}_n\geq 0$, it follows from \eqref{eq:strategy.for.mathcalEn} and
	Lemma \ref{lem:integral.over.time.geq0} that 
	$\bar{\sigma}_n=T$ on $ \bar{\Delta}\cap\bar{\Xi}_n$. 
	The assumptions that $\bar{\Xi}=\bigcup_n\bar{\Xi}_n$ and $\bar{\Xi}_n\subset\bar{\Xi}_{n+1}$
	therefore imply 
	\begin{align*} 
	&m+\varepsilon + (\tilde{G}^n\cdot\bar{S})_t+ (\tilde{H}^n\cdot\bar{\mathbb{S}})_t\geq 0 
	\quad\text{on }\bar{\Omega} \ \mbox{ for all $t$ and $n$,}\\
	&m+\varepsilon + \liminf_n\Big( (\tilde{G}^n\cdot\bar{S})_T+ (\tilde{H}^n\cdot\bar{\mathbb{S}})_T\Big) \geq
	\liminf_n \bar{X}_n=\bar{X} \quad\text{on } \bar{\Delta}\cap\bar{\Xi},
	\end{align*}
	which shows that $\bar{\mathcal{E}}(\bar{X})\leq m+\varepsilon$. As $\varepsilon$ was arbitrary,
	the desired inequality follows.
	
	The proof of the theorem is now readily completed using the transfer principle stated in Lemma \ref{lem:transfer}.
\end{proof}

\subsection{Proof of Theorem~\ref{thm:integral.Xi.has.assumptions}}
\label{subsec:proof-integral-ass}

Throughout this subsection we consider the prediction set $\Xi\equiv\Xi_c$ defined in \eqref{Xi-c},
i.e.~every $\omega\in\Xi_c$ is H\"older continuous with $d\langle \omega\rangle/dt\leq c$. 

\begin{proposition}
\label{lem:BDG.Xi.has.assumptions}
	Let $F\in \mathcal{H}_{s,c}(H,K)$. 
	Then,  one has
	\[ \mathcal{E}\Big( \sup_{t\in [0,T]} \|(F \cdot S)_t\|_K^2 \Big)
	\leq 4 c \int_0^T  \mathcal{E}\big( \|F_t\|_{L(H,K)}^2\big) dt.\]
\end{proposition}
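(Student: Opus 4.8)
The plan is to exploit the duality on $\Xi_c$ provided by Theorem~\ref{thm:dual-Xi-c} in order to reduce this pathwise estimate to a classical martingale estimate under each $P \in \mathcal{M}(\Xi_c)$, and then to return to $\mathcal{E}$ by weak duality together with Tonelli's theorem. This is in contrast to Proposition~\ref{prop:bdg.calE}, which was proved purely pathwise via Lemma~\ref{lem:bgd.path}; here the time-integrated structure of the right-hand side makes the probabilistic route natural.

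First I would check that the functional inside $\mathcal{E}$ is admissible for the duality. Since $F=\sum_n f_n 1_{(\tau_n,\tau_{n+1}]}\in\mathcal{H}_{s,c}(H,K)$ has \emph{deterministic} times $\tau_n$ and \emph{continuous} coefficients $f_n$, for each fixed $t$ the map $\omega\mapsto (F\cdot S)_t(\omega)=\sum_n f_n(\omega)\big(\omega(\tau_{n+1}\wedge t)-\omega(\tau_n\wedge t)\big)$ is a finite sum (the partition being deterministic) of compositions of continuous maps, hence continuous in the supremum norm, and so is $t\mapsto (F\cdot S)_t(\omega)$. Therefore $\omega\mapsto\sup_{t\in[0,T]}\|(F\cdot S)_t(\omega)\|_K^2=\sup_{t\in[0,T]\cap\mathbb{Q}}\|(F\cdot S)_t(\omega)\|_K^2$ is lower semicontinuous, so Theorem~\ref{thm:dual-Xi-c} applies and yields $\mathcal{E}\big(\sup_t\|(F\cdot S)_t\|_K^2\big)=\sup_{P\in\mathcal{M}(\Xi_c)} E_P\big[\sup_t\|(F\cdot S)_t\|_K^2\big]$. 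I may assume the right-hand side of the claimed inequality is finite, as otherwise there is nothing to prove.

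Next, fix $P\in\mathcal{M}(\Xi_c)$. As in the proof of Lemma~\ref{lem:weak.duality}, after localization $(F\cdot S)$ is a continuous $P$-martingale coinciding with the classical integral, so $\|(F\cdot S)_\cdot\|_K$ is a nonnegative continuous submartingale and Doob's $L^2$-maximal inequality gives $E_P\big[\sup_t\|(F\cdot S)_t\|_K^2\big]\le 4\,E_P\big[\|(F\cdot S)_T\|_K^2\big]$, producing the factor $4$. To bound the terminal term I would take $E_P$ in the pointwise inequality of Lemma~\ref{lem:integrals.squared}: the integrals against $S$ and against $\mathbb{S}$ are $P$-(super)martingales started at $0$ and hence have nonpositive expectation, so $E_P\big[\|(F\cdot S)_T\|_K^2\big]\le E_P\big[(\|F\|_{L(H,K)}^2\cdot\langle S\rangle)_T\big]$. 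Finally, using $d\langle S(\omega)\rangle_t\le c\,dt$ on $\Xi_c$, Tonelli's theorem, and the weak duality $E_P[\|F_t\|_{L(H,K)}^2]\le\mathcal{E}(\|F_t\|_{L(H,K)}^2)$ of Lemma~\ref{lem:weak.duality}, I obtain
\[ E_P\Big[\sup_t\|(F\cdot S)_t\|_K^2\Big]\le 4c\int_0^T E_P\big[\|F_t\|_{L(H,K)}^2\big]\,dt\le 4c\int_0^T \mathcal{E}\big(\|F_t\|_{L(H,K)}^2\big)\,dt. \]
As the right-hand side does not depend on $P$, taking the supremum over $P\in\mathcal{M}(\Xi_c)$ and invoking the duality from the first step closes the argument.

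The manipulations with Tonelli and weak duality are routine. The step I expect to require the most care is the reduction to classical martingale theory: verifying that $\sup_t\|(F\cdot S)_t\|_K^2$ genuinely falls into the class of functions for which Theorem~\ref{thm:dual-Xi-c} gives an \emph{equality}—this is exactly where the structural restrictions defining $\mathcal{H}_{s,c}(H,K)$ (deterministic partition, continuous $f_n$) are used—together with the integrability and localization needed to guarantee that $(F\cdot S)$ is a true $P$-martingale with finite second moment, so that both Doob's inequality and the expectation version of Lemma~\ref{lem:integrals.squared} are legitimately applicable. I note that this route, unlike the pathwise argument of Proposition~\ref{prop:bdg.calE}, does not require $K$ to be finite dimensional, since Doob's inequality is applied to the real submartingale $\|(F\cdot S)_\cdot\|_K$.
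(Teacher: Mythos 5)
Your proposal is correct and takes essentially the same route as the paper's own proof: lower semicontinuity of $\sup_{t}\|(F\cdot S)_t\|_K^2$ to invoke the duality of Theorem~\ref{thm:dual} (equivalently Theorem~\ref{thm:dual-Xi-c}), Doob's $L^2$-maximal inequality for the submartingale $\|(F\cdot S)\|_K$ under each $P\in\mathcal{M}(\Xi_c)$, the weak It\^o isometry obtained by integrating the pathwise inequality of Lemma~\ref{lem:integrals.squared} against $P$, and finally $d\langle S\rangle\le c\,dt$ together with weak duality (Lemma~\ref{lem:weak.duality}). The additional care you devote to the localization step and to verifying membership in the duality class matches exactly the details the paper treats in passing.
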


\begin{proof}
	First, since $\Omega\ni \omega \mapsto \sup_{t\in[0,T]} \|(F \cdot S)_t(\omega)\|_K^2$ is lower semicontinuous, the function
	\[C([0,T],H)\ni \omega \mapsto
	\sup_{\delta>0}\inf_{\substack{\tilde \omega\in\Omega\\ \|\tilde\omega-\omega\|_H\le\delta}} 
  \sup_{t\in[0,T]} \|(F \cdot S)_t(\tilde\omega)\|_K^2 \]
	defines its lower semicontinuous extension on $C([0,T],H)$. Moreover, since by assumption $\Xi\equiv\Xi_c$,
	  the conditions of Theorem \ref{thm:dual} are satisfied.
	Therefore, we can apply Theorem \ref{thm:dual} to obtain that
	\[ \mathcal{E}\Big( \sup_{t\in[0,T]} \|(F \cdot S)_t\|_K^2 \Big)
	=\sup_{P\in\mathcal{M}(\Xi)} E_P\Big[ \sup_{t\in[0,T]} \|(F \cdot S)_t\|_K^2 \Big]. \]
	Now, under every $P\in\mathcal{M}(\Xi)$, 
		the process $\|(F \cdot S)\|_K$ is a (real-valued) submartingale.
	Therefore, Doob's maximal inequality implies
	\[ E_P\Big[ \sup_{t\in[0,T]} \|(F \cdot S)_t\|_K^2 \Big]  
	\leq 4 E_P[  \|(F \cdot S)_T\|_K^2]
	\leq 4 E_P[ (\|F\|_{L(H,K)}^2 \cdot \langle S\rangle )_T]\]
	where the last inequality is the weak It\^o-Isometry 
	(apply e.g.~the same arguments as in the proof of Lemma \ref{lem:integrals.squared}
	and integrate w.r.t.~$P$). 
	Finally, by assumption and Lemma \ref{lem:weak.duality} one has
	\[E_P[ (\|F\|_{L(H,K)}^2 \cdot \langle S\rangle )_T]
	\leq c \int_0^T E_P[\|F_t\|_{L(H,K)}^2]\, dt
	\leq c \int_0^T \mathcal{E}(\|F_t\|_{L(H,K)}^2)\, dt \]
	which proves the claim.
\end{proof}

\begin{proof}[\textbf{Proof of Theorem \ref{thm:integral.Xi.has.assumptions}}]
	The part of Theorem \ref{thm:integral} which states that the stochastic integral exists for integrands in $\mathcal{H}^2(H,K)$ follows from
	Proposition \ref{lem:BDG.Xi.has.assumptions} using the exact same arguments as in the proof of Theorem \ref{thm:integral}.
	The second part is shown in Lemma~\ref{le:f-Lip-nice} below.
\end{proof}
{
\begin{proof}[\textbf{Proof of Remark~\ref{rem:integrand-2}}]
 Define $\tau_n:=(n/N)\wedge T$ for every $n\geq 0$ and then set $G:=\sum_{n=0}^\infty F_{\tau_n}1_{(\tau_n,\tau_{n+1}]}\in\mathcal{H}_{s,c}(H,K)$.
As both $F_t$ and $G_t$ are continuous for every $t\in[0,T]$, it follow from Theorem \ref{thm:dual} that 
\begin{align*}
\mathcal{E}(\|F_t-G_t\|_{L(H,K)}^2)
&= \sup_{P\in\mathcal{M}(\Xi)} E_P[\|F_t-G_t\|_{L(H,K)}^2]\\
&\leq \rho(1/N)^2 \Big( 1+ \sup_{P\in\mathcal{M}(\Xi)} E_P\Big[\sup_{r\in[0,T]}\langle S_r\rangle_T^{p/2}\Big]\Big)\\
&\leq  \rho(1/N)^2 (1+(Tc)^{p/2}).
\end{align*}
Indeed, the first inequality follows from the modulus of continuity assumption on $F$ together with the classical BDG-inequality applied under each $P$, and the second inequality by assumption that $d\langle S\rangle_t\leq c\,dt$ on $\Xi_c$.
Integrating over $t$ and taking the limit for $N\to\infty$ yields the claim.
\end{proof}
}
In the next Lemma, we extend the inequality obtained in Proposition~\ref{lem:BDG.Xi.has.assumptions} to integrands lying in $\mathcal{H}^2(H,K)$. Moreover, we prove for Lipschitz continuous functions $f\colon [0,T]\times K \mapsto L(H,K)$, that $f(\cdot,(F\cdot S))\in\mathcal{H}^2(H,K)$ whenever $F \in \mathcal{H}^2(H,K)$. This is the crucial property allowing to solve stochastic differential equations for typical paths. 

\begin{lemma}
	\label{le:f-Lip-nice}
Let $F\in\mathcal{H}^2(H,K)$. Then, one has
\[ \mathcal{E}\Big( \sup_{t\in[0,T]} \|(F \cdot S)_t\|_K^2 \Big)
	\leq 4c \int_0^T  \mathcal{E}\big( \|F_t\|_{L(H,K)}^2\big)\, dt.\]
In addition, if $f\colon [0,T]\times K \mapsto L(H,K)$ is Lipschitz continuous 
then the map
$\Omega \times [0,T] \ni (\omega,t) \mapsto f(t,(F\cdot S)_t(\omega)) \in L(H,K)$ is an element of $\mathcal{H}^2(H,K)$.
\end{lemma}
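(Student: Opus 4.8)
The plan is to establish the two assertions separately: the weak BDG inequality on $\mathcal{H}^2(H,K)$ by completing the estimate of Proposition~\ref{lem:BDG.Xi.has.assumptions}, and the stability statement $f(\cdot,(F\cdot S))\in\mathcal{H}^2(H,K)$ by first reducing to simple integrands and then invoking the duality of Theorem~\ref{thm:dual}. For the inequality, I would fix $F\in\mathcal{H}^2(H,K)$ and a sequence $(F^n)$ in $\mathcal{H}_{s,c}(H,K)$ with $\|F-F^n\|_{\mathcal{H}^2(H,K)}\to0$. Since $(F\cdot S)$ is by construction the limit of $(F^n\cdot S)$ in the complete seminorm $\|X\|:=\mathcal{E}(\sup_{t}\|X_t\|_K^2)^{1/2}$ of Proposition~\ref{prop:L2-Banach-Banach} on $B=C([0,T],K)$, the triangle inequality gives
\[\mathcal{E}\big(\sup_t\|(F\cdot S)_t\|_K^2\big)^{1/2}\le\|(F\cdot S)-(F^n\cdot S)\|+\|(F^n\cdot S)\|,\]
where the first term tends to $0$ and Proposition~\ref{lem:BDG.Xi.has.assumptions} bounds the second by $(4c)^{1/2}\|F^n\|_{\mathcal{H}^2(H,K)}\to(4c)^{1/2}\|F\|_{\mathcal{H}^2(H,K)}$; squaring yields the first claim.

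For the second assertion I would first reduce to the case $F\in\mathcal{H}_{s,c}(H,K)$. Granting that case, for general $F\in\mathcal{H}^2(H,K)$ with approximants $F^n\in\mathcal{H}_{s,c}(H,K)$ one has $f(\cdot,(F^n\cdot S))\in\mathcal{H}^2(H,K)$, and the Lipschitz bound in $k$, linearity of the integral, and the just-proved BDG inequality give
\[\int_0^T\mathcal{E}\big(\|f(t,(F\cdot S)_t)-f(t,(F^n\cdot S)_t)\|_{L(H,K)}^2\big)\,dt\le L^2\!\int_0^T\mathcal{E}\big(\|((F-F^n)\cdot S)_t\|_K^2\big)\,dt\le 4cTL^2\|F-F^n\|_{\mathcal{H}^2(H,K)}^2\to0.\]
Since the seminorm $\|\cdot\|_{\mathcal{H}^2(H,K)}$ is complete (Proposition~\ref{prop:L2-Banach-Banach}) and $\mathcal{H}^2(H,K)$ is by definition closed under it, the limit $f(\cdot,(F\cdot S))$ then lies in $\mathcal{H}^2(H,K)$.

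It remains to treat $F\in\mathcal{H}_{s,c}(H,K)$, where $X:=(F\cdot S)$ is genuinely jointly continuous in $(\omega,t)$, hence so is $G_t(\omega):=f(t,X_t(\omega))$. I would discretise time by $\tau^N_k:=(k/N)\wedge T$ and set $G^N:=\sum_k f(\tau^N_k,X_{\tau^N_k})\,1_{(\tau^N_k,\tau^N_{k+1}]}$, which lies in $\mathcal{H}_{s,c}(H,K)$ since each $X_{\tau^N_k}$ is continuous in $\omega$. Writing $k(t)$ for the index with $t\in(\tau^N_{k(t)},\tau^N_{k(t)+1}]$, the Lipschitz property gives $\|G_t-G^N_t\|_{L(H,K)}^2\le2L^2N^{-2}+2L^2\|X_t-X_{\tau^N_{k(t)}}\|_K^2$. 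The decisive step is to estimate $\mathcal{E}(\|X_t-X_{\tau^N_{k(t)}}\|_K^2)=\mathcal{E}(\|(F\,1_{(\tau^N_{k(t)},t]}\cdot S)_t\|_K^2)$ through Theorem~\ref{thm:dual}: passing to $\sup_{P\in\mathcal{M}(\Xi_c)}E_P[\cdot]$, applying the classical It\^o isometry under each $P$ together with $d\langle S\rangle\le c\,dt$, and using Tonelli and Lemma~\ref{lem:weak.duality} to move the supremum inside the time integral, one gets $\mathcal{E}(\|X_t-X_{\tau^N_{k(t)}}\|_K^2)\le c\int_{\tau^N_{k(t)}}^t\mathcal{E}(\|F_u\|_{L(H,K)}^2)\,du$. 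Integrating in $t$ and using the occupation bound $\int_0^T 1_{(\tau^N_{k(t)},t]}(u)\,dt\le N^{-1}$ yields $\|G-G^N\|_{\mathcal{H}^2(H,K)}^2\le2L^2TN^{-2}+2cL^2N^{-1}\|F\|_{\mathcal{H}^2(H,K)}^2\to0$, which closes the reduced case.

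The main obstacle is precisely this last estimate, and the reason the probabilistic duality is unavoidable. The pathwise weak It\^o isometry of Lemma~\ref{lem:integrals.squared} would only bound $\mathcal{E}(\|(F1_{(\tau^N_{k(t)},t]}\cdot S)_t\|_K^2)$ by $\sup_{\omega\in\Xi_c}\int_{\tau^N_{k(t)}}^t\|F_u\|_{L(H,K)}^2\,d\langle S\rangle_u$, i.e.\ by an $\mathcal{H}^\infty$-type quantity whose $\sup_\omega$ does \emph{not} become small upon integrating in $t$; this is exactly the phenomenon flagged in Remark~\ref{rem:explanationPart1}. Routing the estimate instead through the duality $\mathcal{E}(\cdot)=\sup_{P\in\mathcal{M}(\Xi_c)}E_P[\cdot]$ (available only because $\Xi\equiv\Xi_c$) is what converts the too-strong $\sup_\omega$ into the integrated, and hence controllable, $\mathcal{H}^2$-norm, and is therefore the crux of the whole argument.
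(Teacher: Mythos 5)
Your proof is correct and follows essentially the same route as the paper's: the first claim via Proposition~\ref{lem:BDG.Xi.has.assumptions} and the triangle inequality, and the second via time discretisation of $f(\cdot,(F\cdot S))$ for $F\in\mathcal{H}_{s,c}(H,K)$ combined with the duality of Theorem~\ref{thm:dual}, the weak It\^o isometry under each $P\in\mathcal{M}(\Xi_c)$, and the weak duality of Lemma~\ref{lem:weak.duality}, followed by the same Lipschitz approximation argument for general $F$. The only cosmetic difference is that you close the simple-integrand step with an explicit Tonelli/occupation-time bound of order $N^{-1}$, where the paper instead invokes dominated convergence in $t$; both are equally valid.
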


\begin{proof}
	The first part follows from Proposition \ref{lem:BDG.Xi.has.assumptions} and the triangle inequality.

	It remains to prove that $f(\cdot,(F\cdot S))\in \mathcal{H}^2(H,K)$. Assume first that $F\in\mathcal{H}_{s,c}(H,K)$ and define 
	\[H^n:=\sum_{i=0}^{n-1} f(iT/n,(F\cdot S)_{iT/n}) 1_{(iT/n,(i+1)T/n]}\]
	for every $n$.
	Since $\omega\mapsto (F\cdot S)_{iT/n}(\omega)$ is continuous for ever $i$,
	one has $H^n\in\mathcal{H}_{s,c}(H,K)$.
	Moreover, with $\pi_n(t):=\max\{ iT/n :i\in\mathbb{N}\mbox{ such that }iT/n\le t\}$ and 
    $L_f$ being the Lipschitz constant of $f$, one has
	\begin{align}
	\|f(\cdot,(F\cdot S))-H^n\|_{\mathcal{H}^2(H,K)}^2
	&\leq \int_0^T2L_f^2 \mathcal{E}\Big( |t-\pi_n(t)|^2+ \|(F\cdot S)_t-(F\cdot S)_{\pi_n(t)}\|_K^2\Big) \,dt \nonumber\\
	&\leq 2L_f^2\int_0^T |t-\pi_n(t)|^2 + \mathcal{E}\big(\| (F\cdot S)_t-(F\cdot S)_{\pi_n(t)}\|_K^2\big) \,dt. \label{eq:rem:explanation1}
	\end{align}
	Now Theorem \ref{thm:dual}, the weak It\^o-Isometry (argue as in Proposition \ref{lem:BDG.Xi.has.assumptions}), 
	and weak duality (Lemma \ref{lem:weak.duality}) imply for every $t$ that
	\begin{align}
	&\mathcal{E}\big(\|(F\cdot S)_t-(F\cdot S)_{\pi_n(t)}\|_K^2\big)
	=\sup_{P\in\mathcal{M}(\Xi)}E_P\big[\|(F\cdot S)_t-(F\cdot S)_{\pi_n(t)}\|_K^2\big] \nonumber\\
	&\leq \sup_{P\in\mathcal{M}(\Xi)}E_P\Big[\int_{\pi_n(t)}^t\|F_s\|_{L(H,K)}^2\, d\langle S\rangle_s \Big]
	\leq c \int_{\pi_n(t)}^t \mathcal{E}\big( \|F_s\|_{L(H,K)}^2\big)\, ds \label{eq:rem:explanation2}\\
	&\leq  c \int_{0}^T \mathcal{E}\big( \|F_s\|_{L(H,K)}^2\big)\, ds <+\infty.	 \nonumber
	\end{align}
This shows that $\mathcal{E}\big(\|(F\cdot S)_t-(F\cdot S)_{\pi_n(t)}\|_K^2\big)$ is dominated by $c \int_{0}^T \mathcal{E}\big( \|F_s\|_{L(H,K)}^2\big)\, ds <+\infty$ and 
converges pointwise to $0$ when $n$ goes to infinity since then $\pi_n(t)\to t$. Therefore, dominated convergence implies $\|f(\cdot,(F\cdot S))-H^n\|_{\mathcal{H}^2(H,K)}\to 0$, which shows that $f(\cdot,(F\cdot S))\in\mathcal{H}^2(H,K)$.

The general case follows by approximating $F\in\mathcal{H}^2(H,K)$ by $F^n\in\mathcal{H}_{s,c}(H,K)$, and using the inequality
\begin{align*}
\|f(\cdot,(F^n\cdot S))-f(\cdot,(F\cdot S))\|^2_{\mathcal{H}^2(H,K)}&\le L^2_f \int_0^T \mathcal{E}\!\big(\|(F^n\cdot S)_t-(F\cdot S)_t\|^2_K\big)\,dt \nonumber\\
&\le T L^2_f \mathcal{E}\Big(\sup_{0\le t\le T}\|(F^n\cdot S)_t-(F\cdot S)_t\|^2_K\Big) \\
&\le  4c T L^2_f \|F^n-F\|^2_{\mathcal{H}^2(H,K)}, \nonumber
\end{align*}
where the last inequality is ensured by the first part. 
\end{proof}

\begin{remark}\label{rem:identification}
Let $F\in\mathcal{H}^2(H,K)$. Similar to the proof of Lemma \ref{le:f-Lip-nice} one can verify 
\begin{enumerate}
\item $f(\cdot, F)\in\mathcal{H}^2(H,K)$ for every function $f:[0,T]\times K\to L(H,K)$ which is Lipschitz continuous, and
\item $(F\cdot S)$ can be identified with an element in $\mathcal{H}^2(K,\mathbb{R})$, by considering $i(F\cdot S)$ for the isometric isomorphism $i:K\to L(K,\mathbb{R})$ given by the Riesz representation theorem.
\end{enumerate}
In Subsection \ref{sec:Picard} we will frequently use this identification.
\end{remark}


\subsection{Proof of Theorem \ref{thm:sde}}\label{sec:Picard}
We recall that throughout this subsection we consider the particular prediction set $\Xi_c$ defined in \eqref{Xi-c} and that  Assumption
~\ref{ass:SDE} hold.
\begin{lemma}
\label{lem:intA}
	For $F\in\mathcal{H}^2(\mathbb{R},K)$, the integral 
	\[(F\cdot A)\colon\Omega\to C([0,T],K)\]
	exists and satisfies
	\[ \mathcal{E}\Big(\sup_{t\in[0,T]} \|(F\cdot A)_t\|_K^2 \Big)
	\leq c^2T \int_0^T \mathcal{E}(\|F_t\|^2_{L(\mathbb{R},K)})\, dt.\]
	In particular, $(F\cdot A)\in\mathcal{H}^2(K,\mathbb{R})$ by identifying $K$ with $L(K,\mathbb{R})$.
	Moreover, if $f\colon [0,T]\times K\to L(\mathbb{R},K)$ is
	Lipschitz continuous, then $f(\cdot,(F\cdot A))\in\mathcal{H}^2(\mathbb{R},K)$. 
\end{lemma}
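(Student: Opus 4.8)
The plan is to exploit that, in contrast to $S$, the integrator $A$ has finite variation, so the integral against $A$ can be built purely pathwise and the ``BDG'' bound reduces to an elementary pointwise estimate; neither a maximal inequality nor an It\^o isometry is needed here. First I would define $(F\cdot A)$ for simple integrands $F=\sum_n f_n 1_{(\tau_n,\tau_{n+1}]}\in\mathcal{H}_{s,c}(\mathbb{R},K)$ by the pathwise Lebesgue--Stieltjes prescription $(F\cdot A)_t(\omega)=\sum_n f_n(\omega)\big(A_{\tau_{n+1}\wedge t}(\omega)-A_{\tau_n\wedge t}(\omega)\big)$, which is continuous in $t$ since $A$ is. Because $d|A|(\omega)/dt\le c$ and $\|F_s\|_{L(\mathbb{R},K)}=\|F_s(1)\|_K$, one has the pointwise chain
\[\sup_{t\in[0,T]}\|(F\cdot A)_t\|_K \le \int_0^T\|F_s\|_{L(\mathbb{R},K)}\,d|A|_s \le c\int_0^T\|F_s\|_{L(\mathbb{R},K)}\,ds,\]
and Cauchy--Schwarz on $[0,T]$ then yields, pointwise on $\Omega$,
\[\sup_{t\in[0,T]}\|(F\cdot A)_t\|_K^2 \le c^2T\int_0^T\|F_s\|_{L(\mathbb{R},K)}^2\,ds.\]

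Next I would apply $\mathcal{E}$, using its monotonicity and positive homogeneity, and then interchange $\mathcal{E}$ with the time integral. For $F\in\mathcal{H}_{s,c}$ the stopping times are deterministic, so $\int_0^T\|F_s\|_{L(\mathbb{R},K)}^2\,ds=\sum_n\|f_n\|_{L(\mathbb{R},K)}^2(\tau_{n+1}-\tau_n)$ is a finite sum, and subadditivity together with positive homogeneity of $\mathcal{E}$ (Proposition \ref{prop:expectation.is.sigma.subadd}) give $\mathcal{E}\big(\int_0^T\|F_s\|^2\,ds\big)\le\sum_n\mathcal{E}(\|f_n\|^2)(\tau_{n+1}-\tau_n)=\int_0^T\mathcal{E}(\|F_s\|_{L(\mathbb{R},K)}^2)\,ds$; thus the claimed inequality holds for simple $F$. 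I would then extend to arbitrary $F\in\mathcal{H}^2(\mathbb{R},K)$ by approximating with $F^n\in\mathcal{H}_{s,c}$, using the just-proved inequality to see that $(F^n\cdot A)$ is Cauchy for the seminorm $X\mapsto\mathcal{E}(\sup_t\|X_t\|_K^2)^{1/2}$; completeness of this seminorm (Proposition \ref{prop:L2-Banach-Banach} with $B=C([0,T],K)$) produces the limit $(F\cdot A)\colon\Omega\to C([0,T],K)$, and the inequality passes to the limit by the triangle inequality. The identification $(F\cdot A)\in\mathcal{H}^2(K,\mathbb{R})$ is then obtained through the Riesz isometry $i\colon K\to L(K,\mathbb{R})$ exactly as in Remark \ref{rem:identification}.

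Finally, the closure statement $f(\cdot,(F\cdot A))\in\mathcal{H}^2(\mathbb{R},K)$ for Lipschitz $f$ follows the template of Lemma \ref{le:f-Lip-nice}: approximate by the step processes $H^n=\sum_i f\big(iT/n,(F\cdot A)_{iT/n}\big)1_{(iT/n,(i+1)T/n]}\in\mathcal{H}_{s,c}$, bound $\|f(\cdot,(F\cdot A))-H^n\|_{\mathcal{H}^2}^2$ by $2L_f^2$ times $\int_0^T\big(|t-\pi_n(t)|^2+\mathcal{E}(\|(F\cdot A)_t-(F\cdot A)_{\pi_n(t)}\|_K^2)\big)\,dt$, and control the modulus through the finite-variation estimate $\mathcal{E}(\|(F\cdot A)_t-(F\cdot A)_{\pi_n(t)}\|_K^2)\le c^2(t-\pi_n(t))\int_{\pi_n(t)}^t\mathcal{E}(\|F_s\|_{L(\mathbb{R},K)}^2)\,ds$, which is the analogue of \eqref{eq:rem:explanation2}; dominated convergence then finishes the argument as $\pi_n(t)\to t$. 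The main obstacle I anticipate is bookkeeping rather than conceptual: making the interchange of $\mathcal{E}$ and $\int_0^T\cdot\,dt$ rigorous for general, non-simple integrands and confirming that the approximating step processes genuinely lie in $\mathcal{H}_{s,c}$. Both points are handled by reducing to deterministic-time simple integrands, where subadditivity applies termwise, and the finite variation of $A$ throughout removes the need for the heavier martingale machinery (Doob's inequality, weak It\^o isometry) invoked for the integrator $S$.
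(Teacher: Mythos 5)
Your proposal is correct, but it reaches the key inequality by a genuinely different route than the paper. The paper's proof treats the $A$-integral with the same machinery as the $S$-integral in Proposition~\ref{lem:BDG.Xi.has.assumptions}: for $F,G\in\mathcal{H}_{s,c}(\mathbb{R},K)$ it invokes the duality Theorem~\ref{thm:dual} to rewrite $\mathcal{E}\big(\sup_{t\in[0,T]}\|(F\cdot A)_t-(G\cdot A)_t\|_K^2\big)$ as a supremum of $E_P$ over $P\in\mathcal{M}(\Xi)$, applies H\"older and Fubini under each martingale measure, and closes with the weak duality $E_P\leq\mathcal{E}$ of Lemma~\ref{lem:weak.duality}, delegating the completion and closure steps to the proofs of Theorem~\ref{thm:integral} and Lemma~\ref{le:f-Lip-nice}. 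You instead never leave the pathwise level: the variation bound $d|A|(\omega)/dt\le c$, Cauchy--Schwarz in $dt$, and the observation that for deterministic-time simple integrands $\int_0^T\|F_s\|^2_{L(\mathbb{R},K)}\,ds$ is a finite sum on which sublinearity and positive homogeneity of $\mathcal{E}$ apply termwise, yield the estimate with no duality at all. What your route buys is independence from the hypotheses of Theorem~\ref{thm:dual}: since the bound on $A$ holds on all of $\Omega$ and no semicontinuity check is needed, the first part of the lemma holds verbatim for an arbitrary prediction set $\Xi$, not just $\Xi_c$; what the paper's route buys is uniformity of method, recycling machinery already in place for the $S$-integral. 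Two bookkeeping points you flag yourself are indeed essential and are handled correctly in your plan: the interchange $\mathcal{E}\big(\int_0^T X_t\,dt\big)\le\int_0^T\mathcal{E}(X_t)\,dt$ is justified only for deterministic-time simple integrands, so the inequality for general $F\in\mathcal{H}^2(\mathbb{R},K)$ must be recovered in the limit via the triangle inequality for the two seminorms (as you do), and in the closure step the processes $H^n$ belong to $\mathcal{H}_{s,c}(\mathbb{R},K)$ only when built from simple $F$ (so that $\omega\mapsto (F\cdot A)_{iT/n}(\omega)$ is continuous), with general $F$ then handled by the Lipschitz stability estimate exactly as in the second half of the proof of Lemma~\ref{le:f-Lip-nice}.
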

\begin{proof}
	For $F,G\in\mathcal{H}_{s,c}(\mathbb{R},K)$, Theorem \ref{thm:dual}
	and H\"older's inequality implies
	\begin{align*} 
	&\mathcal{E}\Big(\sup_{t\in[0,T]} \|(F\cdot A)_t-(G\cdot A)_t\|_K^2 \Big)
	\leq \sup_{P\in\mathcal{M}(\Xi)} E_P\Big[ \Big(\int_0^T \|F_t-G_t\|_K c\,dt \Big)^2 \Big]\\
	&\leq c^2T \int_0^T \sup_{P\in\mathcal{M}(\Xi)}E_P[\|F_t-G_t\|_K^2] \, dt
	\leq c^2T \|F-G\|_{\mathcal{H}^2(\mathbb{R},K)}^2.
	\end{align*}
	The rest follows by the same arguments as in the proof of Theorem \ref{thm:integral}.

	The second part can be proved as in Lemma \ref{le:f-Lip-nice}.
\end{proof}

In line with Remark \ref{rem:identification} the following holds.

\begin{remark}
\label{rem2:identification}
  For $F\in\mathcal{H}^2(K,\mathbb{R})$ one has 
  \begin{enumerate}
  \item $f(\cdot, F)\in\mathcal{H}^2(\mathbb{R},K)$ for every function $f:[0,T]\times K\to L(\mathbb{R},K)$ which is Lipschitz continuous, and
  \item $(F\cdot A)$ can be identified with an element in $\mathcal{H}^2(K,\mathbb{R})$.
  \end{enumerate}
\end{remark}

Now we are ready to prove Theorem \ref{thm:sde}. Consider the Picard iteration 
\[X^{n+1}:=x_0+(\mu(\cdot,X^n) \cdot A)+ (\sigma(\cdot,X^n)\cdot S) \]
starting at $X^0\equiv x_0\in K$ and recall the Lipschitz continuity of $\mu$ and $\sigma$ imposed in \eqref{eq:Lipschitz-coeff} with corresponding Lipschitz constant $L$ .

\begin{lemma}
	\label{lem:aprori}
	Assume that $X^n:\Omega \to C([0,T],K)$ and $X^n\in \mathcal{H}^2(K,\mathbb{R})$.
  Then the process $X^{n+1}:\Omega\to C([0,T],K)$ satisfies $X^{n+1}\in\mathcal{H}^2(H,K)$ and 
	\[g^{n+1}(t):=\mathcal{E}\Big(\sup_{s\in[0,t]} \|X_s^{n+1}-X_s^n\|_K^2\Big)
	\leq C \int_0^t g^n(s)\,ds\]
	for all $t\in [0,T]$ where 
  $g^0\equiv \sup_{t\in[0,T]}\|\mu(t,x_0)\|_{L(\mathbb{R},K)}  
  +\sup_{t\in[0,T]}\|\sigma(t,x_0)\|_{L(H,K)}<+\infty$
  and
  $C:=(2c^2 T+8c)L^2$.
\end{lemma}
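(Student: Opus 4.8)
The plan is to mimic the classical Picard a priori estimate, with two structural substitutions dictated by the present framework: the linearity of the expectation is replaced by the countable subadditivity of $\mathcal{E}$ (Proposition~\ref{prop:expectation.is.sigma.subadd}), and the two stochastic integrals are controlled by their respective weak BDG-type inequalities, namely Lemma~\ref{lem:intA} for the $dA$-integral and Lemma~\ref{le:f-Lip-nice} (equivalently Theorem~\ref{thm:integral.Xi.has.assumptions}) for the $dS$-integral. The Lipschitz assumption \eqref{eq:Lipschitz-coeff} on $\mu,\sigma$ then feeds the recursion, and the constant $C=(2c^2T+8c)L^2$ will simply collect the prefactors of these two inequalities.

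First I would check that $X^{n+1}$ is well defined and lands in the correct space. Since $X^n\in\mathcal{H}^2(K,\mathbb{R})$ and $\mu,\sigma$ are Lipschitz, Remark~\ref{rem2:identification} and Remark~\ref{rem:identification} yield $\mu(\cdot,X^n)\in\mathcal{H}^2(\mathbb{R},K)$ and $\sigma(\cdot,X^n)\in\mathcal{H}^2(H,K)$. Then Lemma~\ref{lem:intA} and Theorem~\ref{thm:integral.Xi.has.assumptions} guarantee that $(\mu(\cdot,X^n)\cdot A)$ and $(\sigma(\cdot,X^n)\cdot S)$ exist as maps into $C([0,T],K)$ and, after identifying $K$ with $L(K,\mathbb{R})$, belong to $\mathcal{H}^2(K,\mathbb{R})$. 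As $\mathcal{H}^2(K,\mathbb{R})$ is a linear space containing the constants and sums of continuous maps are continuous, $X^{n+1}\colon\Omega\to C([0,T],K)$ is well defined and lies in $\mathcal{H}^2(K,\mathbb{R})$.

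For the recursive inequality, fix $n\ge 1$ and write
\[
X^{n+1}_s-X^n_s = \big((\mu(\cdot,X^n)-\mu(\cdot,X^{n-1}))\cdot A\big)_s + \big((\sigma(\cdot,X^n)-\sigma(\cdot,X^{n-1}))\cdot S\big)_s .
\]
Using $\|a+b\|_K^2\le 2\|a\|_K^2+2\|b\|_K^2$, taking $\sup_{s\in[0,t]}$ and applying the subadditivity of $\mathcal{E}$, the quantity $g^{n+1}(t)$ splits into a $dA$-part and a $dS$-part. I would bound the $dA$-part by Lemma~\ref{lem:intA} and the $dS$-part by Lemma~\ref{le:f-Lip-nice} (both read on $[0,t]$ rather than $[0,T]$), which turns the two parts into the time integrals of $\mathcal{E}(\|\mu(s,X^n_s)-\mu(s,X^{n-1}_s)\|_{L(\mathbb{R},K)}^2)$ and $\mathcal{E}(\|\sigma(s,X^n_s)-\sigma(s,X^{n-1}_s)\|_{L(H,K)}^2)$. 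The Lipschitz bound \eqref{eq:Lipschitz-coeff} together with monotonicity of $\mathcal{E}$ and the elementary estimate $\mathcal{E}(\|X^n_s-X^{n-1}_s\|_K^2)\le\mathcal{E}(\sup_{r\le s}\|X^n_r-X^{n-1}_r\|_K^2)=g^n(s)$ replaces both integrands by $L^2g^n(s)$. Collecting the prefactors $2\cdot c^2t\le 2c^2T$ and $2\cdot 4c=8c$ yields $g^{n+1}(t)\le(2c^2T+8c)L^2\int_0^t g^n(s)\,ds=C\int_0^t g^n(s)\,ds$, as claimed. The base case $n=0$ is handled by the identical splitting applied to $X^1_s-x_0=(\mu(\cdot,x_0)\cdot A)_s+(\sigma(\cdot,x_0)\cdot S)_s$, where $X^{n-1}$ is replaced by the constant path $x_0$; using $\mathcal{E}(\lambda)\le\lambda$ for constants, everything reduces to the deterministic quantities entering $g^0$, which are finite because the Lipschitz property of $\mu,\sigma$ forces local boundedness, so that $g^0<+\infty$ launches the induction.

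I expect the only genuinely delicate points to be bookkeeping rather than conceptual. One is verifying that the two weak BDG inequalities, stated on $[0,T]$ in Lemma~\ref{lem:intA} and Lemma~\ref{le:f-Lip-nice}, remain valid with $t$ in place of $T$, so that the resulting integrals run over $[0,t]$ only; this localization is exactly what makes the subsequent Gronwall argument (carried out elsewhere) converge. The other is confirming at each step that the integrands $\mu(\cdot,X^n)-\mu(\cdot,X^{n-1})$ and $\sigma(\cdot,X^n)-\sigma(\cdot,X^{n-1})$ are admissible elements of $\mathcal{H}^2(\mathbb{R},K)$ and $\mathcal{H}^2(H,K)$ respectively, which again follows from the Lipschitz-composition statements in Remark~\ref{rem2:identification} and Remark~\ref{rem:identification}. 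Beyond these routine verifications there is no real obstacle, the essential mechanism being the replacement of the expectation's linearity by the subadditivity of $\mathcal{E}$.
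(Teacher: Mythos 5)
Your proposal is correct and follows essentially the same route as the paper's proof: the same splitting of $X^{n+1}-X^n$ into the $(\Delta\mu^n\cdot A)$ and $(\Delta\sigma^n\cdot S)$ parts via $\|a+b\|_K^2\le 2\|a\|_K^2+2\|b\|_K^2$ and subadditivity of $\mathcal{E}$, the same appeals to Lemma~\ref{lem:intA} and Lemma~\ref{le:f-Lip-nice} (with the identifications of Remarks~\ref{rem:identification} and \ref{rem2:identification}) yielding the constant $C=(2c^2T+8c)L^2$, and the same treatment of the base case $n=0$ with $X^0\equiv x_0$. The localization of the two weak BDG inequalities from $[0,T]$ to $[0,t]$, which you rightly flag as the only bookkeeping subtlety, is used implicitly in the paper and is routine (apply the inequalities to the integrand multiplied by $1_{(0,t]}$).
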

\begin{proof}
	By Lemma \ref{le:f-Lip-nice}, Remark \ref{rem:identification}, Lemma \ref{lem:intA}, and Remark \ref{rem2:identification} it holds that $X^{n+1}:\Omega\to C([0,T],K)$ and $X^{n+1}\in\mathcal{H}^2(H,K)$. Define $\Delta\mu^n_t:=\mu(t,X^n_t)-\mu(t,X^{n-1}_t)$,
	$\Delta\sigma^n_t:=\sigma(t,X^n_t)-\sigma(t,X^{n-1}_t)$, and
	$\Delta X^n_t:=X^n_t-X^{n-1}_t$ for all $t\in[0,T]$ and $n\in\mathbb{N}$.
	Since
	\[\sup_{s\in[0,t]} \|\Delta X_s^{n+1}\|_K^2
	\leq 2 \sup_{s\in[0,t]} \|(\Delta\mu^n\cdot A)_s\|_K^2
		+ 2\sup_{s\in[0,t]}\|(\Delta\sigma^n \cdot S)_s\|_K^2,\]
	it follows from Lemma \ref{le:f-Lip-nice} and Lemma \ref{lem:intA} that
	\begin{align*}
	g^{n+1}(t)
	&=\mathcal{E}\Big(\sup_{s\in[0,t]} \|\Delta X_s^{n+1}\|_K^2\Big) \\
	&\leq 2 \mathcal{E}\Big(\sup_{s\in[0,t]} \|(\Delta \mu^n\cdot A)_s \|_K^2\Big) 
		+ 2 \mathcal{E}\Big(\sup_{s\in[0,t]} \|(\Delta \sigma^n\cdot S)_s\|_K^2\Big) \\
	&\leq 2c^2 T\int_0^t \mathcal{E}\Big(\|\Delta\mu^n_s\|^2_{L(\mathbb{R},K)} \Big)\,ds 
		+ 8c \int_0^t \mathcal{E}\Big(\|\Delta\sigma^n_s\|^2_{L(H,K)} \Big)\,ds \\
	&\leq C \int_0^t \mathcal{E}\Big(\|\Delta X^n_s\|^2_{L(K,\mathbb{R})} \Big) \,ds \\
	&\leq C \int_0^t \mathcal{E} \Big(\sup_{u\in[0,s]} \|\Delta X_u^{n}\|_K^2\Big) \,ds 
	= C \int_0^t g^n(s) \,ds.
	\end{align*}
	For $n=0$ the previous computation yields $g^1(t)\leq C\int_0^t g^0(s)\, ds$.
  That $g^0<+\infty$ follows directly from the assumption \eqref{eq:Lipschitz-coeff} on $\mu$ and $\sigma$.
	\end{proof}

\begin{proof}[\textbf{Proof of Theorem \ref{thm:sde}}]
The existence and uniqueness of the SDE follows directly from Lemma \ref{lem:aprori} by standard arguments. Indeed, iterating the estimate $g^{n+1}(t)\le C\int_0^t g^n(s)\, ds$ yields
\[g^n(t)\le g^0(0) \frac{(Ct)^n}{n !}\]
for all $t\in[0,T]$ and $n\in\mathbb{N}$.
For any function $Y\colon\Omega\to C([0,T],K)$ define $\|Y\|:=\mathcal{E}\big(\sup_{t\in[0,T]}\|Y_t\|_K^2\big)^{\frac{1}{2}}$
which by Proposition \ref{prop:L2-Banach-Banach} is a semi-norm. Then, since for every $m,n\in\mathbb{N}$ with $m>n$ one has
\[\|X^m-X^n\|
\leq \sum_{k>n}\| X^k-X^{k-1}\|
\leq \sum_{k>n}\Big(\frac{(CT)^k}{k!}\Big)^{\frac{1}{2}}
\to 0\quad\mbox{as }n\to\infty\]
it follows that $(X^n)$ is a Cauchy sequence w.r.t.~$\|\cdot\|$. By Proposition \ref{prop:L2-Banach-Banach}
there exists a subsequence $(n_k)$ such that $X^{n_k}$ converges to some $X:\Omega\to C([0,T],K)$ for typical paths and $\|X^n-X\|\to 0$. Since
\[\|X^n-X\|^2_{\mathcal{H}^2(K,\mathbb{R})}=\int_0^T \mathcal{E}\big(\|X^n_s-X_s\|^2_{L(K,\mathbb{R})}\big) \,ds
\le T\|X^n-X\|^2\]
it follows that $X\in \mathcal{H}^2(K,\mathbb{R})$, as well as
\[\|\mu(\cdot,X^n)-\mu(\cdot,X)\|_{\mathcal{H}^2(\mathbb{R},K)}\to 0\quad \mbox{and}\quad \|\sigma(\cdot,X^n)-\sigma(\cdot,X)\|_{\mathcal{H}^2(H,K)}\to 0\]
by Lipschitz continuity of $\mu$ and $\sigma$. This shows 
\begin{align*}
X&=\lim_{n\to\infty} X^{n+1} 
=\lim_{n\to\infty}\Big(x_0+(\mu(\cdot,X^n)\cdot A)+(\sigma(\cdot,X^n)\cdot S)\Big)\\
&= x_0+(\mu(\cdot,X)\cdot A)+(\sigma(\cdot,X)\cdot S)
\end{align*}
where the limits are w.r.t.~$\|\cdot\|$.

As for the uniqueness, suppose there exist two solutions $X,Y:\Omega\to C([0,T],K)$ in $\mathcal{H}^2(K,\mathbb{R})$ of the SDE. Similar to the proof of Lemma \ref{lem:aprori} we get
\[g(t):=\mathcal{E}\Big(\sup_{s\in[0,t]} \|X_s-Y_s\|^2_K\Big)\le C\int_0^t g(s)\, ds\]
with $C:=(2c^2 T+8c)L^2$. Iterating this estimate yields $g(t)\le g(T)\frac{(Ct)^k}{k!}$ for all $k\in\mathbb{N}$ and $t\in[0,T]$, so that $\|X-Y\|=g(T)^{\frac{1}{2}}=0$. This shows that $X$ and $Y$ coincide for typical paths. 
\end{proof}
{\subsection*{Acknowledgments}
We would like to thank the anonymous referee for a very thorough reading and exceptionally helpful comments.\\
The first author is supported by the Austrian Science Fund (FWF) under grant P28861.\\
 The third author is supported by the NAP Grant.\\
Part of this research was carried out while the authors were visiting the Shanghai Advanced 
Institute of Finance and the School of Mathematical Sciences at the Shanghai Jiao Tong University 
in China, and we would like to thank Samuel Drapeau for his hospitality.}
\newpage
\bibliographystyle{abbrv}

\end{document}